\let\pa\partial  
\let\eps\varepsilon
\newcommand{\R}{{\mathbb R}} 
\newcommand{\diver}{\operatorname{div}}
\newcommand{\even}{\mathrm{e}}
\newcommand{\odd}{\mathrm{o}}
\newcommand{\T}{\mathsf T}
\newcommand{\dd}{{\mathrm{d}}}
\newcommand{\sym}{\operatorname{sym}}
\newcommand{\V}{\mathcal{V}}
\newcommand{\E}{\mathcal{E}}
\newcommand{\W}{\mathcal{W}}
\DeclareMathOperator*{\esssup}{ess\,sup}
\newcommand{\vro}{\varrho}
\newcommand{\bv}{\boldsymbol v}
\newcommand{\bu}{\boldsymbol u}
\newcommand{\bs}{\boldsymbol}
\newcommand{\bx}{\boldsymbol x}
\newcommand{\by}{\boldsymbol y}
\newcommand{\bz}{\boldsymbol z}
\newcommand{\va}{\mathsf v}
\newcommand{\ua}{\mathsf u}
\newcommand{\paa}{\mathsf p}
\newcommand{\res}{\mathsf r}
\newcommand{\bef}{\boldsymbol f}
\newcommand{\material}{\Omega_{\eps,h}}
\newcommand{\Hper}{H_{\mathrm{per}}}
\theoremstyle{plain}
\newtheorem{theorem}{Theorem}[section]   
\newtheorem{lemma}[theorem]{Lemma}   
\newtheorem{proposition}[theorem]{Proposition}  
\newtheorem{corollary}[theorem]{Corollary}  
\theoremstyle{definition}
\newtheorem{definition}{Definition}[section]
\theoremstyle{remark}
\newtheorem{remark}{Remark}[section]
\begin{document}

\title[Thin linear FSI problem]{Rigorous derivation of a linear sixth-order thin-film equation as a 
reduced model for thin fluid - thin structure interaction problems}
 
\author{Mario Bukal$^1$}
\address[1]{University of Zagreb,
Faculty of Electrical Engineering and Computing\newline
Unska 3, 10000 Zagreb, Croatia}
\email{mario.bukal@fer.hr}
\author{Boris Muha$^2$}
\address[2]{University of Zagreb,
Faculty of Science, Department of Mathematics,
Bijeni\v cka cesta 30, 10000 Zagreb, Croatia}
\email{borism@math.hr}

\thanks{This work has been supported in part by Croatian Science
Foundation under projects UIP-05-2017-7249 (MANDphy) and 3706 (FSIApp). 
Authors are very thankful to 
Igor Vel\v ci\'c (University of Zagreb) for his insightful comments and suggestions
as well as to the reviewer for his valuable remarks which improved the manuscript.}

\keywords{thin viscous fluids, elastic plate, fluid-structure interaction, 
linear sixth-order thin-film equation, error estimates}

\subjclass[2010]{35M30, 35Q30, 35Q74, 76D05, 76D08}

 \begin{abstract}
We analyze a linear 3D/3D fluid-structure interaction problem between a thin layer
of a viscous fluid and a thin elastic plate-like structure with the aim of deriving 
a simplified reduced model.   
Based on suitable energy dissipation inequalities quantified in terms of two  
small parameters, thickness of the 
fluid layer and thickness of the elastic structure, we identify
the right {relation between the system coefficients and small parameters which eventually
provide a reduced model on the vanishing limit.
The reduced model is a linear sixth-order thin-film equation
describing the out-of-plane displacement
of the structure, which is justified in terms of 
weak convergence results relating its solution to the solutions of the original fluid-structure 
interaction problem.
Furthermore, approximate solutions to the fluid-structure 
interaction problem are reconstructed from the reduced model and quantitative error estimates  
are obtained, which provide even strong convergence results.} 
\end{abstract} 

\date{\today}
\maketitle

\section{Introduction}
Physical models involving fluids lubricating underneath elastic 
structures are common phenomena in nature, with ever-increasing application areas in technology.
In nature, such examples range from geophysics, like the growth of magma intrusions \cite{LPN13, Mic11}, 
the fluid-driven opening of fractures in the Earth's crust \cite{BuDe05, HBB13}, and subglacial floods \cite{DJBH08,TsRi12}, 
to biology, for instance
the passage of air flow in the lungs \cite{HHS08}, and the operation of vocal cords \cite{Tit94}.
They have also become an inevitable mechanism in industry, for example 
in manufacturing of silicon wafers \cite{HuSo02, King89} and 
suppression of viscous fingering \cite{PIHJ12, PJH14}. 
In the last two decades we witness an emergence of a huge area of 
microfluidics \cite{LBS05,HoMa04,TaVe12}
with particular applications to so called lab-on-a-chip technologies \cite{SSA04, DaFin06}, 
which revolutionized experimentations in biochemistry and biomedicine.
All those examples belong to a wider class of physical models, 
called the fluid--structure interaction 
(FSI) systems, which have recently gained a lot of attention in the applied mathematics community 
due to their important and increasing applications in medicine \cite{BGN14,BCMG16},   
aero-elasticity \cite{Bol63, CDLW16, Dow15}, marine engineering \cite{YGJ17}, etc.  

Mathematical models describing the above listed examples are coupled systems of partial differential 
equations, where fluids are typically described by the Stokes or Navier-Stokes equations, 
while structures are either described by the linear elasticity equations or by some 
lower-dimensional model, if the structure is relatively thin and has a plate-like geometry. 
If fluids are also considered to be relatively thin like in our case,
the lubrication approximation is formally employed giving rise to the Reynolds equation 
for the pressure (see e.g.~\cite{BayCha86,NazPil90}).
Coupling the Reynolds equation with the structure equation yields, after appropriate
time scaling, a reduced model given in terms of a higher-order (fourth or sixth) 
evolution equation.
Such models are common in engineering literature \cite{HoMa04,TaVe12,HBB13,LPN13} and favorable for
solving and analyzing. 
They are typically derived based on some physical assumptions, heuristic arguments, and asymptotic
expansion techniques. Despite numerous applications and abundance of the literature 
on reduced FSI models, they often lack rigorous mathematical derivation in the sense that there 
is no convergence of solutions (not even in a weak sense) of the original problem to 
solutions of the reduced problem, i.e.~the literature on the topic of rigorous derivation of 
reduced models, which we outline below, is very scarce. 

%\subsection{Literature review}
In the last twenty years there has been a lot of progress in well-posedness 
theory for the FSI problems 
(see e.g.~\cite{ALT10,BGN14,CDEM,Chu14,CSS2,DGHL03,KKLTTW18,SunBorMulti} and references within). 
Starting from various  FSI problems, \v Cani\'c, Mikeli\'c and 
others \cite{CanMik03,MikGuCan07,TamCanMik05} studied the flow through a 
long elastic axially symmetric channel and using asymptotic expansion techniques obtained several 
reduced models of Biot-type. In \cite{CanMik03} they provided a rigorous 
justification of the reduced model through a weak convergence result and the corresponding 
error estimates. In \cite{PaSt06} Panasenko and Stavre analyzed a periodic flow in thin 
channel with visco-elastic walls. The problem was initially described by a 
linear 2D (fluid)/1D (structure) FSI model, and under a special ratio of the channel 
height and the rigidity of wall a linear sixth-order thin-film equation describing the wall 
displacement emanated as the reduced model. A similar problem has been also considered in \cite{CuMP18}, 
resulting again in the reduced model described by another linear sixth-order equation. 
In both papers, reduced models have been rigorously justified by the appropriate convergence results. 
In \cite{PaSt14} Panasenko and Stavre analyzed a linear 2D/2D FSI model and using the asymptotic expansion
techniques justified the simplified 2D/1D FSI model, which was the starting point in \cite{PaSt06}.  
The study from \cite{PaSt14} has been recently generalized in \cite{PaSt20A}, where depending on different
scalings of density and rigidity of the structure, a plethora of simplified 2D/1D FSI models was justified.
Finally, in \cite{PaSt20B} Panasenko and Stavre analyze three dimensional flow in an axisymmetric thin 
tube with thin stiff elastic wall, and again depending on different scalings of density and stiffness of 
the structure, they justify various reduced 1D models.

To the best of our knowledge, rigorous derivation of a reduced 2D model starting from a simple 
linear 3D/3D FSI model, where thicknesses of both parts (fluid layer and structure) vanish 
simultaneously, is lacking in literature. 
Our aim in this paper is not only to fill this gap, but also to develop a convenient framework 
in which full understanding of the linear model will open access to rigorous derivation
of the nonlinear sixth-order thin-film equation, for instance \cite{HoMa04}, as a reduced model   
for more realistic nonlinear FSI problems. This is ongoing work \cite{BuMu20} and 
preliminaries are available in \cite{BuMu20B}. Let us summarize novelties and main contributions of our 
framework. First of all we present an {\em ansatz free} approach which is based on careful quantitative
estimates for the system's energy and energy dissipation. 
"Ansatz-free" in this context means that we have no assumptions on the shape nor size of unknowns, but only
on the system's coefficients and forcing terms. Having these estimates at hand, we 
identify the right relation between system's coefficients, which in the vanishing limit of small 
parameters provides the nontrivial reduced model given in terms of
a linear sixth-order evolution equation in 2D. Identification of the reduced model is performed rigorously
in the sense of weak convergence of solutions of the linear 3D/3D FSI problem to the solution
of the sixth-order equation, and assumptions needed for that are very weak since the weak formulation
of the linear model enjoys sufficient regularity (cf.~Theorem \ref{tm:main}). 
The relation between the system's coefficients
identifies the physical regime in which the reduced model is a good approximation of the original one.
Finally, our second main contribution are quantitative error estimates for approximation of solutions of the
original FSI problem with approximate solutions reconstructed from the reduced model. These
estimates then provide strong convergence results in respective norms (cf.~Theorem \ref{tm:EE}).

\subsection{Problem formulation}
We consider a physical model in which a three dimensional channel of relative
height $\eps > 0$ is filled
with an incompressible viscous fluid described by the Stokes equations, and the channel is covered 
by an elastic structure in the shape of a cuboid of relative height $h>0$ which is described by the linear elasticity
equations. Upon non-dimensionalisation of the model (domain and equations), 
we denote {\em (non-dimensional) material configuration domain} by 
$\material = \Omega_\eps \cup \omega\cup \Omega_h$, where 
$\Omega_\eps = (0,1)^{2}\times(-\eps,0)$ denotes the fluid domain, 
$\omega = (0,1)^{2}\times\{0\}$ is the interface between the two phases, 
which we often identify with $\omega\equiv(0,1)^2$,
and $\Omega_h = (0,1)^{2}\times(0,h)$ denotes the structure domain. 
The problem is then described by a system of partial differential equations:
\begin{align}
%\vro_f\pa_t\bv - \eta\Delta\bv + \nabla p &= 0\,,\quad \Omega_\eps\times(0,\infty)\,,\\
\vro_f\pa_t\bv - \diver\sigma_f(\bv,p) &= \bs f\,,\quad \Omega_\eps\times(0, T_\eps)\,,\label{1.eq:stokes}\\
\diver \bv &= 0\,,\quad \Omega_\eps\times(0, T_\eps)\,,\label{1.eq:divfree}\\
\vro_s\pa_{tt}\bu - \diver \sigma_s(\bu) &= 0\,, \quad \Omega_h\times(0, T_\eps)\,,\label{1.eq:elast}
%p(0,\cdot,\cdot)=0\,,\quad p(1,\cdot,t)&=\Pi(t)\,, \quad t \geq 0\,.
\end{align}
where fluid and structure stress tensors are given respectively by
\begin{align}\label{1.eq:const_law}
\sigma_f(\bv,p) = 2\eta \sym \nabla \bv - pI_3\,,\quad 
\sigma_s(\bu) = 2\mu\sym\nabla\bu  + \lambda(\diver\bu )I_3\,. 
\end{align}
Here $\sym(\cdot)$ denotes the symmetric part of the matrix, 
$\bs f$ denotes the density of an external fluid force, and $T_\eps>0$ is a given time horizon.
Unknowns in the above system are non-dimensional quantities:
the fluid velocity $\bv$, the fluid pressure $p$, and the structure displacement $\bu$. 
Constitutive laws (\ref{1.eq:const_law}) are given in terms of non-dimensional numbers,
which are in place of physical quantities: the fluid viscosity $\eta$ and Lam\'e constants
$\mu$ and $\lambda$, while $\vro_f$ and $\vro_s$ denote non-dimensional numbers in place of
the density of the fluid and the structure, respectively. 

The two subsystems (fluid and structure) are coupled through the
interface conditions on the fixed interface $\omega$: 
\begin{align}\label{def:kinematic_bc}
\pa_t\bu &= \bv\,, \quad \omega\times(0, T_\eps)\,,
\qquad \text{(kinematic -- continuity of velocities)\,,}\\
(\sigma_f(\bv,p) - \sigma_s(\bu))\bs e_3 &=0\,,\quad \omega\times(0, T_\eps)\,,
\qquad \text{ (dynamic -- stress balance)\,.} \label{def:dynamic_bc}
\end{align}
\begin{remark}\label{rem:fixeddomain}
Contrary to the intuition of the moving interface in FSI problems, system 
(\ref{1.eq:stokes})--(\ref{def:dynamic_bc}) is posed on the fixed domain with a fixed interface.
This simplification can be seen as a linearization of truly nonlinear dynamics
under the assumption of small displacements. 
Calculations justifying these linear models in the case of fluid-plate interactions can 
be found in \cite{Bol63, KKLTTW18}. In particular, 
such models are relevant for describing the high frequency, small displacement oscillations of 
elastic structures immersed in low Reynolds number viscous fluids \cite{DGHL03}.
\end{remark}

{\em Boundary and initial conditions.} For simplicity of exposition we assume periodic boundary conditions 
in horizontal variables for all unknowns. 
On the bottom of the channel
we assume no-slip condition $\bv = 0$, and the structure is free on the top boundary, 
i.e.~$\sigma_s(u)\bs e_3 = 0$. The system is for simplicity supplemented by trivial initial conditions:
\begin{align}\label{def:InitialCond}
\bv(0)=0,\; \bu(0)=0,\; \partial_t\bu(0)=0\,.
\end{align}
%whose regularity will be discussed afterwards.
\begin{remark}
Nontrivial initial conditions can also be treated in our analysis framework and under certain
assumptions the same results follow. However, for brevity of exposition we postpone this discussion
for a future work. 
%in Appendix \ref{app:initcond}. 
We could also involve a non\-tri\-vial 
volume force on the structure (nontrivial right hand side in (\ref{1.eq:elast})) under certain 
scaling assump\-tions, similar to (A1) and (A2) below for the fluid volume forces. 
However, again for simplicity we take the trivial one, which 
is in fact a common choice for applications in microfluidics \cite{SSA04}.
\end{remark}
\begin{remark}\label{PressureDrop} The above settled framework also incorporates a physically more
relevant problem, which instead of the periodic boundary conditions, 
involves the prescribed pressure drop between the inlet and the outlet of the channel.
As described in \cite{PaSt06}, this is a matter of the right choice of the fluid volume force $\bs f$.
\end{remark}
\noindent{\em Scaling ansatz and assumptions on data.} In our analysis we will assume that small parameters $\eps$ and $h$ are
related through a power law 
\begin{enumerate}
  \item[(S1)] $\eps = h^\gamma$ for some $\gamma>0$ independent of $h$.
\end{enumerate}
 Lam\'e constants and structure 
density are also assumed to
depend on $h$ as 
\begin{enumerate}
  \item[(S2)] $\mu^h = \hat{\mu}h^{-\kappa}$, $\lambda^h = \hat{\lambda}h^{-\kappa}$ and
$\vro_s^h = \hat{\vro}_sh^{-\kappa}$ for some
$\kappa>0$,
\end{enumerate}
and $\hat{\mu},\ \hat{\lambda}$ and $\hat\vro_s$ independent of $h$.
 Finally, the time
scale of the system will be set as
\begin{enumerate}
  \item[(S3)] $\T = h^\tau$ for some $\tau\in\R$.
\end{enumerate}
Scaling ansatz of the structure data
is motivated by the fact that Lam\'e constants and density are indeed large for solid materials, 
and parameter $\kappa$ may be interpreted as a measure of stiffness of the structure material 
\cite{Cia88}. See for instance \cite{PaSt20B} for various physical examples and scaling assumptions
on the stiffness of the structure. On the other hand, fluid data, in particular fluid viscosity $\eta$ is not affected by
$\eps$ scaling and is assumed to be constant in the limiting process. This is a standard assumption
in the classical lubrication approximation theory \cite{Sze12}.

\noindent For the fluid volume force $\bs f$ we assume:

\begin{enumerate}
  \setlength\itemsep{1.5mm}
  \item[(A1)] $\|\bs f\|_{L^\infty(0,T_\eps;L^2(\Omega_\eps;\R^3))} 
+ \|\pa_{\alpha}^2\bs f\|_{L^\infty(0,T_\eps;L^2(\Omega_\eps;\R^3))} \leq C\sqrt{\eps}$ for $\alpha=1,2$,
\item[(A2)] $\|\pa_t \bs f\|_{L^2(0,T_\eps;L^2(\Omega_\eps;\R^3))}\leq C\sqrt{\eps T_\eps}$\,,
\end{enumerate}
where $C>0$ is independent of $\bef$ and $\eps$. 
\begin{remark}
(A1) is a relatively weak assumption necessary for the derivation of the energy estimate (\ref{intro:energEH}),
and consequently derivation of the reduced model (cf.~Sec.~\ref{Sec:Estimates}), 
while (A2) is mainly needed for the error estimate analysis (cf.~Sec.~\ref{sec:EE}). 
Notice also that these assumptions are not 
``small data'' assumptions, since the small factor $\sqrt{\eps}$ comes from the size of the domain.
Physically, condition (A1) means that the force is not singular in $\eps$, while (A2) means that 
it does not oscillate too much in time. Notice that $\bs f$ arising from the pressure drop 
(see Remark \ref{PressureDrop}) satisfies these assumptions, provided that the pressure drop does 
not depend on $\eps$ and does not oscillate in time, which is the case in all relevant applications.

% For instance, any bounded load satisfies (A1), while additional boundedness of the time 
% derivative asserts (A2).
\end{remark}

%\smallskip
Let us emphasize at this point that unknowns of the system are \emph{ansatz free}, and our first aim
is to determine the right scaling of unknowns, which will eventually lead to a nontrivial 
reduced model as $h,\eps\downarrow0$.  
The appropriate scaling of unknowns will be determined solely from
a priori estimates, which are quantified in terms of small parameters $\eps$ and $h$.

\subsection{Main results}
The key ingredient of our convergence results, which provides all necessary a priori estimates,
is the following energy estimate. Let $(\bv^\eps,\bu^h)$ be a solution to 
(\ref{1.eq:stokes})--(\ref{def:InitialCond}), precisely defined in Section \ref{Sec:Estimates},
and assume (A1), then
\begin{align}\label{intro:energEH}
\frac{\vro_f}{2}\int_{\Omega_\eps}|\bv^\eps(t)|^2 \dd\bx
 & + \frac{\eta}{2}\int_0^t\!\!\int_{\Omega_\eps}|\nabla\bv^\eps|^2 \dd\bx\dd s 
 + \frac{\vro_s}{2}\int_{\Omega_h}|\partial_t\bu^h(t)|^2 \dd\bx \\
& + \int_{\Omega_h}\Big(\mu|\sym\nabla\bu^h(t)|^2 \nonumber
+ \frac{\lambda}{2}|\diver\bu^h(t)|^2\Big)\dd\bx \leq C t \eps^{3}\,
\end{align}
for a.e.~$t\in[0,T_\eps)$, where $C>0$ is independent of $(\bv^\eps,\bu^h)$, $\eps$, and of time
variable $t$. The proof of (\ref{intro:energEH}) is given in Section \ref{sec:impr_ee} 
(Proposition \ref{prop:impr_ee}).

Rescaling the thin domain $\Omega_{\eps,h}$ to the reference domain 
$\Omega = \Omega_-\cup\omega\cup\Omega_+$, 
as described in Section \ref{Sec:Estimates} in detail, and rescaling time and data 
according to the above scaling ansatz, the rescaled energy estimate (\ref{intro:energEH}) 
together with the weak formulation suggest to take
%constraints on free parameters $\gamma$, $\kappa$ and $\tau$:
\begin{equation}\label{intro:tau}
\tau = \kappa - 3\gamma - 3\quad \text{and}\quad \tau \leq -1\,
\end{equation}
in order to obtain a nontrivial limit model as $h\downarrow0$.
Employing (\ref{intro:tau}) in the rescaled problem (\ref{1.eq:stokes})-(\ref{def:InitialCond}) we obtain
weak convergence results and identify the reduced FSI model. The following theorem summarizes
our first main result.
\begin{theorem}\label{tm:main}
Let $(\bv(\eps), p(\eps), \bu(h))$ be a solution to the rescaled problem of 
(\ref{1.eq:stokes})-(\ref{def:InitialCond}), then the following convergence results 
hold. For the fluid part we have
\begin{align*}
\eps^{-2}\bv(\eps)&\rightharpoonup (v_1,v_2,0)\quad\text{weakly in }L^2(0,T;L^2(\Omega_-;\R^3))\,,\\
\eps^{-2}\pa_3\bv(\eps) &\rightharpoonup (\pa_3v_1,\pa_3v_2,0)\quad\text{weakly in }L^2(0,T;L^2(\Omega_-;\R^3))\,,\\
p(\eps )&\rightharpoonup p \quad\text{weakly in }L^2(0,T;L^2(\Omega_-))\,,
\end{align*}
on a subsequence as $\eps\downarrow0$.
The limit velocities are explicitly given in terms of the pressure in the sense of distributions
\begin{equation}\label{intro:v_aplha}
v_\alpha(\by,t) = \frac{1}{2\eta}y_3(y_3+1)\pa_\alpha p(y',t) 
+ F_\alpha(\by,t) + (1+y_3)\pa_ta_\alpha(t)\,,
\quad (\by,t)\in\Omega_-\times(0,T)\,,%\ \alpha=1,2\,,
\end{equation} 
where $\displaystyle F_\alpha(\cdot,y_3,\cdot) 
= \frac{y_3+1}{\eta}\int_{-1}^{0} \zeta_3 f_\alpha(\cdot,\zeta_3,\cdot)\,\dd \zeta_3 + 
\frac{1}{\eta}\int_{-1}^{y_3}(y_3-\zeta_3) f_\alpha(\cdot,\zeta_3,\cdot)\,\dd \zeta_3$ 
and $\pa_ta_\alpha\in L^\infty(0,T)$ denote limit of translational structure velocities
(cf.~Section \ref{sec:translat}). 

For the structure part on the limit we find the linear bending plate model
\begin{align*}
h^{2-\kappa}\left(
\begin{array}{r}
u_1(h) - a_1(h) \\ 
u_2(h) - a_2(h)\\
 hu_3(h) 
\end{array}
\right)  
&\overset{\ast}{\rightharpoonup} 
\left(
\begin{array}{r}
-(z_3 - \frac12)\pa_1 w_3 \\[3pt] -(z_3 - \frac12)\pa_2 w_3 \\ w_3
\end{array}
\right)\quad\text{weakly* in } 
L^\infty(0,T;H^1(\Omega_+;\R^3))\,,
\end{align*}
where $w_3\in L^\infty(0,T;\Hper^2(\omega))$ and $a_\alpha(h)\subset L^\infty(0,T)$ denote
horizontal translations of the structure.
Furthermore, the vertical limit displacement $w_3$ is related to the limit pressure $p$
in the sense of distributions as
\begin{align}\label{intro:p}
p &= \chi_\tau\hat\vro_s\pa_{tt}w_3 + 
\frac{8\hat\mu(\hat\mu + \hat\lambda)}{3(2\hat\mu + \hat\lambda)}(\Delta')^2w_3\,,
\end{align} 
where $\chi_\tau = 0$ for $\tau < -1$ and $\chi_\tau = 1$ for $\tau = -1$,
 $\hat\lambda, \hat\mu$ and $\hat\vro_s$ are rescaled Lam\'e
constants and material density according to (S2), while $(\Delta')^2$ denotes the 
bi-Laplace operator in horizontal variables. Finally, the system (\ref{intro:v_aplha})--(\ref{intro:p})
is closed with a sixth-order evolution equation for $w_3$ 
\begin{equation}\label{intro:reduced}
\pa_t w_3 - \chi_\tau\frac{\hat\vro_s}{12\eta}\Delta'\pa_{tt}w_3 
- \frac{2\hat\mu(\hat\mu + \hat\lambda)}{9\eta(2\hat\mu + \hat\lambda)}(\Delta')^3w_3 = F \, 
\end{equation}
with periodic boundary conditions and trivial initial datum. The right hand side $F$ is given by
$\displaystyle F(y',t) = -\int_{-1}^0 \left( \pa_1F_1\,\dd y_3 + \pa_2 F_2\right)\dd y_3$.
\end{theorem}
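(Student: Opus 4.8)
The plan is to run a standard weak-convergence compactness argument on the rescaled FSI system, using the rescaled version of the energy inequality \eqref{intro:energEH} as the sole source of a priori bounds. First I would rewrite \eqref{1.eq:stokes}--\eqref{def:InitialCond} on the fixed reference domain $\Omega = \Omega_-\cup\omega\cup\Omega_+$ via the anisotropic change of variables $y_3 = x_3/\eps$ on the fluid part and $z_3 = x_3/h$ on the structure part, rescale time by $t\mapsto t/\T$ with $\T = h^\tau$, and insert the scaling ansatz (S1)--(S3). The rescaled energy estimate then dictates the correct powers of $\eps$ by which to normalise each unknown: one reads off that $\eps^{-2}\bv(\eps)$, $\eps^{-2}\pa_3\bv(\eps)$, $p(\eps)$, and the combination $h^{2-\kappa}\big(u_1-a_1,u_2-a_2,hu_3\big)$ are bounded in the indicated spaces, after the horizontal translations $a_\alpha(h)$ (the components of the displacement that carry no elastic energy, because $\sym\nabla$ kills them) are subtracted off — this is why one must first split $\bu(h)$ into its rigid in-plane translation part and the rest, as in Section~\ref{sec:translat}. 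These bounds give weak (weak-$*$) limits along a subsequence, and the constraint $\tau = \kappa - 3\gamma - 3$, $\tau\le -1$ in \eqref{intro:tau} is precisely what makes every term in the rescaled weak formulation stay $O(1)$ rather than blowing up or vanishing.

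Next I would identify the structure of the limits. From the scaled elasticity estimate, the limit of the rescaled displacement must lie in the kernel of the relevant scaled strain operator as $h\downarrow 0$; this is the classical Kirchhoff--Love ansatz, forcing the limit to have the Bernoulli--Navier form $\big(-(z_3-\tfrac12)\pa_1 w_3,\,-(z_3-\tfrac12)\pa_2 w_3,\,w_3\big)$ with $w_3$ independent of $z_3$ and $w_3\in L^\infty(0,T;\Hper^2(\omega))$ — exactly as in dimension reduction for linearly elastic plates. For the fluid, the scaled Stokes momentum equation in the $3$-direction shows $\pa_3 p \to 0$, so the limit pressure is independent of $y_3$; the scaled divergence-free condition together with the no-slip condition at $y_3=-1$ and the kinematic coupling $\pa_t\bu = \bv$ at $\omega$ lets me solve the scaled horizontal momentum balance (an ODE in $y_3$ with the pressure gradient and the rescaled force as data) to get the explicit Poiseuille-type profile \eqref{intro:v_aplha}. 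Passing to the limit in the kinematic condition and integrating the divergence-free relation across the fluid layer yields a Reynolds-type relation linking $\pa_t w_3$, $\Delta' p$, and $F$; combining it with the limiting stress-balance condition \eqref{def:dynamic_bc} on $\omega$ — which, after rescaling, expresses $p$ as the sum of the inertial term $\chi_\tau\hat\vro_s\pa_{tt}w_3$ and the plate bending term $\tfrac{8\hat\mu(\hat\mu+\hat\lambda)}{3(2\hat\mu+\hat\lambda)}(\Delta')^2 w_3$, i.e.\ \eqref{intro:p} — and eliminating $p$ produces the sixth-order equation \eqref{intro:reduced}. The dichotomy $\chi_\tau\in\{0,1\}$ just records whether the structure inertia survives ($\tau=-1$) or is subcritical ($\tau<-1$).

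The main technical obstacles I anticipate are threefold. First, the pressure: the rescaled pressure is only controlled through the momentum equation (no direct a priori bound comes from the energy), so I would need the usual Bogovski\u\i/inf--sup construction on the thin domain, carefully tracking the $\eps$-dependence of the operator norm so that the recovered pressure bound is compatible with the scalings above; keeping the pressure estimate uniform under the anisotropic rescaling is the delicate point. Second, justifying that the limits of $\eps^{-2}\bv$ and its vertical derivative indeed have vanishing third component and the stated profile requires passing to the limit in a coupled system where the test functions themselves must respect the coupling at $\omega$ — so one must build an admissible space of limit test functions (Kirchhoff--Love on the plate side, matched through the trace at $\omega$ to the fluid side) and argue density. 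Third, identifying the coupling on $\omega$ in the limit is subtle because the stress-balance condition \eqref{def:dynamic_bc} is an equality of traces of quantities that converge only weakly; the trick is to test the weak formulation with functions supported across the interface and let the fluid viscous stress and the plate stress resultants converge separately, which is where assumption (A1) (control of $\pa_\alpha^2\bs f$) gets used to handle the tangential force contributions. Everything else — lower-semicontinuity to keep the limits in the right spaces, uniqueness of the limit problem to upgrade subsequential convergence to full convergence, and verification of the periodic boundary conditions and trivial initial data — is routine once these three points are in place.
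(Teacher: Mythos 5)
Your proposal captures the right overall skeleton — rescale to the reference domain, derive uniform bounds from the energy inequality, pass to the limit in the weak formulation, and close the system by combining a Reynolds relation with the limiting plate equation. But there are a few places where the argument as described is materially thinner than what is actually needed, and one attribution is wrong.

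First, for the structure displacements you write that the translations carry no elastic energy ``because $\sym\nabla$ kills them'' and then appeal to ``the classical Kirchhoff--Love ansatz'' to claim the limit must have Bernoulli--Navier form. That observation is correct but far from sufficient in an ansatz-free setting. The actual mechanism is the Griso decomposition of $\bu(h)$ into an elementary plate displacement plus a warping term, together with the Griso estimate
$\|\sym\nabla_h\bu_E\|_{L^2}^2 + \|\nabla_h\tilde\bu\|_{L^2}^2 + h^{-2}\|\tilde\bu\|_{L^2}^2 \leq C\|\sym\nabla_h\bu\|_{L^2}^2$,
which is what converts the single bound on $\sym\nabla_h\bu(h)$ into the whole package of weak-$\ast$ convergences \eqref{eq:w1_conv}--\eqref{eq:lss} in $L^\infty(0,T;H^1(\Omega_+))$, including the limiting form of the scaled strain. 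Without naming this (or an equivalent Korn-type decomposition) you only have an $L^2$ bound on $\sym\nabla_h\bu(h)$ and no way to identify the $H^1$-structure of the limit or to see that $w_3$ lies in $\Hper^2(\omega)$. This is the central technical tool of the ``ansatz-free'' philosophy the paper advertises, and your proposal treats it as routine.

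Second, on the pressure: you correctly flag the Bogovski\u\i{} construction and the $\eps$-dependence of its operator norm as the delicate point, but the proof actually requires a two-step argument that your sketch does not see. The Bogovski\u\i{} test function with $\left.\bs\phi\right|_\omega=0$ only controls $p^\eps - \pi^\eps$, where $\pi^\eps(t)$ is the spatial mean. Unlike a pure Stokes problem, here the pressure is genuinely unique (the elastic wall ``feels'' it), so the mean value must be controlled separately. The paper does this by choosing the vertically-stretching test pair $(\bs\phi,\bs\psi) = \zeta\big((0,0,x_3+\eps),(0,0,\eps)\big)$ and showing the resulting expression is bounded precisely under the constraint $\tau = \kappa - 3\gamma - 3 \leq -1$. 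This is where the scaling relation is first forced, not merely where it makes the weak form $O(1)$. Omitting the mean-value step leaves the pressure bound, and hence the weak limit $p(\eps)\rightharpoonup p$, unjustified.

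Third, you claim that the control of $\pa_\alpha^2\bs f$ in assumption (A1) ``gets used to handle the tangential force contributions'' when passing to the limit in the stress balance. That is not where (A1) enters. The second-derivative part of (A1) is used much earlier, in the derivation of the improved energy estimate \eqref{ineq:energEH}: one tests the weak form with $(-\pa_\alpha^2\bv^\eps, -\pa_\alpha^2\pa_t\bu^h)$ to obtain \eqref{eq:energy_estimate3}, which is what upgrades the bound from $\|\sym\nabla\bv^\eps\|$ to the full gradient $\|\nabla\bv^\eps\|$ and changes the scaling from $Ct\eps$ to $Ct\eps^3$. Without this improved estimate the rescaled bounds that you read off (on $\eps^{-2}\bv(\eps)$ etc.) simply do not follow. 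In the paper the stress-balance coupling never needs to be passed to the limit as an equality of traces at all: it is encoded in the joint weak formulation, and the limit coupling is extracted by testing with Kirchhoff--Love-type test functions matched at $\omega$, not by a separate trace argument.

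Finally, a smaller omission: your proposal never mentions that the in-plane limit displacements $(w_1,w_2)$ must be shown to be spatially constant (via the decoupled variational identity \eqref{eq:w12_1}) before they can be dropped; that step is what reduces the limiting plate system to a single scalar equation for $w_3$.

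So: the route is the same as the paper's in broad outline, but the Griso decomposition and the two-step pressure estimate are genuine missing ideas, and the role you assign to (A1) would send you looking in the wrong place.
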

\noindent We refer to equation (\ref{intro:reduced}) as a linear sixth-order thin-film equation. 
The name "thin-film equation" is consistent with the name of its more popular nonlinear siblings:
fourth-order thin-film equations \cite{BeGr05,Ber98,ODB97} and sixth-order thin-film equations
\cite{HoMa04, King89}, where nonlinearities appear due to the moving boundary of the
fluid domain. Since in our model the fluid domain is fixed (cf.~Remark \ref{rem:fixeddomain}),
depth integration of the limit divergence free equation eventually yields the linear equation
(see Section \ref{sec:thinfilmeq} below).
Complete proof of Theorem \ref{tm:main} with detailed discussions is given in Section \ref{sec:reduced}. 
 
Evolution equation (\ref{intro:reduced}) now serves as a reduced FSI model of the original
problem (\ref{1.eq:stokes})--(\ref{def:InitialCond}). Namely, by solving (\ref{intro:reduced}), we can
approximately reconstruct solutions of the original FSI
problem in accordance with the convergence results of the previous 
theorem. Let $w_3$ be a solution of equation (\ref{intro:reduced}). The 
approximate pressure $\paa^\eps$ is defined by 
\begin{align*}
\paa^\eps(\bx,t) = p(x',t)\,,\quad (\bx,t)\in\Omega_\eps\times(0,T)\,,
\end{align*}
where $p$ is given by (\ref{intro:p}) and the approximate fluid velocity $\bs\va^\eps$ is defined by
\begin{align*}%\label{intro:Approx}
{\bs\va}^\eps(\bx,t)=\eps^2\Big(v_1(x',\frac{x_3}{\eps},t),v_2(x',\frac{x_3}{\eps},t),0\Big)\,,
\quad (\bx,t)\in\Omega_\eps\times(0,T)\,,
\end{align*}
with $v_\alpha$ given by (\ref{intro:v_aplha}). Accordingly, we also define the
approximate displacement $\bs\ua^h$ as
\begin{equation*}
\bs\ua^h(\bx,t) = h^{\kappa-3}\left(h^{-\gamma}a_1 - \Big(x_3 
- \frac{h}{2}\Big)\pa_1w_3(x',t),h^{-\gamma}a_2 - \Big(x_3 - \frac{h}{2}\Big)\pa_2w_3(x',t), 
w_3(x',t) \right)\,
\end{equation*}
for all $(\bx,t)\in\Omega_h\times(0,T)\,,$ where 
$\displaystyle a_\alpha(t) = \int_0^t\pa_ta_\alpha \dd s$, $\alpha = 1,2$, and 
$\pa_ta_\alpha$ are given by (\ref{def:pata}).    
Observe that approximate solutions are defined
on the original thin domain $\Omega_{\eps,h}$, but in rescaled time.

Our second main result provides error estimates for approximate solutions, and thus strong convergence
results in respective norms.
\begin{theorem}\label{tm:EE}
Let $(\bv^\eps,p^\eps,\bu^h)$ be a solution to the original FSI problem 
(\ref{1.eq:stokes})--(\ref{def:InitialCond}) in rescaled time and  
% and assume that the volume force $\bs f^\eps$
% additionally satisfies $\|\pa_t \bs f^\eps\|_{L^2(0,T;L^2(\Omega_\eps))}\leq C\sqrt{\eps}$. 
let $(\bs\va^\eps,\paa^\eps,\bs\ua^h)$ be approximate solution constructed from the reduced model
as above. Let us additionally assume that 
$\max\{2\gamma+1, \frac74\gamma+\frac32\} \leq \kappa < 2 + 2\gamma$, then
\begin{align*}
\|\bv^\eps - \bs \va^\eps\|_{L^2(0,T;L^2(\Omega_\eps))} &\leq 
C\eps^{5/2}h^{\min\{\gamma/2,\,2\gamma-\kappa+2\}}\,,\\
\|p^\eps - \paa^\eps\|_{L^2(0,T;L^2(\Omega_\eps))} 
&\leq C\eps^{1/2}h^{\min\{\gamma/2,2\gamma-\kappa+2\}}\,,\\
\|u_\alpha^h - \ua_\alpha^h\|_{L^\infty(0,T;L^2(\Omega_h))} &\leq 
Ch^{\kappa-3/2}h^{\min\{1,\gamma/2, 2\gamma+2-\kappa\}} + 
C\sqrt{h}\|a_\alpha^h - h^{\kappa-3-\gamma}a_\alpha\|_{L^\infty(0,T)}\,,\\
\|u_3^h - \ua_3^h\|_{L^\infty(0,T;L^2(\Omega_h))} &\leq 
Ch^{\kappa-5/2}h^{\min\{1/2,\gamma/2, 2\gamma+2-\kappa\}}\,,
\end{align*}
where $C>0$ denote generic positive constants independent of $\eps$ and $h$.
\end{theorem}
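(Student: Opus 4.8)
The plan is to derive the error estimates by testing the equations satisfied by the \emph{differences} $\bv^\eps - \bs\va^\eps$ and $\bu^h - \bs\ua^h$ against suitable test functions and using the coercivity coming from viscosity and elasticity, exactly as in the energy estimate~\eqref{intro:energEH}, but now tracking the residuals produced by the approximate solution. First I would write down the equations for the approximate pair: by construction, $(\bs\va^\eps,\paa^\eps,\bs\ua^h)$ solves the Stokes system, the divergence constraint, and the elasticity equation only up to residual terms $\res_f$, $\res_{\mathrm{div}}$, $\res_s$ and up to a residual in the coupling conditions~\eqref{def:kinematic_bc}--\eqref{def:dynamic_bc}. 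These residuals must be computed explicitly from~\eqref{intro:v_aplha}, \eqref{intro:p}, \eqref{intro:reduced} and the definitions of $\bs\va^\eps,\paa^\eps,\bs\ua^h$, then rescaled to the thin domain $\Omega_{\eps,h}$; each residual will carry a definite power of $\eps$ and $h$, and the assumption $\max\{2\gamma+1,\tfrac74\gamma+\tfrac32\}\le\kappa<2+2\gamma$ is precisely what is needed to keep the dominant powers on the right-hand side. The ``$F_\alpha$'' and $a_\alpha$ terms in the velocity ansatz have been designed so that the transverse momentum balance and the depth-integrated divergence equation hold exactly in the limit; what remains are higher-order-in-$\eps$ corrections (the inertial term $\vro_f\pa_t\bv$, the in-plane viscous term $\eta\pa_\alpha^2$, the $x_3$-dependence neglected in the lubrication profile) and a mismatch in the dynamic interface condition coming from the fact that $\paa^\eps$ is $x_3$-independent.

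Next I would set up the energy identity for the difference. Denote $\bv^\eps_* := \bv^\eps - \bs\va^\eps$ and $\bu^h_* := \bu^h - \bs\ua^h$. Subtracting the approximate equations from the exact ones, testing the fluid equation with $\bv^\eps_*$ and the structure equation with $\pa_t\bu^h_*$, adding, and integrating over $(0,t)$, the coupling terms on $\omega$ combine — up to the interface residual — thanks to the kinematic condition $\pa_t\bu^h_* = \bv^\eps_*$ on $\omega$ (which holds exactly because $\pa_t\bs\ua_3^h=\bs\va^\eps$ was arranged at the interface, modulo the translational parts $a_\alpha$). This yields
\begin{align*}
\frac{\vro_f}{2}\!\int_{\Omega_\eps}\!|\bv^\eps_*(t)|^2
+ \frac{\eta}{2}\!\int_0^t\!\!\int_{\Omega_\eps}\!|\nabla\bv^\eps_*|^2
+ \frac{\vro_s}{2}\!\int_{\Omega_h}\!|\pa_t\bu^h_*(t)|^2
+ \int_{\Omega_h}\!\Big(\mu|\sym\nabla\bu^h_*(t)|^2 + \tfrac{\lambda}{2}|\diver\bu^h_*(t)|^2\Big)
\le \mathcal{R}(t)\,,
\end{align*}
where $\mathcal{R}(t)$ collects the duality pairings of all residuals against $\bv^\eps_*$, $\pa_t\bu^h_*$, and $\bu^h_*$. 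I would estimate each term in $\mathcal{R}(t)$ by Cauchy--Schwarz, scaled Poincaré and Korn inequalities on the thin domains (the constants of which, in rescaled variables, are controlled as in Section~\ref{Sec:Estimates}), and a trace inequality for the interface residual, absorbing the factors of $\nabla\bv^\eps_*$ and $\sym\nabla\bu^h_*$ into the left-hand side and handling the pressure residual via the inf-sup / Bogovskii operator on the fluid domain so that $\|p^\eps-\paa^\eps\|_{L^2L^2}$ is controlled by $\|\bv^\eps_*\|$, the data, and the residuals. A Grönwall argument in $t$ then closes the estimate, and undoing the rescaling of domain, time and data converts the $L^\infty_tL^2$ and $L^2_tH^1$ bounds on the rescaled differences into the stated estimates, where the exponents $\min\{\gamma/2,2\gamma-\kappa+2\}$ etc.\ arise as the minimum over the competing residual contributions. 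The displacement estimates in $L^\infty_tL^2$ follow from the $L^\infty_tL^2$ control of $\pa_t\bu^h_*$ together with the trivial initial data, and the split into $u_\alpha$ and $u_3$ components reflects the different scaling weights $h^{\kappa-3}$ vs.\ $h^{\kappa-2}$ in the ansatz; the leftover term $C\sqrt h\|a_\alpha^h - h^{\kappa-3-\gamma}a_\alpha\|_{L^\infty(0,T)}$ is simply the part of the in-plane displacement error not controlled by the energy, coming from the rigid translations, which has to be carried along unestimated here.

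The main obstacle I expect is the careful bookkeeping of the residual scalings — in particular making sure that the pressure residual (the price of using an $x_3$-independent approximate pressure in a genuinely three-dimensional stress balance on $\omega$) and the neglected inertial/in-plane-viscosity terms are all controlled, which is exactly where the two-sided constraint on $\kappa$ in terms of $\gamma$ enters: the lower bounds $\kappa\ge 2\gamma+1$ and $\kappa\ge\tfrac74\gamma+\tfrac32$ guarantee the structure inertia and the secondary fluid corrections are subdominant, while $\kappa<2+2\gamma$ keeps the whole scheme in the regime where the reduced model~\eqref{intro:reduced} is the correct limit (consistent with~\eqref{intro:tau}). A secondary technical difficulty is the treatment of the interface coupling: one must verify that the boundary terms produced by integration by parts in the fluid and in the structure genuinely cancel up to a controllable residual, which requires the approximate kinematic condition to hold on $\omega$ with only an $O(\text{higher order})$ defect — this is why $a_\alpha(h)$ is defined through~\eqref{def:pata} so as to match the limiting translational velocity. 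Handling the pressure via a suitable right inverse of the divergence (Bogovskii-type operator) on the thin fluid layer, with explicit control of its norm in $\eps$, is the remaining delicate point, but this is standard and parallels the derivation of~\eqref{intro:energEH} in Section~\ref{sec:impr_ee}.
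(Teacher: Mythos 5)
Your global strategy (error equation, energy identity with residuals, Poincar\'e/Korn/trace on thin domains, Gr\"onwall, Bogovskii for the pressure) is the right outline and is the same skeleton the paper follows. However, there is a concrete gap in the central step that makes the argument as stated not go through.

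You claim that testing the error equation with $(\bv^\eps_*,\pa_t\bu^h_*)=(\bs e_f^\eps,\pa_t\bs e_s^h)$ is admissible because ``the kinematic condition $\pa_t\bu^h_*=\bv^\eps_*$ on $\omega$ holds exactly.'' It does not: the approximate pair $(\bs\va^\eps,\bs\ua^h)$ does \emph{not} satisfy the kinematic coupling on $\omega$. On the interface the third component of $\bs\va^\eps$ is $O(\eps^5)$, whereas $\T^{-1}\pa_t\ua_3^h|_\omega=\eps^3\pa_tw_3$; the horizontal components carry an unmatched bending contribution $\propto h\pa_\alpha\pa_tw_3$; and the exact horizontal translations of $\bu^h$ do not equal $h^{\kappa-3-\gamma}a_\alpha$. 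Consequently $(\bs e_f^\eps,\T^{-1}\pa_t\bs e_s^h)$ is \emph{not} in the admissible test space $\W$, and simply pairing the interface terms does not make them cancel. The paper addresses this by two devices you do not have: (i) decomposing the structure test function into even/odd parts in $(x_3-h/2)$ and taking $\bs\psi=\T^{-1}(\pa_te_{s,1}^\even,\pa_te_{s,2}^\even,\pa_te_{s,3}^\odd)$, which exploits the parity of the ansatz \eqref{def:ua_h} to annihilate the dominant (and otherwise uncontrollable) part of the structure residual $\bs\res_s^h$ — note $\pa_ta_\alpha$ has size $O(1)$ so this cancellation is essential, not a convenience; (ii) constructing a divergence-free fluid corrector $\bs\varphi$ (with $\bs\phi=\bs e_f^\eps+\bs\varphi$) whose interface trace compensates for the mismatch, and proving the nontrivial bound $\|\nabla\bs\varphi\|_{L^\infty L^2}\lesssim\eps^{5/2}$ (Lemma \ref{lemma:corr_phi}), which itself relies on the $L^\infty_tL^2$ higher-order energy estimate \eqref{ineq:energEHH}, the Griso decomposition of $\pa_t\bu^h$, and a separate lemma bounding $\int_{\Omega_h}u_3^h$ and $\int_{\Omega_h}\pa_tu_3^h$. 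Without the parity decomposition and the corrector estimate, your right-hand side $\mathcal R(t)$ contains terms that do not close — in particular the structure residual contributes at the same order as the quantities you are trying to bound.

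A secondary omission: your proposal stops at the energy-type bound $\|\sym\nabla\bs e_s^h\|_{L^\infty L^2}$, but passing from this to the componentwise $L^\infty_tL^2$ displacement estimates in the theorem requires a second round of testing (with $\bs\psi=\T^{-1}\pa_t\bs e_s^\odd$) to control the rotations $\bs r^h$ in the Griso decomposition, plus Korn and Poincar\'e on $(0,h)$; this is where the constraint $\kappa\ge\max\{2\gamma+1,\tfrac74\gamma+\tfrac32\}$ actually enters (to make $h^{2\kappa-3\gamma-2}\le h^\gamma+h^{4\gamma+4-2\kappa}$), not, as you suggest, merely to keep the reduced model in the right regime.
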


\begin{remark}
Note that the error estimate of horizontal fluid velocities relative to the norm of velocities, as 
well as the relative error estimate of the pressure is
 $O(h^{\min\{\gamma/2, 2\gamma-\kappa+2\}})$. Hence, for $\kappa\leq\frac32\gamma+2$, 
 this convergence rate is $O(\sqrt{\eps})$. Since $v_3^\eps$ is of lower order, we would need to
 construct a better (higher-order) corrector for establishing error estimates in the vertical 
 component of the fluid velocity. Such construction would require additional tools and would thus
 exceed the scope of this paper. In the leading order of the structure displacement, the
 vertical component, we have the relative convergence rate $O(h^{\min\{1/2,\gamma/2,2\gamma-\kappa+2\}})$,
which for $\kappa\leq \frac32\gamma+2$ means $O(h^{\min\{1/2,\gamma/2\}})$, i.e.~$O(\sqrt{\eps})$ for $\gamma\leq 1$ and
$O(\sqrt{h})$ for $\gamma>1$. In horizontal components, in-plane displacements, the dominant part
of the error estimates are errors in horizontal translations, which are artefacts of
periodic boundary conditions (cf.~Section \ref{sec:translat}). Neglecting these errors which cannot be controlled in a better way, 
the relative error estimate of horizontal displacements is 
$O(h^{\min\{1,\gamma/2, (2\gamma+2-\kappa)_+\}})$. For $\kappa\leq\frac32\gamma+2$, this estimate is 
$O(h^{\min\{1,\gamma/2\}})$, which in addition means $O(\sqrt{\eps})$ for $\gamma\leq2$ and
$O(h)$ for $\gamma > 2$.  

Let us point out that one cannot expect better convergence rates for such first-order
approximation without dealing with boundary layers, which arise around interface $\omega$
due to a mismatch of the interface conditions for approximate solutions.
For example, in \cite{MaPa01} the obtained convergence rate for the Poiseuille flow in the case 
of rigid walls of the fluid channel is $O(\sqrt{\eps})$. On the other hand, convergence rate for
the clamped Kirchhoff-Love plate is found to be $O(\sqrt{h})$ \cite{Des81}.
Additional conditions on parameters $\kappa$ and $\gamma$ which appear in the theorem
are mainly due to technical difficulties of dealing with structure translations in horizontal
directions. If these translations were not present in the model, the error estimates of 
Theorem \ref{tm:EE} would improve.
\end{remark}
\noindent The proof of Theorem \ref{tm:EE} is demonstrated in Section \ref{sec:EE}.

%\section{Preliminaries}
\section{Energy estimates and weak solutions}\label{Sec:Estimates}
\subsection{Notation and definitions}
Let $\bx=(x',x_3)=(x_1,x_2,x_3)\in \Omega_{\eps,h}$ denotes the spatial variables in the original thin 
domain and let $\by=(y',y_3)=(x',x_3/\eps)\in (0,1)^{2}\times (-1,0) =:\Omega_-$ 
and $\bz=(z',z_3)=(x',x_3/h)\in (0,1)^{2}\times (0,1)=:\Omega_+$ denote the fluid and the structure variables in the reference domain, respectively. A sketch of the original
thin domain is depicted in Figure \ref{fig:domain}.    
\begin{figure}
\includegraphics[width = 8cm]{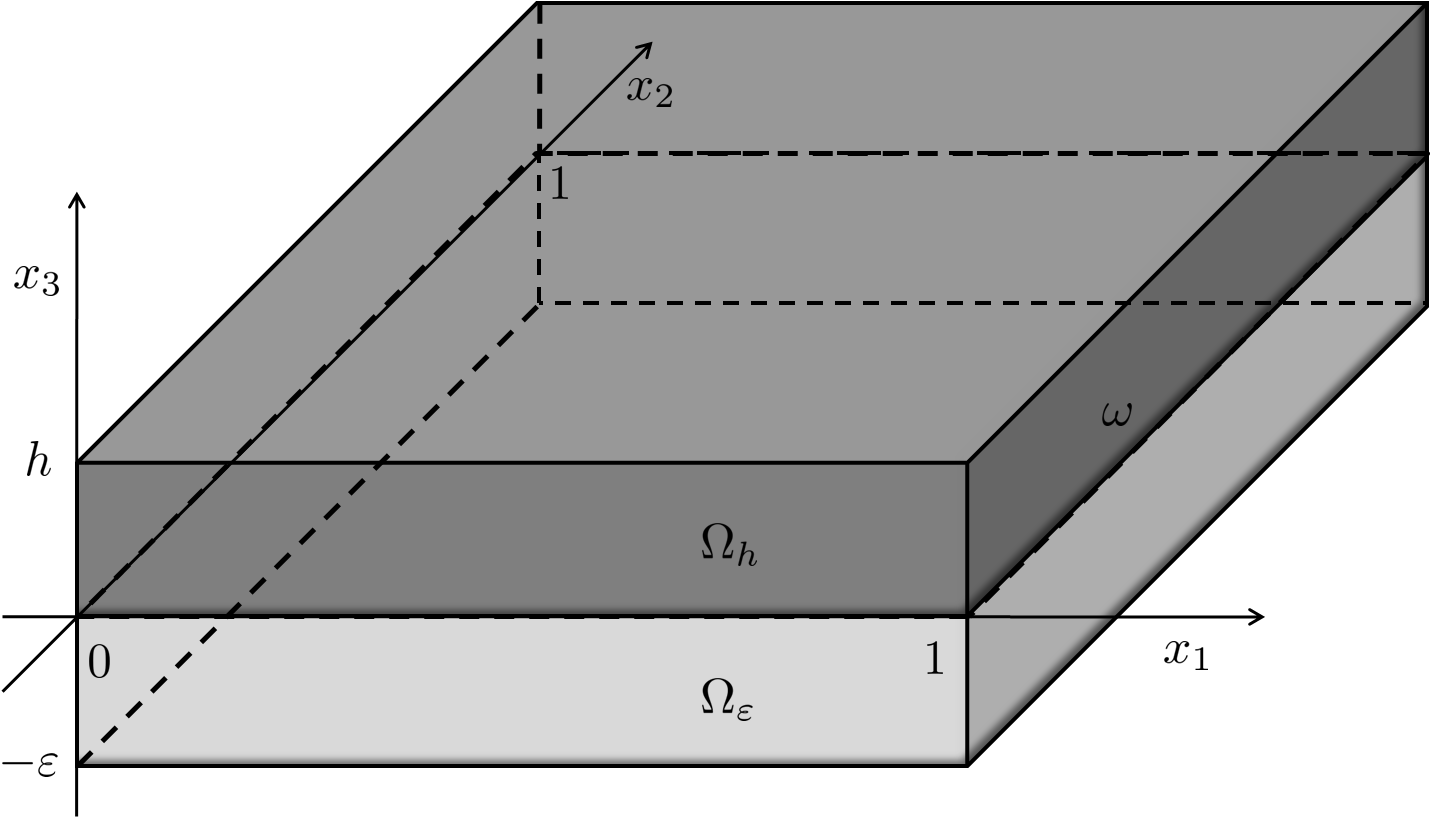}
\caption{Sketch of the original thin domain $\Omega_{\eps,h}$.}
\label{fig:domain}
\end{figure}
Solutions in the original domain $\material$ will be denoted by $\eps$ or $h$ in superscripts,
i.e.~$\bv^\eps$, $p^\eps$ and $\bu^h$. On the reference domain, solutions 
will be denoted by $\eps$ or $h$ in parentheses and they are defined according to
\begin{equation}\label{def:UepsH}
\bv(\eps)(\by,t):=\bv^\eps(\bx,t)\,,\quad p(\eps)(\by,t):=p^\eps(\bx,t)\,,\quad \bu(h)(\bz,t):=\bu^h(\bx,t)
\end{equation}
for all $(\bx,t)\in\Omega_{\eps,h}\times(0,T_\eps)$.
As standard, vectors and vector-valued functions are written in bold font. The inner product between two
vectors in $\R^3$ is denoted by one dot $\cdot$ and the inner product between two matrices is denoted by
two dots $:$\,. 
Next, we denote scaled gradients by
$\nabla_\eps=(\partial_{y_1},\partial_{y_2},\dfrac{1}{\eps}\partial_{y_3})$ and 
$\nabla_h=(\partial_{z_1},\partial_{z_2},\dfrac{1}{h}\partial_{z_3})$, and they satisfy 
the following identities 
\begin{equation}\label{def:nablaEpsH}
\nabla\bv^{\eps}(\bx,t)=\nabla_\eps \bv(\eps)(\by,t),\qquad \nabla \bu^{h}(\bx,t)=\nabla_h\bu(h)(\bz,t)\,,
\quad (\bx,t)\in\Omega_{\eps,h}\times(0,T_\eps)\,.
\end{equation}
%Notation $\sym\nabla\bu$ is self-explaining, $\sym\nabla\bu = \frac12(\nabla\bu + (\nabla\bu)^T)$.
When the domain of a function is obvious, partial derivatives $\pa_{x_i}$, $\pa_{y_i}$ 
or $\pa_{z_i}$ will be 
simply denoted by $\pa_i$ for $i=1,2,3$. Greek letters $\alpha,\beta$ in indices will 
indicate only horizontal variables, i.e.~$\alpha,\beta = 1,2$.

%The energy estimates for strong solutions, which are the same as for weak solutions derived bellow,
The basic energy estimate for the original FSI problem (\ref{1.eq:stokes})--(\ref{def:InitialCond}), 
given in Section \ref{sec:basic_ee} below,
suggest the following functions spaces as appropriate for the definition of 
weak solutions and test functions. 
For fluid velocity, the appropriate function space appears to be
\begin{align*}
\quad \V_F(0,T_\eps;\Omega_\eps) = L^\infty(0,T_\eps;L^2(\Omega_\eps;\R^3))\cap L^2(0,T_\eps;V_F(\Omega_\eps))\,,
\end{align*}
where $V_F(\Omega_\eps) = \left\{\bv\in H^1(\Omega_\eps;\R^3)\ 
:\ \diver\bv = 0\,,\ \bv|_{x_3=-\eps}=0\,,\ \bv \text{ is }\omega\text{-periodic} \right\}$, and
$T_\eps>0$ is a given time horizon. Even though we will work with global-in-time solutions, 
all estimates will be carried out on the time interval $(0,T_{\eps})$ in accordance with ansatz (S3). 
Here the notation $T_{\eps}$ is introduced to emphasize the difference between the physical 
time-horizon $T_{\eps}$ used in this section and the re-scaled time horizon $T$ used in later sections.
Similarly, the structure function space will be
\begin{align*}
\V_S(0,T_\eps;\Omega_h) = W^{1,\infty}(0,T_\eps;L^2(\Omega_h;\R^3))\cap L^\infty(0,T_\eps;V_S(\Omega_h))\,,
\end{align*}
where $V_S(\Omega_h) = \left\{\bu\in H^1(\Omega_h;\R^3)\ :\ \bu \text{ is }\omega\text{-periodic} \right\}$.
Finally, the solution space of the coupled problem (\ref{1.eq:stokes})--(\ref{def:InitialCond}) on the 
thin domain will be compound of previous spaces involving the kinematic interface condition
(\ref{def:kinematic_bc}) as a constraint:
\begin{align}\label{2.def:sol_space}
\V(0,T_\eps;\Omega_{\eps,h}) = \big\{(\bv,\bu)\in \V_F(0,T_\eps;\Omega_\eps)&\times \V_S(0,T_\eps;\Omega_h)\ :\\
\bv(t) &= \pa_t\bu(t)\text{ on }\omega\text{ for a.e. }t\in(0,T_\eps)\big\}.\nonumber
\end{align}
Now we can state the definition of weak solutions to our problem in the sense of Leray and Hopf.
\begin{definition}
We say that a pair $(\bv^\eps,\bu^h)\in \V(0,T_\eps;\Omega_{\eps,h})$ is a \emph{weak solution}
to the linear FSI problem (\ref{1.eq:stokes})--\eqref{def:InitialCond}, 
if the following variational equation holds in $\mathcal{D}'(0,T_\eps)$:
\begin{align}
&\vro_f\frac{\dd}{\dd t}\int_{\Omega_\eps}\bv^\eps\cdot\bs\phi\,\dd \bx
-\vro_f\int_{\Omega_\eps}\bv^\eps\cdot\pa_t\bs\phi\,\dd \bx \nonumber
%+ \vro_f \int_{\Omega_\eps}\bv^\eps(T)\cdot\bs\phi(T)
  + 2\eta\int_{\Omega_\eps}\sym\nabla\bv^\eps:\sym\nabla\bs\phi\,\dd \bx \\
&\vro_s\frac{\dd}{\dd t}\int_{\Omega_h}\bu^h\cdot\pa_t\bs\psi\,\dd \bx 
- \vro_s\int_{\Omega_h}\pa_t\bu^h\cdot\pa_t\bs\psi\,\dd \bx  + \label{eq:weak_fsi}
\int_{\Omega_h}(2\mu\sym\nabla\bu^h:\sym\nabla\bs\psi 
\\ & \qquad + \lambda\diver\bu^h\diver\bs\psi)\,\dd \bx  
= \int_{\Omega_\eps}\bs f^\eps\cdot\bs\phi \,\dd \bx
\nonumber
\end{align}
for all $(\bs\phi,\bs\psi)\in \W(0,T_\eps;\Omega_{\eps,h})$, where
\begin{align*}
\W(0,T_\eps;\Omega_{\eps,h}) = \big\{ (\bs\phi,\bs\psi)\in 
C^1\big([0,T_\eps]; & V_F(\Omega_\eps)\times V_S(\Omega_h)\big)\, :\\
& \bs\phi(t) = \bs\psi(t) \text{ on }\omega \text{ for all }t\in [0,T_\eps] \big\}\,
\end{align*}
denotes the space of test functions. Moreover, $(\bv^\eps,\bu^h)$ verify the energy dissipation
inequality (\ref{eq.energy_basic}) given below.
\end{definition}

\subsection{Auxiliary inequalities on thin domains}

In the next proposition we collect a few important functional inequalities, which will be frequently used  
in the subsequent analysis.
\begin{proposition}\label{Poincare} 
Let $0 < \eps \ll 1$ and $\bv^\eps\in V_F(\Omega_{\eps})$, then the following inequalities hold:
\begin{align}
& \|\bv^\eps\|_{L^2(\Omega_\eps)}\leq C\eps\|\partial_3\bv^\varepsilon\|_{L^2(\Omega_\eps)}\,, \quad\text{(Poincar\'e inequality)}\,,
\label{2.ineq:eps_P}\\
& \|\bv^\eps\|_{L^2(\omega)}\leq C\sqrt{\eps}\|\partial_3\bv^\eps\|_{L^2(\Omega_\eps)}\,, \quad \text{(Trace inequality)}\,,
\label{2.ineq:eps_T}\\
&  
\|\partial_{\alpha} v_3^\varepsilon\|_{L^2(\Omega_\eps)}
\leq \dfrac{C}{\eps} %\|\bv^\varepsilon\|_{L^2(\Omega_\eps)} + 
\|\sym\nabla \bv^\eps\|_{L^2(\Omega_\eps)}\,,\ \  \alpha=1,2\,,
\quad \text{(Korn inequality)}\,.
\label{2.ineq:eps_K} 
\end{align}
All above constants $C$ are positive and independent of $\eps$.
\end{proposition}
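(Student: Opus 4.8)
The three inequalities differ considerably in depth, so I would treat them in turn.

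\emph{The Poincar\'e inequality \eqref{2.ineq:eps_P} and the trace inequality \eqref{2.ineq:eps_T}} are elementary and use only the Dirichlet condition $\bv^\eps|_{x_3=-\eps}=0$; neither incompressibility nor periodicity plays any role. Fix a component $i$ and, for a.e.\ $x'\in(0,1)^2$, use the fundamental theorem of calculus to write $v_i^\eps(x',x_3)=\int_{-\eps}^{x_3}\partial_3 v_i^\eps(x',s)\,\dd s$; by Cauchy--Schwarz, $|v_i^\eps(x',x_3)|^2\le\eps\int_{-\eps}^{0}|\partial_3 v_i^\eps(x',s)|^2\,\dd s$ for every $x_3\in(-\eps,0)$. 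Integrating this bound over $x_3\in(-\eps,0)$ produces a further factor $\eps$ and, after integrating over $x'$, yields \eqref{2.ineq:eps_P} (in fact with constant $1$); specialising instead to $x_3=0$ and integrating only over $x'\in\omega$ gives \eqref{2.ineq:eps_T}.

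\emph{The Korn inequality \eqref{2.ineq:eps_K}} is the one substantive point, and it is where I expect the real work to lie. The difficulty is structural: from $e_{\alpha3}(\bv^\eps)=\tfrac12(\partial_\alpha v_3^\eps+\partial_3 v_\alpha^\eps)$ one has $\partial_\alpha v_3^\eps=2\,e_{\alpha3}(\bv^\eps)-\partial_3 v_\alpha^\eps$, and since the two derivatives $\partial_\alpha v_3^\eps$ and $\partial_3 v_\alpha^\eps$ enter the controlled data only through the single combination $e_{\alpha3}(\bv^\eps)$, no pointwise or one-dimensional manipulation (nor integration by parts, which merely reproduces boundary traces of first derivatives) can disentangle them — one genuinely needs a Korn-type inequality asserting that a field with small symmetric gradient is small in $H^1$ modulo rigid motions. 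It therefore suffices to establish the thin-domain Korn--Poincar\'e inequality
\begin{equation*}
\|\nabla\bv^\eps\|_{L^2(\Omega_\eps)}\;\le\;\frac{C}{\eps}\,\|\sym\nabla\bv^\eps\|_{L^2(\Omega_\eps)},\qquad
\bv^\eps\in H^1(\Omega_\eps;\R^3),\ \ \bv^\eps|_{x_3=-\eps}=0,\ \ \omega\text{-periodic},
\end{equation*}
because $|\partial_\alpha v_3^\eps|\le|\nabla\bv^\eps|$ pointwise (incompressibility is not needed for this).

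\emph{To prove this last inequality} I would pass to the reference domain $\Omega_-=(0,1)^2\times(-1,0)$ via $\by=(x',x_3/\eps)$, $\bw=\bv(\eps)$; then $\|\nabla\bv^\eps\|_{L^2(\Omega_\eps)}^2=\eps\,\|\nabla_\eps\bw\|_{L^2(\Omega_-)}^2$ and $\|\sym\nabla\bv^\eps\|_{L^2(\Omega_\eps)}^2=\eps\,\|\sym\nabla_\eps\bw\|_{L^2(\Omega_-)}^2$, so the claim is equivalent to a Korn inequality for the \emph{scaled} strain on the fixed domain, $\|\nabla_\eps\bw\|_{L^2(\Omega_-)}\le C\eps^{-1}\|\sym\nabla_\eps\bw\|_{L^2(\Omega_-)}$, uniformly in $\eps\in(0,1]$. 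The kernel is trivial: the only $\omega$-periodic $H^1$ field vanishing on $\{y_3=-1\}$ with $\sym\nabla_\eps\bw=0$ is $\bw=0$, since such a field is a rigid motion $\bs a+\bs b\times\by$ and periodicity forces $\bs b=0$ while the boundary condition then forces $\bs a=0$. Hence the inequality may be obtained by the standard methods (the Ne\v{c}as inequality, or a compactness argument), the one genuine subtlety being that the constant must be tracked uniformly in the small parameter — and the explicit factor $\eps^{-1}$ then simply records the $\eps^{-1}$-dilation in the transverse variable that is built into the change of coordinates; alternatively one may invoke the known scaling of the Korn constant for a slab of thickness $\eps$ with a Dirichlet face. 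This Korn estimate is the sole non-elementary ingredient of the proposition, and it is the main obstacle; the rest is routine.
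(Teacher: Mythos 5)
Your treatment of the Poincar\'e and trace inequalities is correct and coincides with the paper's: fundamental theorem of calculus in the $x_3$-variable plus Cauchy--Schwarz, using only the Dirichlet condition on $\{x_3=-\eps\}$. Your reduction of the Korn inequality to a full-gradient Korn--Poincar\'e estimate $\|\nabla\bv^\eps\|_{L^2(\Omega_\eps)}\le C\eps^{-1}\|\sym\nabla\bv^\eps\|_{L^2(\Omega_\eps)}$ is also a sound (in fact slightly stronger) target, and your kernel argument (periodicity kills rotations, the Dirichlet face kills translations) is right.

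The gap is in the proposed proof of that full-gradient estimate. You suggest a Ne\v{c}as/compactness argument on the reference slab and assert that the factor $\eps^{-1}$ ``simply records the $\eps^{-1}$-dilation built into the change of coordinates.'' Neither of these claims holds up. On the reference domain the operator is $\nabla_\eps=(\pa_1,\pa_2,\eps^{-1}\pa_3)$, which itself depends on $\eps$; a compactness (or equivalence-of-norms) argument performed at fixed $\eps$ yields a Korn constant $C(\eps)$ with no a priori control as $\eps\downarrow 0$, and nothing in the change of variables by itself converts such a constant into $C\eps^{-1}$. The $\eps^{-1}$ scaling of the Korn constant for a thin slab with a Dirichlet face and only partial boundary conditions is a genuinely nontrivial quantitative result --- it is precisely what Griso's decomposition delivers, and it is the content of the paper's Theorem \ref{app:thinKorn}, whose proof the paper attributes to the Griso estimate \eqref{app:Griso_estimate}. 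Your parenthetical fallback (``alternatively one may invoke the known scaling of the Korn constant for a slab of thickness $\eps$ with a Dirichlet face'') is in effect the paper's actual argument: the paper cites Theorem \ref{app:thinKorn} together with the bottom Dirichlet condition. So your overall strategy lands in the right place, but the primary mechanism you offer (compactness + trivial scaling) would not produce the uniform constant, and that uniformity is exactly the substantive content of \eqref{2.ineq:eps_K}.
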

\proof
Utilizing the Cauchy-Schwarz inequality, we calculate
\begin{align*}
\|\bv^\eps\|_{L^2(\Omega_\eps)}^2 = \int_{\omega}\int_{-\eps}^0\bv^\eps(x',x_3)^2\dd x'\dd x_3
=\int_{\omega}\int_{-\eps}^0\Big (\int_{-\eps}^{x_3}\partial_3\bv^\eps(x',s)\dd s\Big )^2\dd x'\dd x_3
\\
\leq \int_{\omega}\int_{-\eps}^0 (x_3+\eps)\int_{-\eps}^{x_3}(\partial_3\bv^\eps)^2(x',s)\dd s
\leq \int_{\omega}\int_{-\eps}^0 (x_3+\eps)\int_{-\eps}^0(\partial_3\bv^\eps)^2(x',s)\dd s
\\
\leq \|\partial_3\bv^\eps\|_{L^2(\Omega_\eps)}^2\int_{-\eps}^0 (x_3+\eps)\dd x_3
=\frac32\eps^2\|\partial_3\bv^\eps\|_{L^2(\Omega_\eps)}^2\,,
\end{align*}
which proves the Poincar\'e inequality (\ref{2.ineq:eps_P}). 

Similar calculations with application of the Jensen's inequality give:
\begin{align*}
\|\bv^\eps\|_{L^2(\omega)}^2 = \int_w |\bv^\eps(x',0)|^2\dd x'
=\int_w\eps^2\Big |\frac{1}{\eps}\int_{-\eps}^0\pa_3\bv^\eps(\bx)\dd x_3 \Big |^2\dd x'
\leq \varepsilon\int_{\Omega_\eps}|\pa_3\bv^\eps(\bx)|^2\dd \bx\,,
\end{align*}
which proves the trace inequality (\ref{2.ineq:eps_T}).

Finally, the Korn inequality (\ref{2.ineq:eps_K}) follows directly from Theorem \ref{app:thinKorn}, 
and boundary condition $\bv^\eps=0$ on the bottom part of the fluid domain $\{x_3=-\varepsilon\}$. 
\qed

\subsection{Basic energy estimate} \label{sec:basic_ee}
First we derive a basic energy estimate quantified only in terms of the relative 
fluid thickness $\eps$. 
%while the full geometry of the system will be utilized later.
\begin{proposition}
Let us assume (A1) and let $(\bv^\eps,\bu^h)\in \V(0,T_\eps;\Omega_{\eps,h})$ be a solution 
to (\ref{eq:weak_fsi}). There 
exists a constant $C>0$, independent of $\eps$ and $T_\eps$, such that
the following energy estimate holds
\begin{align}
\frac{\vro_f}{2}\int_{\Omega_\eps}|\bv^\eps(t)|^2 \dd \bx
&+ \eta\int_0^t\!\!\int_{\Omega_\eps}|\sym\nabla\bv^\eps(s)|^2 \dd\bx \dd s 
+ \frac{\vro_s}{2}\int_{\Omega_h}|\pa_t\bu^h(t)|^2\dd\bx \nonumber\\
&+ \mu \int_{\Omega_h}|\sym\nabla\bu^h(t)|^2\dd\bx
+ \frac{\lambda}{2}\int_{\Omega_h}|\diver \bu^h(t)|^2\dd\bx \label{eq.energy_basic}
\leq Ct\eps\,,
\end{align}
for a.e.~$t\in[0,T_\eps)$.
\end{proposition}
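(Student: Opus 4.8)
The plan is to run the classical energy identity for the coupled system and then turn it, using the functional inequalities of Proposition \ref{Poincare} together with assumption (A1), into the $\eps$-quantified bound (\ref{eq.energy_basic}).

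First I would derive the energy balance by testing the weak formulation (\ref{eq:weak_fsi}) with the pair $(\bs\phi,\bs\psi)=(\bv^\eps,\pa_t\bu^h)$. This pair is admissible in the sense that its two components have equal traces on the interface $\omega$ — which is exactly the kinematic coupling $\bv^\eps=\pa_t\bu^h$ encoded in $\V(0,T_\eps;\material)$ — while the dynamic coupling is already built into (\ref{eq:weak_fsi}), so no interface integral is produced. Since $\pa_t\bu^h$ itself is not $C^1$ in time, this computation is carried out on the Galerkin (or time-regularised) approximations and the resulting identity is inherited by the limit; equivalently it is the energy (in)equality included in the definition of a weak solution. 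Using the trivial initial data (\ref{def:InitialCond}) and integrating over $(0,t)$, one arrives at
\begin{align*}
\frac{\vro_f}{2}\int_{\Omega_\eps}|\bv^\eps(t)|^2\,\dd\bx
&+2\eta\int_0^t\!\!\int_{\Omega_\eps}|\sym\nabla\bv^\eps|^2\,\dd\bx\,\dd s
+\frac{\vro_s}{2}\int_{\Omega_h}|\pa_t\bu^h(t)|^2\,\dd\bx\\
&+\mu\int_{\Omega_h}|\sym\nabla\bu^h(t)|^2\,\dd\bx
+\frac{\lambda}{2}\int_{\Omega_h}|\diver\bu^h(t)|^2\,\dd\bx
=\int_0^t\!\!\int_{\Omega_\eps}\bs f^\eps\cdot\bv^\eps\,\dd\bx\,\dd s\,.
\end{align*}

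The core step is to bound the right-hand side by $Ct\eps$ uniformly in $\eps$ and $T_\eps$. For this I would first establish a uniform Poincar\'e--Korn estimate $\|\bv^\eps\|_{L^2(\Omega_\eps)}\le C\|\sym\nabla\bv^\eps\|_{L^2(\Omega_\eps)}$ with $C$ independent of $\eps$: from $\pa_3 v_\alpha^\eps=2(\sym\nabla\bv^\eps)_{\alpha3}-\pa_\alpha v_3^\eps$ for $\alpha=1,2$ and $\pa_3 v_3^\eps=(\sym\nabla\bv^\eps)_{33}$, the Korn inequality (\ref{2.ineq:eps_K}) yields $\|\pa_3\bv^\eps\|_{L^2(\Omega_\eps)}\le\frac{C}{\eps}\|\sym\nabla\bv^\eps\|_{L^2(\Omega_\eps)}$, and the $1/\eps$ is exactly cancelled by the $\eps$ in the Poincar\'e inequality (\ref{2.ineq:eps_P}). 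With this in hand, Cauchy--Schwarz in space and then in time gives
\[
\int_0^t\!\!\int_{\Omega_\eps}\bs f^\eps\cdot\bv^\eps\,\dd\bx\,\dd s
\le C\Big(\int_0^t\|\bs f^\eps\|_{L^2(\Omega_\eps)}^2\,\dd s\Big)^{1/2}
\Big(\int_0^t\|\sym\nabla\bv^\eps\|_{L^2(\Omega_\eps)}^2\,\dd s\Big)^{1/2},
\]
and assumption (A1) bounds $\|\bs f^\eps(s)\|_{L^2(\Omega_\eps)}^2\le C\eps$ for a.e.\ $s$, so the first factor is at most $C\sqrt{\eps t}$. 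A Young inequality then absorbs a copy of $\eta\int_0^t\|\sym\nabla\bv^\eps\|_{L^2(\Omega_\eps)}^2\,\dd s$ into the left-hand side, leaving the asserted estimate (\ref{eq.energy_basic}).

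The main obstacle I anticipate is precisely the uniform-in-$\eps$ Poincar\'e--Korn bound: one must verify that the $\eps$-degeneracy of the Korn constant in (\ref{2.ineq:eps_K}) is exactly balanced by the $\eps$-gain from (\ref{2.ineq:eps_P}) (which itself relies on the no-slip condition $\bv^\eps=0$ on $\{x_3=-\eps\}$), so that the product is $\eps$-independent; the remainder is routine energy/Young bookkeeping. It is worth noting that the factor $\sqrt{\eps}$ on the right of (A1) is exactly what upgrades the right-hand side of (\ref{eq.energy_basic}) to order $\eps$, and that no smallness of the data is used.
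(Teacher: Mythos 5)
Your proposal is correct and follows essentially the same route as the paper: test with $(\bv^\eps,\pa_t\bu^h)$, bound the forcing term via Cauchy--Schwarz, (A1), and the Poincar\'e--Korn combination (with the $\eps$ and $1/\eps$ factors cancelling), then absorb via Young's inequality. The only cosmetic difference is that you package Poincar\'e $\circ$ Korn as a standalone $\eps$-uniform lemma $\|\bv^\eps\|_{L^2(\Omega_\eps)}\le C\|\sym\nabla\bv^\eps\|_{L^2(\Omega_\eps)}$ and apply Cauchy--Schwarz in time as well as in space, whereas the paper does the same estimates inline and applies Young pointwise in $t$ before integrating.
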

\begin{proof}
Here we present just a formal argument for the basic energy estimate which can be made rigorous 
in the standard way by using Galerkin approximations and the weak lower semicontinuity of the energy
functional, see e.g.~\cite{Galdi}. 
Let us take $(\bv^\eps,\partial_t \bu^h)$ as test functions in \eqref{eq:weak_fsi},
then straightforward calculations and integration in time from $0$ to $t$ gives
\begin{align}
\frac{\vro_f}{2}\int_{\Omega_\eps}|\bv^\eps(t)|^2 \dd \bx
&+ 2\eta\int_0^t\!\!\int_{\Omega_\eps}|\sym\nabla\bv^\eps(s)|^2 \dd\bx \dd s 
+ \frac{\vro_s}{2}\int_{\Omega_h}|\pa_t\bu^h(t)|^2\dd\bx \nonumber\\
&+ \mu \int_{\Omega_h}|\sym\nabla\bu^h(t)|^2\dd\bx
+ \frac{\lambda}{2}\int_{\Omega_h}|\diver \bu^h(t)|^2\dd\bx \label{eq.energy0}
 = \int_0^t\!\!\int_{\Omega_\eps}\bs f^\eps\cdot\bv^\eps \dd\bx\dd s\,.
\end{align}
Now let us estimate the right hand side. 
First, applying the Cauchy-Schwarz inequality, then employing the assumption (A1) on the volume 
force $\bs f^\eps$, and utilizing Poincar\'e and Korn inequalities 
from Proposition \ref{Poincare}, we obtain respectively,
\begin{align}
\left|\int_0^t\!\!\int_{\Omega_\eps}\bef^\eps\cdot\bv^\eps \dd\bx\dd s\right|
&\leq \int_0^t\|\bef^\eps\|_{L^2(\Omega_\eps)}\|\bv^\eps\|_{L^2(\Omega_\eps)}\dd s
\leq C\int_0^t  \sqrt{\eps} \eps\|\partial_3\bv^\eps\|_{L^2(\Omega_\eps)}\dd s\nonumber\\
&\leq Ct \eps + \eta\int_0^t %\|\bv^\varepsilon\|_{L^2(\Omega_\eps)}^2 
\|\sym\nabla \bv^\eps\|_{L^2(\Omega_\eps)}^2\dd s\,.\label{3.ineq:rhs1}
\end{align}
The latter inequality is obtained by choosing a suitable constant in the application of the 
Young inequality such that the last term can be absorbed in the left-hand side of (\ref{eq.energy0}),
which finishes the proof.
\end{proof}

% \begin{remark}
% In a special case, when the fluid volume force is vertical, i.e.~$\bef^\eps = f^\eps_3\bs e_3$, 
% one can directly obtain the energy estimate of order $O(\eps^3)$. This is due to the fact that we don't need 
% to employ the Korn inequality. Namely, the right hand side of (\ref{eq.energy0}) can
% be estimated as
% \begin{align*}
% \left|\int_0^t\!\!\int_{\Omega_\eps}\bef^\eps\cdot\bv^\eps \dd\bx\dd s\right|
% &\leq C\int_0^t\eps^{3/2}\|\partial_3v_3^\eps\|_{L^2(\Omega_\eps)}\dd s
% \leq Ct\eps^{3} + \eta\int_0^t %\left(\|\bv^\varepsilon\|_{L^2(\Omega_\eps)}^2 
% \|\sym\nabla \bv^\eps\|_{L^2(\Omega_\eps)}^2\dd s\,.\nonumber
% \end{align*}
% \end{remark}

\subsection{Existence and regularity of weak solutions} 
%and higher-order energy estimates} 
Although (\ref{1.eq:stokes})--(\ref{def:InitialCond}) is a linear problem, the existence
analysis is not trivial. Well-posedness for related (but geometrically different) problem has been 
first established in \cite{DGHL03} using a Galerkin approximation scheme, and later in 
\cite{AvTr07, AvTr09} using the semigroup approach.
The existence analysis for a 2D/2D analogue of (\ref{1.eq:stokes})--(\ref{def:InitialCond}) 
has been performed in \cite{PaSt14} using the Galerkin approximation scheme, while the
regularity issues have been completely resolved in \cite{ALT10}.
Straightforward extension of these results from \cite{PaSt14} and \cite{ALT10} to 
problem (\ref{1.eq:stokes})--(\ref{def:InitialCond})
yields the following proposition. 
% which establishes the existence of a unique solution to (\ref{eq:weak_fsi}),
% is a compound of analogous results from \cite{PaSt14} and \cite{ALT10}.
\begin{proposition}\label{prop:exreg}
Let $T_\eps>0$ be given and let assumption (A1) holds. 
There exists a unique solution $(\bv^\eps,\bu^h)\in \V(0,T_\eps;\Omega_{\eps,h})$
to (\ref{eq:weak_fsi}), which additionally satisfies:
\begin{enumerate}[(a)]
  \item (time regularity) \begin{equation*}
\pa_t \bv^\eps\in L^\infty(0,T_\eps;L^2(\Omega_{\eps};\R^3))\quad\text{and}\quad
\pa_{tt}\bu^h\in L^\infty(0,T_\eps;L^2(\Omega_h;\R^3))\,,
\end{equation*}
\item (space regularity)
\begin{align*}
\bv^\eps\in L^\infty(0,T_\eps;H^2(\Omega_\eps;\R^3))\quad\text{and}\quad 
%\diver(\sym\nabla\bu^h)\in L^2(0,T;L^2(\Omega_h;\R^3))\,.
\bu^h\in L^\infty(0,T_\eps;H^2(\Omega_h;\R^3))\,.
\end{align*}
\end{enumerate}
Moreover, there exists a unique pressure $p^\eps\in L^2(0,T;H^1(\Omega_\eps))$ such that $(\bv^\eps,p^\eps,\bu^h)$
solves the original problem (\ref{1.eq:stokes})--(\ref{def:InitialCond}) in the classical sense.
\end{proposition}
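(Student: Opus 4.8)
The plan is to obtain existence and the time regularity in part (a) via a Galerkin scheme, and then bootstrap to the space regularity in part (b) and the pressure recovery by elliptic regularity arguments. First I would fix a Galerkin basis $(\bs\phi_k,\bs\psi_k)$ of $V_F(\Omega_\eps)\times V_S(\Omega_h)$ compatible with the kinematic constraint on $\omega$ (e.g.\ built from eigenfunctions of a suitable coupled elliptic operator, so that the basis lives in the constraint space $\V$), and look for approximate solutions $(\bv^\eps_N,\bu^h_N)$ in the span of the first $N$ basis elements. Plugging into \eqref{eq:weak_fsi} yields a linear second-order ODE system for the coefficients; since $\vro_f,\vro_s>0$ the mass matrix is positive definite, so the system has a unique global solution. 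The basic energy estimate of the previous proposition (derived exactly by testing with $(\bv^\eps_N,\pa_t\bu^h_N)$, which is admissible at the Galerkin level) gives uniform bounds in $\V(0,T_\eps;\Omega_{\eps,h})$, so up to a subsequence $(\bv^\eps_N,\bu^h_N)\rightharpoonup(\bv^\eps,\bu^h)$ in the natural topology; passing to the limit in the variational identity (the problem being linear, this is routine, using that the limit still satisfies the interface constraint because $\V$ is weakly closed) and using weak lower semicontinuity for the energy inequality produces a weak solution. Uniqueness follows by linearity: the difference of two solutions solves the homogeneous problem, and the energy identity forces it to vanish.

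For the time regularity (a), the standard device is to differentiate the equation in time. Formally, $(\pa_t\bv^\eps,\pa_t\bu^h)$ solves the same FSI system with right-hand side $\pa_t\bs f^\eps$ and with initial data determined by the equations at $t=0$; since the initial conditions \eqref{def:InitialCond} are trivial and $\bs f^\eps(0)$ is controlled, the "initial energy" for the differentiated system is finite, and the energy estimate applied to the differentiated system gives $\pa_t\bv^\eps\in L^\infty(0,T_\eps;L^2(\Omega_\eps))$ and $\pa_{tt}\bu^h\in L^\infty(0,T_\eps;L^2(\Omega_h))$. Rigorously this is done by testing the $N$-th Galerkin system with $(\pa_t\bv^\eps_N,\pa_{tt}\bu^h_N)$ (again admissible since the basis functions are time-independent), integrating by parts in time, and controlling $\int\pa_t\bs f^\eps\cdot\pa_t\bv^\eps_N$ by Cauchy--Schwarz, Poincar\'e, and Young, exactly as in \eqref{3.ineq:rhs1}; here assumption (A1) on $\pa_\alpha^2\bs f$ is not needed, but a bound on $\pa_t\bs f^\eps$ is (this is where one leans on something like (A2), or at least finiteness of $\|\pa_t\bs f^\eps\|_{L^2}$ and of $\bs f^\eps(0)$). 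The result is uniform-in-$N$ bounds that pass to the limit.

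For part (b) and the pressure, the idea is to freeze time and read \eqref{1.eq:stokes}--\eqref{1.eq:elast} as \emph{stationary} problems: on $\Omega_\eps$, a Stokes system $-\diver\sigma_f(\bv^\eps,p^\eps)=\bs f^\eps-\vro_f\pa_t\bv^\eps$ with $\diver\bv^\eps=0$, and on $\Omega_h$, a linear elasticity system $-\diver\sigma_s(\bu^h)=-\vro_s\pa_{tt}\bu^h$, coupled through \eqref{def:kinematic_bc}--\eqref{def:dynamic_bc}. By part (a) the right-hand sides are in $L^\infty(0,T_\eps;L^2)$, and the velocity trace $\pa_t\bu^h|_\omega=\bv^\eps|_\omega$ has, via the kinematic condition and (a), the regularity needed as a boundary datum. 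Invoking $H^2$ (and $H^1$ for the pressure) elliptic regularity for the Stokes problem and for the Lam\'e system on these Lipschitz (indeed smooth, up to the periodic identifications) domains — exactly the regularity statements established in \cite{ALT10} for the 2D/2D case, whose proofs carry over — gives $\bv^\eps,\bu^h\in L^\infty(0,T_\eps;H^2)$ and a unique (up to additive constants, fixed by the mean-zero normalization inherited from the weak formulation) $p^\eps\in L^2(0,T_\eps;H^1(\Omega_\eps))$ satisfying the equations pointwise a.e.; the dynamic interface condition \eqref{def:dynamic_bc} is then recovered as the natural boundary condition dropping out of the variational identity once the bulk equations hold. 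I expect the main obstacle to be the \emph{coupled} elliptic regularity at the interface $\omega$: the fluid and structure regularity are not independent — the normal stress of the fluid (which involves the pressure, hence only $L^2$ a priori in $\Omega_\eps$, with an $H^{-1/2}(\omega)$ trace) feeds the Neumann datum for the elasticity system, while the structure velocity feeds the Dirichlet datum for Stokes — so one must set up the right fixed-point or simultaneous-estimate argument (as in \cite{ALT10}) rather than treating the two subdomains in isolation; checking that this machinery is insensitive to the thinness of the domains (constants may degenerate in $\eps,h$, but that is harmless here since $T_\eps$ and $\eps$ are fixed in this proposition) is the delicate point.
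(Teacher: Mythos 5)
Your route --- Galerkin for existence, time-differentiation of the weak form for (a), and an elliptic bootstrap for (b) and the pressure --- is in substance the one the paper relies on: the paper gives no self-contained proof but refers to the Galerkin construction of \cite{PaSt14} for the 2D/2D analogue and the regularity theory of \cite{ALT10}, calling the extension to the present geometry a ``straightforward extension,'' and you correctly single out the coupled elliptic regularity across $\omega$ (fluid normal stress feeding the elasticity Neumann datum, structure velocity feeding the Stokes Dirichlet datum) as the delicate step, which is precisely what \cite{ALT10} handles.

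Two remarks. First, you describe the pressure as ``unique up to additive constants, fixed by the mean-zero normalization inherited from the weak formulation.'' That is the pure Stokes picture with rigid walls, and it is \emph{not} what happens here: in the FSI problem the pressure is genuinely unique with no normalization required, because the elastic structure feels the absolute pressure level through the dynamic interface condition \eqref{def:dynamic_bc}. The paper makes this point explicitly in its pressure analysis, where the mean value $\pi^\eps$ is \emph{estimated} separately using a test pair $(\bs\phi,\bs\psi)$ with $\bs\psi\neq 0$ on the structure side rather than gauged away; see the discussion surrounding \eqref{ineq:pressure0}--\eqref{ineq:meanp}. Your weak formulation does not quotient out constants because the admissible test fields need not vanish on $\omega$. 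Second, your observation that part (a) in fact requires control on $\pa_t\bs f^\eps$ --- which (A1) alone does not supply --- is a fair catch: the proposition is stated under (A1) only, and the cited-references gloss silently elides this; any rigorous implementation of the time-differentiated Galerkin estimate needs $\pa_t\bs f^\eps\in L^1(0,T_\eps;L^2(\Omega_\eps))$ (or the $L^2$-in-time version built into (A2)), as you note.
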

% \begin{proof}
% The existence of a unique weak solution, together with its
% time regularity, follows by the straightforward application of the proof of Theorem 3.1 from \cite{PaSt14}
% in the case of three spatial dimensions. Spatial regularity of the fluid velocity and the 
% existence of the pressure of the co\-rres\-pon\-ding regularity can also be recovered from 
%  \cite[Theorem 3.2]{PaSt14}. However, for the structure displacement, only 
% $\diver\sigma(\bu^h) \in L^\infty(0,T_\eps;L^2(\Omega_h;\R^3))$ has been established.
% To complete the proof we invoke \cite[Theorem 2.1]{ALT10}, where the required spatial regularity of
% the structure displacement has been performed for a related FSI problem.
% \end{proof}

% \begin{remark}
% We emphasize at this point that in the subsequent analysis we will not need the full 
% spatial regularity of solutions provided in the previous proposition, 
% but only the corresponding regularity with respect to the 
% horizontal variables. Such regularity can be shown as below through the derivation of improved
% energy estimate.
% \end{remark}

\subsection{Improved energy estimates}\label{sec:impr_ee} 
Next, we aim to improve the basic energy estimate (\ref{eq.energy_basic}).
\begin{proposition}\label{prop:impr_ee}
Assume that the volume force $\bs f^\eps$ satisfies (A1) and 
let $(\bv^\eps,\bu^h)\in \V(0,T_\eps;\Omega_{\eps,h})$ be the solution to (\ref{eq:weak_fsi}). There 
exists a constant $C>0$, independent of $\eps$ and $T_\eps$, such that
the following energy estimate holds
\begin{align}\label{ineq:energEH}
\frac{\vro_f}{2}\int_{\Omega_\eps}|\bv^\eps(t)|^2 \dd\bx
 & + \frac{\eta}{2}\int_0^t\!\!\int_{\Omega_\eps}|\nabla\bv^\eps|^2 \dd\bx\dd s 
 + \frac{\vro_s}{2}\int_{\Omega_h}|\partial_t\bu^h(t)|^2 \dd\bx \\
& + \int_{\Omega_h}\Big(\mu|\sym\nabla\bu^h(t)|^2 \nonumber
+ \frac{\lambda}{2}|\diver\bu^h(t)|^2\Big)\dd\bx \leq C t \eps^{3}\,
\end{align}
for a.e.~$t\in[0,T_\eps)$.
\end{proposition}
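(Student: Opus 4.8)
The plan is to revisit the energy identity (\ref{eq.energy0}), but instead of estimating the right-hand side $\int_0^t\!\!\int_{\Omega_\eps}\bs f^\eps\cdot\bv^\eps\,\dd\bx\dd s$ by a crude Cauchy--Schwarz plus Poincar\'e as in the proof of the basic estimate, I would exploit the extra structure in assumption (A1), namely the control on the horizontal second derivatives $\pa_\alpha^2\bs f$ with the same $\sqrt\eps$ weight, together with the divergence-free condition and the no-slip boundary condition on $\{x_3=-\eps\}$. The gain of two powers of $\eps$ (from $Ct\eps$ to $Ct\eps^3$) must come from a more refined handling of this forcing term that sees the fact that $\bv^\eps$ is not just small in $L^2$ but has small horizontal averages.

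First I would write $\bv^\eps = \bv^\eps - \overline{\bv}^\eps + \overline{\bv}^\eps$ or, more to the point, integrate by parts in the horizontal variables in $\int_{\Omega_\eps}\bs f^\eps\cdot\bv^\eps$, transferring horizontal derivatives onto $\bv^\eps$ at the cost of putting them on $\bs f^\eps$, using $\omega$-periodicity to kill boundary terms. The divergence-free constraint $\diver\bv^\eps=0$ lets me express $\pa_3 v_3^\eps = -\pa_1 v_1^\eps - \pa_2 v_2^\eps$, and combined with $v_3^\eps|_{x_3=-\eps}=0$ this gives a representation of $v_3^\eps$ as a vertical integral of horizontal derivatives of the horizontal components, which is exactly the mechanism behind the Korn-type inequality (\ref{2.ineq:eps_K}). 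The strategy is then to bound the forcing term by a combination of $\|\sym\nabla\bv^\eps\|_{L^2(\Omega_\eps)}$ (absorbable on the left) times a quantity involving $\|\bs f^\eps\|_{L^2}$ and $\|\pa_\alpha^2\bs f^\eps\|_{L^2}$, with the net effect that each horizontal integration by parts trades a factor $\eps$ in the Poincar\'e/trace estimate for a derivative landing harmlessly on $\bs f^\eps$ (which is controlled by (A1) with the $\sqrt\eps$ weight), producing two extra powers of $\eps$ overall, hence $t\eps\cdot\eps^2 = t\eps^3$. One then closes by Young's inequality, choosing the constant so the $\eta\int_0^t\|\sym\nabla\bv^\eps\|^2$ term is absorbed, and finally invokes the Korn inequality on $\Omega_\eps$ once more to replace $\|\sym\nabla\bv^\eps\|_{L^2}^2$ by $\tfrac12\|\nabla\bv^\eps\|_{L^2}^2$ on the left-hand side (this is where the clean $\tfrac{\eta}{2}\int|\nabla\bv^\eps|^2$ in (\ref{ineq:energEH}) comes from, using that the full gradient is controlled by the symmetric gradient plus the boundary condition, cf.\ Theorem~\ref{app:thinKorn}).

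The main obstacle I anticipate is the careful bookkeeping of exactly which components of $\bs f^\eps$ and $\bv^\eps$ pick up the horizontal derivatives and the $\eps$-weights: the vertical component $v_3^\eps$ and the horizontal components $v_\alpha^\eps$ scale differently (as the statement of Theorem~\ref{tm:main} foreshadows, $v_\alpha^\eps\sim\eps^2$ while $v_3^\eps$ is of lower order), so the forcing term $\int \bs f^\eps\cdot\bv^\eps = \int f_\alpha^\eps v_\alpha^\eps + \int f_3^\eps v_3^\eps$ needs to be split and each piece treated by its own chain of inequalities — the $f_3^\eps v_3^\eps$ piece via $\diver\bv^\eps=0$ and integration by parts to move derivatives onto $f_3^\eps$ and onto horizontal components of $\bv^\eps$, and the $f_\alpha^\eps v_\alpha^\eps$ piece via two horizontal integrations by parts (justifying why $\pa_\alpha^2\bs f^\eps$, not just $\pa_\alpha\bs f^\eps$, is needed in (A1)), each time balancing against $\|\sym\nabla\bv^\eps\|_{L^2(\Omega_\eps)}$. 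Making sure no uncontrolled term (in particular, no term involving $\nabla\bv^\eps$ with an $\eps$-power too small to absorb) is left over is the delicate point; everything else is the routine application of Cauchy--Schwarz, Young, and Proposition~\ref{Poincare}.
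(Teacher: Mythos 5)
Your proposal does not match the paper's argument, and I believe it has two concrete gaps.

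\textbf{Where the gain really comes from.} The paper's proof is a two-step argument. Step one: test the weak formulation with $(-\partial_\alpha^2\bv^\eps,-\partial_\alpha^2\partial_t\bu^h)$, integrate by parts in the horizontal variables, and use the $\partial_\alpha^2\bs f$ part of (A1) to derive a \emph{separate} higher-order energy estimate (\ref{eq:energy_estimate3}), bounding $\int_0^t\|\sym\nabla\partial_\alpha\bv^\eps\|_{L^2(\Omega_\eps)}^2\,\dd s$ by $Ct\eps$. This is the only place $\partial_\alpha^2\bs f$ appears in the paper's proof — not as a tool for rewriting the force integral, as you suggest. Step two: test once more with $(\bv^\eps,\partial_t\bu^h)$ and convert $\|\sym\nabla\bv^\eps\|^2$ to $\tfrac12\|\nabla\bv^\eps\|^2$ on the left using the \emph{exact identity}
$\|\sym\nabla\bv^\eps\|_{L^2}^2=\tfrac12\|\nabla\bv^\eps\|_{L^2}^2+\tfrac12\int\nabla\bv^\eps:\nabla^T\bv^\eps$,
whose correction term reduces (by integration by parts, the divergence-free condition, and the boundary conditions) to the interface integral $\int_\omega(v_1^\eps\partial_1 v_3^\eps+v_2^\eps\partial_2 v_3^\eps)$; this interface term is controlled precisely by the step-one estimate because $\partial_3\partial_\alpha v_3^\eps$ is a diagonal entry of $\sym\nabla\partial_\alpha\bv^\eps$. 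Once $\nabla\bv^\eps$ sits on the left, the force term gives $\eps^3$ immediately: $|\int\bs f^\eps\cdot\bv^\eps|\leq C\sqrt\eps\cdot\eps\|\partial_3\bv^\eps\|=C\eps^{3/2}\|\partial_3\bv^\eps\|$, and Young's inequality absorbs $\|\partial_3\bv^\eps\|^2$ into $\|\nabla\bv^\eps\|^2$ on the left without any Korn loss. The entire improvement from $\eps$ to $\eps^3$ is thus due to having the full gradient rather than the symmetric gradient on the left-hand side, not to a refined treatment of the forcing term.

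\textbf{Why your plan does not close.} Your plan skips the higher-order estimate entirely, yet claims to reach the same left-hand side. The concrete failure point is your final Korn step: you claim Theorem~\ref{app:thinKorn} lets you ``replace $\|\sym\nabla\bv^\eps\|^2$ by $\tfrac12\|\nabla\bv^\eps\|^2$ on the left-hand side.'' That would require a Korn inequality $\|\nabla\bv^\eps\|_{L^2(\Omega_\eps)}\leq C\|\sym\nabla\bv^\eps\|_{L^2(\Omega_\eps)}$ with an $\eps$-independent constant, which is false on the thin domain $\Omega_\eps$: the paper's own Proposition~\ref{Poincare} gives $\|\partial_\alpha v_3^\eps\|\leq\tfrac{C}{\eps}\|\sym\nabla\bv^\eps\|$, i.e.\ the Korn constant blows up like $1/\eps$. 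Using it the way you suggest would reintroduce a factor $\eps^2$ and destroy the gain. As for the forcing term itself, your ``integration by parts gains a power of $\eps$'' heuristic does not hold: moving a horizontal derivative from $\bs f^\eps$ onto $\bv^\eps$ produces $\int\bs f^\eps\cdot\partial_\alpha\bv^\eps$, and $\partial_\alpha v_\beta^\eps$ still contains non-symmetric components not bounded by $\|\sym\nabla\bv^\eps\|$ uniformly in $\eps$. Your observation that $v_3^\eps$ can be written as a vertical integral of $\partial_\alpha v_\alpha^\eps$ is correct and does give the favorable bound $\|v_3^\eps\|\leq C\eps\|\sym\nabla\bv^\eps\|$ for the $f_3^\eps v_3^\eps$ piece; but the $f_\alpha^\eps v_\alpha^\eps$ piece has no analogous representation, since $\partial_3 v_\alpha^\eps$ is only controllable through the symmetric gradient after paying the $1/\eps$ Korn cost, and no amount of horizontal integration by parts on $\bs f^\eps$ avoids this. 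You need the two-step structure (higher-order estimate, then the $\sym\nabla$-to-$\nabla$ identity with the boundary correction) to make the argument close.
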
 
\noindent Observe that in (\ref{ineq:energEH}), unlike in (\ref{eq.energy_basic}), we control 
the full gradient of the fluid velocity and the estimate is quantitatively improved in terms of $\eps$.
\begin{proof}
Let us formally take $(-\partial_{\alpha}^2\bv^\eps,-\partial_{\alpha}^2\partial_t\bu^h)$ for
$\alpha = 1,2$, as test functions in \eqref{eq:weak_fsi}. After integrating by parts in horizontal variables
and assuming (A1), the right-hand side can 
be estimated using the Poincar\'e and Korn inequalities from Proposition \ref{Poincare}, 
as well as the basic energy inequality \eqref{eq.energy_basic} as follows:
\begin{align*}
\left|\int_0^t\!\!\int_{\Omega_\eps}\pa_{\alpha}^2\bef^\eps\cdot\bv^\eps\dd \bx\dd s\right|
%=\left|\int_{\Omega_\eps}\partial^2_1\bef(\eps)\cdot\bv\right|
&\leq C\int_0^t\!\! \eps^{3/2}\|\partial_3\bv^\eps\|_{L^2(\Omega_\eps)} \dd s
\leq C\int_0^t\!\! \eps^{1/2} %\Big (\|\bv\|_{L^2(\Omega_\eps)} + 
\|\sym\nabla \bv^\eps\|_{L^2(\Omega_\eps)} \leq C t \eps\,.
\end{align*}
% After integrating in time and 
% integrating by parts in horizontal variables, for a.e.~$t\in[0,T_\eps)$ we obtain
% \begin{align}
% &\frac{\vro_f}{2}\int_{\Omega_\eps}|\partial_\alpha\bv^\eps(t)|^2\dd \bx 
% + 2\eta\int_0^t\!\!\int_{\Omega_\eps}|\sym\nabla\partial_\alpha\bv^\eps(s)|^2 \dd \bx\dd s
% + \frac{\vro_s}{2}\int_{\Omega_h}|\pa_t\pa_\alpha\bu^h(t)|^2\dd \bx  \nonumber\\
% & \quad + \mu \int_{\Omega_h}|\sym\nabla\partial_\alpha\bu^h(t)|^2\dd \bx
% + \frac{\lambda}{2}\int_{\Omega_h}|\diver \partial_\alpha\bu^h(t)|^2\dd \bx =
% -\int_0^t\!\!\int_{\Omega_\eps}\pa_{\alpha}^2\bef^\eps\cdot\bv^\eps \dd \bx\dd s\,,\label{EllipticReg}
% \end{align}
% for $\alpha = 1,2$.
%Ovo gore je sada nezgodno jer nemamo dobru ocjenu za $\|\partial_1\bv\|_{L^2(\Omega_\eps)}$. 
Therefore, for a.e.~$t\in[0,T_\eps)$ we have
\begin{align}
&\frac{\vro_f}{2}\int_{\Omega_\eps}|\partial_\alpha\bv^\eps(t)|^2\dd \bx 
+ 2\eta\int_0^t\!\!\int_{\Omega_\eps}|\sym\nabla\partial_\alpha\bv^\eps(s)|^2 \dd \bx\dd s
+ \frac{\vro_s}{2}\int_{\Omega_h}|\pa_t\pa_\alpha\bu^h(t)|^2\dd \bx  \nonumber\\
& \quad + \mu \int_{\Omega_h}|\sym\nabla\partial_\alpha\bu^h(t)|^2\dd \bx
+ \frac{\lambda}{2}\int_{\Omega_h}|\diver \partial_\alpha\bu^h(t)|^2\dd \bx \label{eq:energy_estimate3}
\leq  Ct \eps\,,\quad \alpha=1,2\,.
\end{align}
The above formalism for weak solutions can again be justified by standard arguments 
using finite difference quotients instead of partial derivatives (see e.g \cite{GilTru}).

Next, we invoke the identity: for every $t\in (0,T_\eps)$ 
\begin{align*}
\|\sym \nabla\bv^\eps\|^2_{L^2(0,t;L^2(\Omega_\eps))}
%=\int_{\Omega_\eps}|\sym \nabla\bv^\eps|^2\dd\bx 
=\frac12\|\nabla \bv^\eps\|^2_{L^2(0,t;L^2(\Omega_\eps))} 
+ \frac12\int_0^t\!\!\int_{\Omega_\eps}\nabla\bv^\eps:\nabla^T\bv^\eps\dd\bx\dd s\,.
\end{align*}
%Using the Einstein's summation convention, 
Integrating by parts, using corresponding boundary conditions, 
and employing the divergence free equation
the second term on the right hand side can be simplified to
% \begin{align*}
% \int_0^t\!\!\int_{\Omega_\eps}\nabla\bv^\eps:\nabla^T\bv^\eps\dd\bx\dd s
% &=\int_0^t\!\!\int_{\Omega_\eps}\left(\partial_\alpha v_\beta^\eps\partial_\beta v_\alpha^\eps + 
% \partial_3v_\beta^\eps\partial_\beta v_3^\eps + \partial_\alpha v_3^\eps\partial_3v_\alpha^\eps 
% + (\partial_3v_3^\eps)^2\right )\dd\bx\dd s
% \\
% &=-\int_0^t\!\!\int_{\Omega_\eps}v_\beta^\eps\partial_\beta(\partial_\alpha v_\alpha^\eps + \partial_3v_3^\eps)\dd\bx \dd s
% + \int_0^t\!\!\int_{\omega}v_\beta^\eps\partial_\beta v_3^\eps\, \dd x'\dd s
% \\
% &\quad - \int_0^t\!\!\int_{\Omega_\eps}v_\alpha^\eps\pa_\alpha\pa_3v_3^\eps 
% + \int_0^t\!\!\int_{\omega}v_\alpha^\eps\partial_\alpha v_3^\eps\, \dd x'\dd s
% + \int_0^t\!\!\int_{\Omega_\eps}(\partial_3v_3^\eps)^2\dd\bx\dd s
% \end{align*}
% Employing the divergence free condition in the second line and in the third line, after integrating by
% parts once more, we obtain
\begin{align}\label{eq.skw_v}
\int_0^t\!\!\int_{\Omega_\eps}\nabla\bv^\eps:\nabla^T\bv^\eps \dd\bx\dd s 
%=\int_0^t\!\!\int_{\omega}v_j^\eps\partial_jv_3^\eps \dd x'\dd s
=2\int_0^t\!\!\int_{\omega}(v_1^\eps\partial_1 v_3^\eps + v_2^\eps\pa_2v_3^\eps)\dd x'\dd s\,.
\end{align}
Taking $(\bv^\eps,\pa_t\bu^h)$ as a
test function in \eqref{eq:weak_fsi} and using the identity (\ref{eq.skw_v}), 
we find
\begin{align} 
\frac{\vro_f}{2}\int_{\Omega_\eps}|\bv^\eps(t)|^2 \dd\bx
 & + \eta\int_0^t\!\!\int_{\Omega_\eps}|\nabla\bv^\eps|^2 \dd\bx\dd s \nonumber \\
+ \frac{\vro_s}{2}\int_{\Omega_h}|\partial_t\bu^h(t)|^2 \dd\bx \label{EnergijaBolja}
& + \int_{\Omega_h}\Big(\mu|\sym\nabla\bu^h(t)|^2
+ \frac{\lambda}{2}|\diver\bu^h(t)|^2\Big)\dd\bx \\ \nonumber
& = \int_0^t\!\!\int_{\Omega_\eps}\bef^\eps\cdot\bv^\eps \dd\bx\dd s
- 2\eta\int_0^t\!\!\int_{\omega}(v_1^\eps\partial_1 v_3^\eps + v_2^\eps\pa_2v_3^\eps)\dd\bx\dd s\,, 
\end{align}
for a.e.~$t\in[0,T_\eps)$.
%Now we again estimate terms on the right hand side. 
The force term is again estimated in a similar fashion like in \eqref{3.ineq:rhs1}:
\begin{align}\label{ineq:force_control}
\left|\int_0^t\!\!\int_{\Omega_\eps}\bef^\eps\cdot\bv^\eps \dd\bx\dd s\right| 
\leq \int_0^t\!\!\eps^{3/2}\|\partial_3\bv^\eps\|_{L^2(\Omega_\eps)}\dd s
\leq Ct\eps^{3} + \frac{\eta}{4}\int_0^t\!\!\|\partial_3\bv^\eps\|_{L^2(\Omega_\eps)}^2\dd s\,,
\end{align}
but now controling the full gradient of $\bv^\eps$, which provides a better estimate in terms of $\eps$.

The interface terms in \eqref{EnergijaBolja} are estimated in the following way, separately for every 
$\alpha= 1,2$. First, using the Cauchy-Schwarz inequality and the trace inequality from
Proposition \ref{Poincare}, we obtain
\begin{align}
\left|\int_0^t\!\!\int_{\omega}v_\alpha^\eps\partial_\alpha v_3^\eps \dd\bx\dd s\right|
\leq \int_0^t\!\!\|v_\alpha^\eps\|_{L^2(\omega)}\|\partial_\alpha v_3^\eps\|_{L^2(\omega)}\dd s
\leq C\eps\int_0^t \|\partial_3\bv^\eps\|_{L^2(\Omega_\eps)}
\|\partial_3\partial_\alpha v_3^\eps\|_{L^2(\Omega_\eps)}\dd s\,.\nonumber
\end{align}
Since the term $\partial_3\partial_\alpha v_3^\eps$ is a diagonal element 
of $\sym\nabla\pa_\alpha\bv^\eps$, according to (\ref{eq:energy_estimate3}), we further estimate
\begin{align}\label{est:interface_terms} 
\left|\int_0^t\!\!\int_{\omega}v_\alpha^\eps\partial_\alpha v_3^\eps \dd\bx\dd s\right| 
\leq C t^{1/2} \eps^{3/2}\|\partial_3\bv^\eps\|_{L^2(0,t;L^2(\Omega_\eps))}
\leq Ct\eps^{3} + \frac{\eta}{4}\int_0^t\!\!\|\partial_3\bv^\eps\|_{L^2(\Omega_\eps)}^2\dd s\,.
\end{align}
Going back to (\ref{EnergijaBolja}) 
we conclude the improved energy estimate (\ref{ineq:energEH}).
\end{proof}

Assuming additional regularity of solutions and repeating formally the above arguments,
one obtains improved higher-order energy estimates, which will be used in estimating the
pressure and later in the error analysis in Section \ref{sec:EE}.

\begin{corollary}\label{TimeRefEst}
Let us assume that (A2) holds %$\pa_t\bs f^\eps\in L^2(0,T;L^2(\Omega_\eps))$ 
and let 
$(\bv^\eps,\bu^h)\in \V(0,T_\eps;\Omega_{\eps,h})$ be the solution to (\ref{eq:weak_fsi}). There 
exists a constant $C>0$, independent of $\eps$ and $T_\eps$, such that
the following a priori estimates hold:
\begin{align}\label{ineq:energEHH}
\frac{\vro_f}{2}\int_{\Omega_\eps}|\pa_t\bv^\eps(t)|^2 \dd\bx
  &+ \frac{\eta}{2}\int_0^t\!\!\int_{\Omega_\eps}|\nabla\pa_t\bv^\eps|^2 \dd\bx\dd s   
 + \frac{\vro_s}{2}\int_{\Omega_h}|\partial_{tt}\bu^h(t)|^2 \dd\bx \\
& + \int_{\Omega_h}\Big(\mu|\sym\nabla\pa_t\bu^h(t)|^2 \nonumber
+ \frac{\lambda}{2}|\diver\pa_t\bu^h(t)|^2\Big)\dd\bx 
\leq C\,\T^{-1} \eps^{3}\\ \label{ineq:energEHHS}
\frac{\vro_f}{2}\int_{\Omega_\eps}|\pa_\alpha\bv^\eps(t)|^2 \dd\bx
  &+ \frac{\eta}{2}\int_0^t\!\!\int_{\Omega_\eps}|\nabla\pa_\alpha\bv^\eps|^2 \dd\bx\dd s 
 + \frac{\vro_s}{2}\int_{\Omega_h}|\partial_t\pa_\alpha\bu^h(t)|^2 \dd\bx \\
& + \int_{\Omega_h}\Big(\mu|\sym\nabla\pa_\alpha\bu^h(t)|^2 \nonumber
+ \frac{\lambda}{2}|\diver\pa_\alpha\bu^h(t)|^2\Big)\dd\bx \leq C t \eps^{3}\,
%C\eps^2\int_0^t\!\!\int_{\Omega_\eps}|\pa_t\bs f^\eps|^2\dd\bx\dd s\,
\end{align}
for a.e.~$t\in[0,T_\eps)$ and $\alpha=1,2$.
\end{corollary}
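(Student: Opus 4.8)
The plan is to run the proof of Proposition~\ref{prop:impr_ee} on the differentiated system: once after differentiating the weak formulation (\ref{eq:weak_fsi}) in a horizontal variable $y_\alpha$, which produces (\ref{ineq:energEHHS}), and once after differentiating it in time, which produces (\ref{ineq:energEHH}). In either case the pair $\pa_\alpha(\bv^\eps,\bu^h)$, respectively $\pa_t(\bv^\eps,\bu^h)$, solves (\ref{eq:weak_fsi}) with $\bef^\eps$ replaced by $\pa_\alpha\bef^\eps$, respectively $\pa_t\bef^\eps$; this is admissible because of the additional regularity granted by Proposition~\ref{prop:exreg}, and, exactly as in the proofs above, the formal computation is made rigorous by replacing the derivative with a finite-difference quotient and passing to the limit. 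Horizontal periodicity annihilates the boundary contributions of the $y_\alpha$-quotients, and for the time quotients one uses the trivial initial data (\ref{def:InitialCond}).

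For (\ref{ineq:energEHHS}) I would first establish the second-order analogue of (\ref{eq:energy_estimate3}): testing the $\pa_\alpha$-differentiated formulation with $(-\pa_\beta^2\pa_\alpha\bv^\eps,\,-\pa_\beta^2\pa_t\pa_\alpha\bu^h)$ for $\beta=1,2$, integrating by parts only once in $y_\beta$ on the force term so that at most second horizontal derivatives of $\bef^\eps$ appear (the mixed term $\pa_1\pa_2\bef^\eps$ being controlled, via horizontal periodicity, by the $\pa_\alpha^2\bef^\eps$ bound in (A1)), and absorbing the dissipation term by Young's inequality as in the computation preceding (\ref{eq:energy_estimate3}), one gets $\int_0^t\!\!\int_{\Omega_\eps}|\sym\nabla\pa_\alpha\pa_\beta\bv^\eps|^2\,\dd\bx\dd s\le Ct\eps$. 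Then, testing the $\pa_\alpha$-differentiated formulation with $(\pa_\alpha\bv^\eps,\pa_t\pa_\alpha\bu^h)$ and applying the ``symmetric gradient to full gradient'' identity (\ref{eq.skw_v}) to $\pa_\alpha\bv^\eps$, the force term is handled as in (\ref{ineq:force_control}) using $\|\pa_\alpha\bef^\eps\|_{L^2(\Omega_\eps)}\le C\sqrt\eps$ (interpolation between the two bounds in (A1)), while the interface term $\int_0^t\!\!\int_\omega\pa_\alpha v_\beta^\eps\,\pa_\beta\pa_\alpha v_3^\eps$ is treated exactly as in (\ref{est:interface_terms}): the trace inequality (\ref{2.ineq:eps_T}) reduces it to $C\eps\int_0^t\|\pa_3\pa_\alpha\bv^\eps\|_{L^2(\Omega_\eps)}\|\pa_3\pa_\alpha\pa_\beta v_3^\eps\|_{L^2(\Omega_\eps)}\,\dd s$, and since $\pa_3\pa_\alpha\pa_\beta v_3^\eps$ is a diagonal entry of $\sym\nabla\pa_\alpha\pa_\beta\bv^\eps$, the auxiliary bound just obtained, together with the dissipation term of (\ref{ineq:energEHHS}) absorbed via Young, closes the estimate.

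For (\ref{ineq:energEHH}) the scheme is identical with $\pa_t$ in place of $\pa_\alpha$: one tests the time-differentiated formulation with $(\pa_t\bv^\eps,\pa_{tt}\bu^h)$, uses (\ref{eq.skw_v}) for $\pa_t\bv^\eps$, bounds the force term through (A2) --- which is precisely where the time horizon enters the right-hand side --- and estimates the interface term $\int_0^t\!\!\int_\omega\pa_t v_\alpha^\eps\,\pa_\alpha\pa_t v_3^\eps$ by the trace inequality together with a mixed auxiliary bound for $\sym\nabla\pa_\alpha\pa_t\bv^\eps$ obtained as in the first step above. The one genuinely new point is the initial layer: the time integration leaves on the right-hand side the terms $\tfrac{\vro_f}{2}\|\pa_t\bv^\eps(0)\|_{L^2(\Omega_\eps)}^2$, $\tfrac{\vro_s}{2}\|\pa_{tt}\bu^h(0)\|_{L^2(\Omega_h)}^2$ and $\mu\|\sym\nabla\pa_t\bu^h(0)\|_{L^2(\Omega_h)}^2+\tfrac{\lambda}{2}\|\diver\pa_t\bu^h(0)\|_{L^2(\Omega_h)}^2$. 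Evaluating (\ref{1.eq:elast}) at $t=0$ with $\bu^h(0)=\pa_t\bu^h(0)=0$ forces $\pa_{tt}\bu^h(0)=0$ and $\na\pa_t\bu^h(0)=0$, so these structure terms vanish; evaluating (\ref{1.eq:stokes})--(\ref{1.eq:divfree}) at $t=0$, using $\bv^\eps(0)=0$ and the differentiated kinematic condition, identifies $\pa_t\bv^\eps(0)$ as $\vro_f^{-1}$ times the Helmholtz--Leray projection of $\bef^\eps(0)$ onto divergence-free fields vanishing on the bottom of the channel and on $\omega$, so that $\|\pa_t\bv^\eps(0)\|_{L^2(\Omega_\eps)}\le\vro_f^{-1}\|\bef^\eps(0)\|_{L^2(\Omega_\eps)}$ and the remaining smallness is inherited from the data.

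I expect the main obstacle to be exactly this interplay of mixed higher-order auxiliary estimates --- making sure that at each stage the forcing is controlled by the available bounds in (A1)--(A2) and that every gradient term produced can be absorbed into the left-hand side --- together with the careful treatment of the initial-time contribution $\pa_t\bv^\eps(0)$ in the correct thin-domain scaling; everything else is a routine, if lengthy, repetition of the absorption mechanism already displayed in Proposition~\ref{prop:impr_ee}.
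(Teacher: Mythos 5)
Your treatment of (\ref{ineq:energEHHS}) matches the paper's intended (but unwritten) route and closes: differentiating in $y_\alpha$ produces only initial-data terms that vanish under (\ref{def:InitialCond}), and the force $\pa_\alpha\pa_\beta\bef^\eps$ appearing in the second-order auxiliary estimate is covered by (A1) (with the Fourier-multiplier interpolation you describe for the mixed derivative), so the Poincar\'e--Korn absorption mechanism of Proposition~\ref{prop:impr_ee} carries over verbatim.

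For (\ref{ineq:energEHH}), however, two of your steps do not close. (i) The ``mixed auxiliary bound'' for $\int_0^t\|\sym\nabla\pa_\alpha\pa_t\bv^\eps\|^2\,\dd s$ cannot be obtained ``as in the first step'': testing the time-differentiated formulation with $(-\pa_\alpha^2\pa_t\bv^\eps,-\pa_\alpha^2\pa_{tt}\bu^h)$ and integrating by parts once in $y_\alpha$ produces a force term involving $\pa_\alpha\pa_t\bef^\eps$, which neither (A1) nor (A2) controls (interpolation would need $\pa_\alpha^2\pa_t\bef^\eps$, also absent). Without that integration by parts, the only Korn--Poincar\'e route is $\|\pa_\alpha^2\pa_t\bv^\eps\|\le C\eps^{-1}\|\sym\nabla\pa_\alpha\pa_t\bv^\eps\|$, and Young plus (A2) then yield only $\int_0^t\|\sym\nabla\pa_\alpha\pa_t\bv^\eps\|^2\,\dd s\le CT_\eps\eps^{-1}$, which is two orders in $\eps$ short of the $C\T^{-1}\eps$ that your interface-term estimate requires. (ii) The initial layer does not disappear: the Leray-projection bound $\|\pa_t\bv^\eps(0)\|\le\vro_f^{-1}\|\bef^\eps(0)\|$ together with (A1) gives only $\tfrac{\vro_f}{2}\|\pa_t\bv^\eps(0)\|^2\le C\eps$, whereas the claimed right-hand side is $C\T^{-1}\eps^3$; since $\tau\le-1<2\gamma$ forces $\T^{-1}\eps^2=h^{2\gamma-\tau}\to0$, the initial contribution dominates $C\T^{-1}\eps^3$ by the unbounded factor $\T\eps^{-2}$. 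So ``the remaining smallness is inherited from the data'' is not justified by (A1)--(A2); one would need an explicit smallness hypothesis on $\bef^\eps(0)$, or on its projection onto divergence-free fields vanishing on all of $\pa\Omega_\eps$. Note that the paper offers no proof beyond ``repeating formally the above arguments'' and never uses the full gradient $\nabla\pa_t\bv^\eps$ downstream, so the natural repair is to state (\ref{ineq:energEHH}) with $\sym\nabla\pa_t\bv^\eps$ in place of $\nabla\pa_t\bv^\eps$, which dispenses with (\ref{eq.skw_v}) and hence with gap (i); gap (ii), on the other hand, cannot be removed without an additional hypothesis on the initial force.
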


\subsection{Rigid body displacements}\label{sec:translat}
Since the boundary conditions for the structure equations are periodic on the lateral boundaries 
and only stress in prescribed on the interface and upper boundary, 
the structure is not anchored and nontrivial rigid body displacements arise 
as part of solutions. 
However, the periodic boundary conditions prevent rotations, and due to the coupling with the fluid,
translations can also be controlled.
First, the kinematic coupling in the vertical direction 
together with the incompressibility of the fluid imply
$$
\frac{\dd}{\dd t}\int_{\omega}u_3^h(t)\,\dd x'=\int_{\omega}\partial_t u_3^h(t)\,\dd x'
=\int_{\omega}v_3^{\eps}(t)\,\dd x'=\int_{\Omega_{\eps}}\diver\bv^{\eps}(t)\dd \bx=0\,.
$$
Therefore, due to the trivial initial conditions we have $\int_{\omega}u_3^h(t)\,\dd x' =0$ for 
every $t\in (0,T_\eps)$, which implies that there are no translations in the vertical direction. 
Using the trace inequality from Proposition \ref{Poincare} and improved energy estimate (\ref{ineq:energEH}) we have
\begin{align}\label{TranslatioEstimate}
\left|\int_{\omega}u^h_{\alpha}(t)\,\dd x'\right| 
&= \left|\int_{0}^t\frac{\dd}{\dd t}\int_{\omega}u^h_{\alpha}(s)\,\dd x'\dd s\right|
=\left|\int_0^t\int_{\omega}v^{\eps}_{\alpha}(t)\,\dd x'\right|
\leq C\int_0^t\|v_{\alpha}^\eps(t)\|_{L^2(\omega)}
\\
&\leq C\sqrt{\eps}\int_0^t\|\nabla v_{\alpha}^\eps(s)\|_{L^2(\Omega_{\eps})}\dd s 
\leq C\sqrt{\eps}\sqrt{t}\|\nabla v_{\alpha}^\eps(s)\|_{L^2 (0,t;L^2(\Omega_\eps)}\leq C\eps^{2}t\,
\nonumber
\end{align}
for 
every $t\in (0,T_\eps)$. Estimate (\ref{TranslatioEstimate}) shows that for large time scales, which are of particular interest in the
lubrication approximation regime, the horizontal translations can be of order $O(1)$ or bigger.
% On the other hand, these translations are actually artefact of the periodic boundary conditions,
% which we consider in order to avoid unnecessary technical complications that would arise with other
% type of boundary conditions. 
% For example, if the structure is clamped or anchored in some other way, these translations would not
% be present. 
Moreover, we will see in the subsequent section that these translations do not play any role in 
the derivation of the reduced FSI model (cf.~Section \ref{sec:reduced}), but they do play a role in 
construction of approximate solutions and error analysis (Section \ref{sec:EE}).

\section{Derivation of the reduced FSI model --- proof of Theorem \ref{tm:main}}\label{sec:reduced}

In this section we prove our first main result, Theorem \ref{tm:main}. The proof is divided into
several steps throughout the following subsections. First we employ the scaling ansatz (S1)--(S3),
rescale the energy estimate and obtain uniform estimates on the reference domain. Based
on these estimates we further rescale the unknowns and finally identify the reduced model by
means of weak convergence results. 
\subsection{Uniform estimates on the reference domain} 
The key source of uniform estimates is the energy estimate (\ref{ineq:energEH}). In order to obtain
a nontrivial reduced model we need to rescale the space-time domain and structure data.

\subsubsection{Rescaled energy estimate}
Recall the scaling ansatz (S1)--(S2) and the standard geometric change of variables introduced 
in (\ref{def:UepsH})--(\ref{def:nablaEpsH}).
Let us denote the new time variable with hat and define it
according to $t = \T\hat t$, where $\T>0$ denotes the time scale of the system satisfying (S3).
Functions depending on the new time are then defined by $\hat{\bs w}(\hat t) = \bs w (t)$, 
and its time derivative equals $\pa_t \bs w = \T^{-1}\pa_{\hat t}\hat{\bs w}$. 
Taking all rescalings and change of variables into account, the rescaled energy 
estimate (\ref{ineq:energEH}) on the 
reference domain reads: for a.e.~$t\in(0,T)$
% \begin{align}\label{ineq:energy_newtime}
% \frac{\vro_f}{2}\int_{\Omega_\eps}|\bv^\eps(t)|^2 \dd\bx
%  & + \frac{\eta\T}{2}\int_0^t\!\!\int_{\Omega_\eps}|\nabla\bv^\eps|^2 \dd\bx\dd s 
%  + \frac{\vro_s}{2}h^{-\kappa}\T^{-2}\int_{\Omega_h}|\partial_t\bu^h(t)|^2 \dd\bx \\
% & + h^{-\kappa}\int_{\Omega_h}\Big(\mu|\sym\nabla\bu^h(t)|^2 \nonumber
% + \frac{\lambda}{2}|\diver\bu^h(t)|^2\Big)\dd\bx \leq C\T t \eps^{3}\,
% \end{align}
% for a.e.~$t\in(0,T)$, where $T = \T \,T_\eps$ denotes the rescaled time horizon.
% Furthermore, the rescaled higher-order energy estimate (\ref{ineq:energEHH}) reads
% \begin{align}\label{ineq:ho_energy_newtime}
% \frac{\vro_f}{2}\int_{\Omega_\eps}|\pa_t\bv^\eps(t)|^2 \dd\bx
%  & + \frac{\eta\T}{2}\int_0^t\!\!\int_{\Omega_\eps}|\nabla\pa_t\bv^\eps|^2 \dd\bx\dd s 
%  + \frac{\vro_s}{2}h^{-\kappa}\T^{-2}\int_{\Omega_h}|\partial_{tt}\bu^h(t)|^2 \dd\bx \\
% & + h^{-\kappa}\int_{\Omega_h}\Big(\mu|\sym\nabla\pa_t\bu^h(t)|^2 \nonumber
% + \frac{\lambda}{2}|\diver\pa_t\bu^h(t)|^2\Big)\dd\bx \leq C\T \eps^{3}\,
% \end{align}
% for a.e.~$t\in(0,T)$.
% Observe that the last inequality is quantitatively of the same type as (\ref{ineq:energy_newtime}).
% 
% In order to perform the dimension reduction (i.e.~obtain limits as $\eps,h\downarrow0$) 
% we also need to move to the reference domain $\Omega = \Omega_-\cup\omega\cup\Omega_+$ 
% by  and obtain
% the uniform energy estimates there. 
% After the change of variables, the energy estimate (\ref{ineq:energy_newtime}) on the 
% reference domain reads: for a.e.~$t\in(0,T)$ it holds
\begin{align}\label{ineq:energy_ref}
\frac{\vro_f}{2}\eps\int_{\Omega_-}& |\bv(\eps)(t)|^2 \dd\by
 + \frac{\eta h^\tau}{2}\eps\int_0^t\!\!\int_{\Omega_-}|\nabla_\eps\bv(\eps)|^2 \dd\by\dd s 
 + \frac{\vro_s}{2}h^{-\kappa-2\tau+1}\int_{\Omega_+}|\partial_t\bu(h)(t)|^2 \dd\bz \\
& + h^{-\kappa+1}\int_{\Omega_+}\Big(\mu|\sym\nabla_h\bu(h)(t)|^2 \nonumber
+ \frac{\lambda}{2}|\diver_h\bu(h)(t)|^2\Big)\dd\bz \leq Ch^\tau \eps^{3}\,,
\end{align}
where $T = \T \,T_\eps$ denotes the rescaled time horizon.

\subsubsection{Uniform estimates for the fluid velocity}
The rescaled energy estimate (\ref{ineq:energy_ref}) gives us uniform bound
\begin{equation*}
\frac{\eta h^\tau \eps}{2}\int_0^T\!\!\int_{\Omega_-} |\nabla_\eps\bv(\eps)|^2\dd\by\dd s \leq Ch^\tau\eps^3\,,
\end{equation*} 
which directly from the definition of $\nabla_\eps$ implies
\begin{equation}\label{ineq:pa3v}
\int_0^T\!\!\int_{\Omega_-}|\pa_3\bv(\eps)|^2 \dd\by\dd s \leq C\eps^4\,.
\end{equation}
Using the boundary condition $\left.\bv(\eps)\right|_{\{y_3=-1\}} = 0$, we have the identity
\begin{equation*}
\bv(\eps)(y',y_3,t) = \int_{-1}^{y_3}\pa_3\bv(\eps)(y',\zeta,t)\,\dd\zeta\,,
\end{equation*}
which together with (\ref{ineq:pa3v}) provides
\begin{equation*}
\|\bv(\eps)\|_{L^2(0,T;L^2(\Omega_-;\R^3)} \leq C\eps^2\,.
\end{equation*}
% This uniform estimate motivates rescaling of the fluid velocity according to
% \begin{equation*}%\label{def:velscale}
% \bar\bv(\eps) = \eps^{-2}\bv(\eps)\,. %\quad\alpha=1,2\,,\qquad v_3(\eps)=\eps^3\bar v_3(\eps)\,.
% \end{equation*}
% Neglecting the bar notation, 
The obtained a priori estimates then imply (on a subsequence as $\eps\downarrow0$):
\begin{align}\label{eq:vel_conv}
\eps^{-2}\bv(\eps) \rightharpoonup \bv\quad \text{and}\quad 
\eps^{-2}\pa_3\bv(\eps) \rightharpoonup \pa_3\bv\quad\text{weakly in }L^2(0,T;L^2(\Omega_-;\R^3))\,.
\end{align}

\subsubsection{Uniform estimates for the pressure}

According to Proposition \ref{prop:exreg} there exists a unique pressure 
$p^\eps\in L^2(0,T;H^1(\Omega_\eps))$ such that the triplet $(\bv^\eps, p^\eps,\bu^h)$ satisfies
the system (\ref{1.eq:stokes})--(\ref{1.eq:elast}) in the $L^2$-sense. Regularity results of 
Proposition \ref{prop:exreg} allow us to weaken the regularity of test functions.
Thus, we multiply (\ref{1.eq:stokes}) and (\ref{1.eq:elast}) by test functions $\bs\phi$
and $\bs\psi$, respectively, where
$(\bs\phi,\bs\psi)\in C_c\left([0,T); \tilde V(\Omega_\eps)\times V_S(\Omega_h)\right)$ such that
 $\bs\phi(t) = \bs\psi(t) \text{ on }\omega \text{ for every }t\in [0,T)$, and
$\tilde V(\Omega_\eps) = \left\{\bv\in H^1(\Omega_\eps;\R^3)\ 
:\ \left.\bv\right|_{\{x_3=-\eps\}}=0\,,\ \bv \text{ is }\omega\text{-periodic} \right\}$.
Integrating with respect to the new (rescaled) time and original space variables we find
\begin{align}
\vro_f\T^{-1}\int_0^T\!\!\!\int_{\Omega_\eps}\pa_t\bv^\eps\cdot\bs\phi\,\dd \bx\dd t \nonumber
%+ \vro_f \int_{\Omega_\eps}\bv^\eps(T)\cdot\bs\phi(T)
 & + 2\eta\int_0^T\!\!\!\int_{\Omega_\eps}\sym\nabla\bv^\eps:\sym\nabla\bs\phi\,\dd \bx\dd t 
 - \int_0^T\!\!\!\int_{\Omega_\eps}p^\eps\diver\bs\phi\,\dd \bx\dd t \label{eq:weak_pressure}\\
+ \vro_s^h\T^{-2}\int_0^T\!\!\!\int_{\Omega_h}\pa_{tt}\bu^h\cdot\bs\psi\,\dd \bx\dd t  & +
\int_0^T\!\!\!\int_{\Omega_h}(2\mu^h\sym\nabla\bu^h:\sym\nabla\bs\psi + \lambda^h\diver\bu^h\diver\bs\psi)\,\dd \bx\dd t \\
= \int_0^T\!\!\!\int_{\Omega_\eps}\bs f^\eps\cdot\bs\phi \,\dd \bx\dd t\,.\nonumber
 %\quad \text{ a.e. in } (0,T) \,,\nonumber
\end{align}
Unlike in the Stokes equations solely, where the pressure is determined up to a function of time, 
in the case of the FSI problem the pressure is unique. 
This is a consequence of the fact that in the Stokes system the boundary (wall) is assumed to be rigid and 
therefore cannot feel the pressure, while in the present case elastic wall feels the pressure. 
Therefore, we define $\pi^{\eps}(t)=\frac{1}{|\Omega_{\eps}|}\int_{\Omega_{\eps}}p^\eps(\bx,t)\dd\bx$ to be the mean value 
of the pressure at time $t\in(0,T)$. 

Let us first estimate the zero mean value part of the pressure 
$p^\eps-\pi^\eps$ in a classical way.
For an arbitrary $q\in C_c([0,T);L^2_0(\Omega_\eps))$, where 
$L^2_0(\Omega_\eps) = \left\{q\in L^2(\Omega_\eps)\ :\ \int_{\Omega_{\eps}}q\,\dd\bx = 0\right\}$, 
there exists $\bs\phi_q\in C_c([0,T); V(\Omega_\eps))$, 
such that $\diver \bs\phi_q(t) = q(t)$, $\left.\bs\phi_q(t)\right|_\omega = 0$ for all $t\in[0,T)$ and 
$\|\bs\phi_q\|_{L^2(0,T;H^1(\Omega_\eps))}\leq C\eps^{-1}\|q \|_{L^2(0,T;L^2(\Omega_\eps))}$ 
(cf.~\cite[Lemma 9]{MaPa01}).
Taking $(\bs\phi,\bs\psi) = (\bs \phi_q,0)$ %where $\zeta\in C_c^1([0,T))$ 
as test functions in (\ref{eq:weak_pressure}) we have
\begin{align*}
\int_0^T\!\!\!\int_{\Omega_\eps}p^\eps q\,\dd \bx\dd t = 
& \vro_f\T^{-1}\int_0^T\!\!\!\int_{\Omega_\eps}\pa_t\bv^\eps\cdot\bs\phi_q\,\dd \bx\dd t \nonumber
%+ \vro_f \int_{\Omega_\eps}\bv^\eps(T)\cdot\bs\phi(T)
  + 2\eta\int_0^T\!\!\!\int_{\Omega_\eps}\sym\nabla\bv^\eps:\sym\nabla\bs\phi_q\,\dd \bx\dd t 
 \\
& - \int_0^T\!\!\!\int_{\Omega_\eps}\bs f^\eps\cdot\bs\phi_q \,\dd \bx\dd t\,.
 %\quad \text{ a.e. in } (0,T) \,,\nonumber
\end{align*}
%the fluid velocities we find
Using the time scaled energy estimates 
(\ref{ineq:energEH}) and (\ref{ineq:energEHH}), assumption
(A1) for the fluid volume force and the Poincar\'e inequality from Proposition \ref{Poincare} 
we conclude
\begin{equation*}
\left|\int_0^T\!\!\!\int_{\Omega_\eps}p^\eps q\,\dd \bx\dd t\right|
\leq C\sqrt{\eps}\|q\|_{L^2(0,T;L^2(\Omega_\eps))}%(\sqrt\eps\|\pa_t\zeta\|_{L^2(0,T)} + \|\zeta\|_{L^2(0,T)})
\end{equation*}
for all $q\in C_c([0,T);L^2_0(\Omega_\eps))$. Employing a density argument, the latter inequality 
implies 
\begin{equation}\label{ineq:pressure0} 
\|p^\eps - \pi^\eps\|_{L^2(0,T;L^2(\Omega_\eps))} \leq C\sqrt{\eps}\,.
\end{equation}

In order to conclude the pressure estimate we still need to estimate the mean value $\pi^{\eps}$.
Let us define test functions by $(\bs\phi,\bs\psi) = \zeta\left ((0,0,x_3+\eps),(0,0,\eps)\right )$
for an arbitrary $\zeta\in C_c([0,T))$. Notice that $\diver\bs\phi(t) = \zeta(t)$ for every $t\in(0,T)$. 
Taking $(\bs\phi,\bs\psi)$ as test functions in \eqref{eq:weak_pressure} we obtain
\begin{align}
\int_0^T\pi^{\eps}\zeta\,\dd t
= \vro_f\T^{-1}\int_0^T\zeta\int_{\Omega_\eps}\left(\frac{x_3}{\eps} + 1\right)\pa_tv_3^\eps\,\dd \bx\dd t
  + \frac{2\eta}{\eps}\int_0^T\zeta\int_{\Omega_\eps}\pa_3 v^\eps_3\,\dd \bx\dd t \label{eq:mean_pressure} \\
  - \int_0^T\zeta\int_{\Omega_\eps} \left(\frac{x_3}{\eps} + 1\right)f_3^\eps \,\dd \bx\dd t
 \,+ \vro_s^h \T^{-2}\int_0^T\zeta\int_{\Omega_h}\,\pa_{tt} u^h_3\,\dd \bx\dd t\,.\nonumber
\end{align}
Let us estimate the right hand side of (\ref{eq:mean_pressure}) using the time scaled energy estimates 
(\ref{ineq:energEH}) and (\ref{ineq:energEHH}):         
\begin{align*}
\left| \int_0^T\pi^{\eps}\zeta\,\dd t \right| \leq C\T^{-1/2}\eps^2\int_0^T|\zeta|\dd t + 
C\eps \|\zeta\|_{L^2(0,T)} + C\T^{-1/2}\eps^{3/2}h^{-\kappa/2+1/2}\int_0^T|\zeta|\dd t\,.
\end{align*}
Under assumption of scaling ansatz (S1) and (S3), and assuming that $\tau = \kappa - 3\gamma - 3\leq - 1$,
the worst term above, $\T^{-1/2}\eps^{3/2}h^{-\kappa/2+1/2}$ is of order less or equal to $O(1)$
(cf.~Section \ref{sec:reduced_model} for the justification of this assumption). 
Therefore, we have
\begin{equation*}
\left| \int_0^T\pi^{\eps}\zeta\,\dd t \right| \leq C\|\zeta\|_{L^2(0,T)}\,,
\end{equation*}
which implies      
\begin{equation}\label{ineq:meanp}
\|\pi^\eps\|_{L^2(0,T;L^2(\Omega_\eps))}\leq C\sqrt \eps\,.
\end{equation}

Combining (\ref{ineq:meanp}) with (\ref{ineq:pressure0}) we find the pressure estimate
\begin{equation*}%\label{ineq:pressure0} 
\|p^\eps\|_{L^2(0,T;L^2(\Omega_\eps))} \leq C\sqrt{\eps}\,,
\end{equation*}
which further yields the uniform estimate for the pressure $p(\eps)(\by):=p^\eps(\bx)$
defined on the reference domain
\begin{equation}\label{ineq:pressure}
\|p(\eps)\|_{L^2(0,T;L^2(\Omega_+))}\leq C\,. 
\end{equation}
%Observe that, like for the fluid velocity, the time scaling does not affect the pressure estimate.
Finally, we conclude that there exists $p\in L^2(0,T;L^2(\Omega_+))$ such that
(on a subsequence as $\eps\downarrow0$) we have
\begin{align}\label{eq:pressure_conv}
p(\eps) \rightharpoonup p\quad \text{weakly in }L^2(0,T;L^2(\Omega_-))\,.
\end{align}

\subsubsection{Uniform estimates for the structure displacement}
%Next, we estimate the structure displacements. 
The energy estimate (\ref{ineq:energy_ref}) provides an $L^\infty$-$L^2$ estimate of 
the symmetrized scaled gradient,
\begin{equation}\label{ineq:symsg0}
\esssup_{t\in(0,T)}\int_{\Omega_+}|\sym\nabla_h\bu(h)|^2\leq {Ch^{3\gamma-1+\tau + \kappa}}\,.
\end{equation}
%This motivates the rescaling of displacements in terms of $h$. 
With a slight abuse of the notation we introduce the rescaled displacement\\
%\begin{equation*}%\label{def:uscale}
$\displaystyle\bu(h) = {h^{-(3\gamma-1+\tau + \kappa)/2}}\bu(h)$ %\quad\alpha=1,2\,,\qquad u_3(h)=h^{\kappa-5/2}\bar u_3(h)\,.
%\end{equation*}
and (\ref{ineq:symsg0}) transforms into the uniform estimate
%$\|u_i(h)\|_{L^\infty(0,T;L^2(\Omega_+))} \leq C$ for $i=1,2,3$  and   
\begin{equation}\label{ineq:symsg}
\esssup_{t\in(0,T)}\int_{\Omega_+}\left|\sym\nabla_h\bu(h)\right|^2
\leq C\,.
\end{equation}
In the analysis of structure displacements we rely on the Griso decomposition \cite{Gri05}. 
This is relatively novel ansatz-free approach for the dimension reduction in 
elasticity theory
and with applications in other fields (cf.~\cite{CDG18}). 
For every $h>0$, the structure displacement $\bu(h)$ is, at almost every time instance $t\in(0,T)$, 
decomposed into a sum of so called elementary plate 
displacement and warping as follows (cf.~(\ref{app:Griso_dec}) in Appendix \ref{app:A})
\begin{align}\label{eq:grisodec}
\bu(h)(\bz) = \bs w(h)(z') + 
\bs r(h)(z')\times (z_3-\frac12)\bs e_3 + \tilde{\bu}(h)\,,
\end{align}
where
\begin{align*}
\bs w(h)(t,z') = \int_{0}^1\bu(h)(t,\bz)\dd z_3\,,\quad 
\bs r(h)(t,z') = \frac{3}{h}\int_0^1(z_3-\frac12)\bs e_3\times \bu(h)(t,\bz)\dd z_3\,,
\end{align*}
$\tilde{\bu}(h)\in L^\infty(0,T;H^1(\Omega_+))$ is the warping term, and $\times$ denotes 
the cross product in $\R^3$. Moreover, the following uniform estimate holds
(cf.(\ref{app:Griso_estimate}) in Appendix A)
\begin{align*}
\|\sym\nabla_h \left(\bs w(h)(z') 
+ \bs r(h)(z')\times (z_3-1/2)\bs e_3\right)\|^2_{L^\infty(0,T;L^2(\Omega_+))} + 
\|\nabla_h\tilde{\bu}(h)\|^2_{L^\infty(0,T;L^2(\Omega_+))} \\
+ \frac{1}{h^2}\|\tilde{\bu}(h)\|^2_{L^\infty(0,T;L^2(\Omega_+))} & \leq C\,,
%C\|\sym\nabla_h{\bu}(h)\|^2_{L^\infty(0,T;L^2(\Omega_+))}
\end{align*}
with $C>0$ independent of $h$ and $\bu(h)$.

According to \cite[Theorem 2.6]{Gri05}, the above
uniform estimate implies the existence of 
a sequence of in-plane translations $\bs a(h) = (a_1(h),a_2(h))\subset (L^\infty(0,T))^2$, as well
as limit displacements $w_1, w_2\in L^\infty(0,T;\Hper^1(\omega))$, $w_3\in L^\infty(0,T;\Hper^2(\omega))$
and $\bar \bu\in L^2(\omega;H^1((0,1);\R^3))$ such that
the following weak-$\star$ convergence results hold:
\begin{align}
w_\alpha(h)- a_\alpha(h) &\overset{\ast}{\rightharpoonup} w_\alpha \quad\text{in }
\ L^\infty(0,T;\Hper^1(\omega))\,,\quad \alpha = 1,2\,,\label{eq:w1_conv} \\
%w_2(h)- a_2(h) &\overset{\ast}{\rightharpoonup} w_2 \quad\text{in }\ L^\infty(0,T;\Hper^1(\omega))\,,\label{eq:w2_conv} \\
hw_3(h) &\overset{\ast}{\rightharpoonup} w_3 \quad\text{in }\ L^\infty(0,T;\Hper^1(\omega))\,,\label{eq:w3_conv} \\
% \end{align}
% \begin{align}
u_\alpha(h) - a_\alpha(h) &\overset{\ast}{\rightharpoonup} w_\alpha - (z_3-\frac12)\pa_\alpha w_3 
\quad\text{in }\ L^\infty(0,T;H^1(\Omega_+))\,,\quad \alpha = 1,2\,,\label{eq:lsd1}\\
%u_2(h) - a_2(h) &\overset{\ast}{\rightharpoonup} w_2 - (z_3-\frac12)\pa_2w_3 \quad\text{in }\ L^\infty(0,T;H^1(\Omega_+))\,,\label{eq:lsd2}\\
hu_3(h) &\overset{\ast}{\rightharpoonup} w_3  \quad\text{in }\ L^\infty(0,T;H^1(\Omega_+))\,,\label{eq:lsd3}\\
\sym\nabla_h\bu(h)  &\overset{\ast}{\rightharpoonup}  \imath\Big(\sym\nabla' (w_1,w_2)  - (z_3-\frac12)\nabla'^2w_3 \Big)
+ \sym \left(\bs e_3\otimes(\pa_3\bar\bu)\right). \label{eq:lss}
\end{align}
%{\color{blue} 
To estimate in-plane translations $a_{\alpha}(h)$ we first use \eqref{TranslatioEstimate} to get:
\begin{equation}\label{TranslationEstimateRescaled}
\left|\int_{\omega}u_{\alpha}(h)(t)\dd x'\right|\leq C\eps^2 h^{\tau - (3\gamma-1+\tau+\kappa)/2}
=C h^{(\tau - \kappa+\gamma+1)/2}\,, \quad t\in [0,T)\,,\quad \alpha = 1,2\,.
\end{equation}
Combining \eqref{TranslationEstimateRescaled} with \eqref{eq:lsd1} we get
\begin{equation}\label{TranslatioLimitestimates}
\|a_{\alpha}(h)\|_{L^{\infty}(0,T)}\leq C h^{(\gamma+\tau+1-\kappa)/2}\,,
\end{equation}
and therefore $h^{(\kappa-\gamma-\tau-1)/2}a_\alpha(h) \overset{\ast}{\rightharpoonup} a_\alpha$ in
$L^\infty(0,T)$.

Employing the higher-order energy estimate (\ref{ineq:energEHH}) on the reference domain and in the
rescaled time we find convergence results for the respective 
time derivatives analogous to (\ref{eq:w1_conv})--(\ref{eq:lss}). Moreover, it holds
\begin{equation}\label{ineq:transl_derivative}
\|\partial_t a_{\alpha}(h)\|_{L^{\infty}(0,T)}\leq Ch^{(\tau - \kappa + \gamma + 1)/2}\,,
\end{equation}
which implies 
$h^{(\kappa-\gamma-\tau-1)/2}\pa_ta_\alpha(h) \overset{\ast}{\rightharpoonup} \pa_t a_\alpha$ in
$L^\infty(0,T)$.

\subsection{Identification of the reduced model}\label{sec:reduced_model}
Taking all rescalings into account,
the rescaled variational equation (\ref{eq:weak_pressure}) on the reference domain reads
\begin{align*}
\nonumber
%+ \vro_f \int_{\Omega_\eps}\bv^\eps(T)\cdot\bs\phi(T)
-\vro_f h^{-\tau} \eps^3 \int_0^T\!\!\int_{\Omega_-}\bv(\eps)\cdot\pa_t\bs\phi\,\dd \by \dd t 
+ 2\eta\eps^3\int_0^T\!\!\int_{\Omega_-}\sym\nabla_\eps\bv(\eps):\sym\nabla_\eps\bs\phi\,\dd \by\dd t \\
- \eps\int_0^T\!\!\int_{\Omega_-}p(\eps)\diver_\eps\bs\phi\,\dd \by\dd t + 
\vro_sh^{\delta - 2\tau}\int_0^T\!\! \int_{\Omega_+}\bu(h)\cdot\pa_{tt}\bs\psi \,\dd \bz\dd t \\  + 
h^\delta\int_0^T\!\!\int_{\Omega_+}(2\mu\sym\nabla_h\bu(h):\sym\nabla_h\bs\psi 
+ \lambda\diver_h\bu(h)\diver_h\bs\psi)\,\dd \bz\dd t \\
 = \eps\int_0^T\!\!\int_{\Omega_-}\bs f(\eps)\cdot\bs\phi \,\dd \by\dd t\,, 
%+ \vro_f \int_{\Omega_\eps}\bv^\eps_0\cdot\bs\phi \,\dd \bx + \vro_s \int_{\Omega_h}\bu^h_1\cdot\bs\psi \,\dd \bx \nonumber
 %\quad \text{ a.e. in } (0,T) \,,\nonumber
\end{align*}
for all $(\bs\phi,\bs\psi)\in C_c^2\left([0,T); V(\Omega_-)\times V_S(\Omega_+)\right)$ such that 
 $\bs\phi(t) = \bs\psi(t) \text{ on }\omega \text{ for all }t\in [0,T)$,
 and where $\delta = - \kappa + (3\gamma-1+\tau + \kappa)/2 + 1 $.
In order to obtain a nontrivial coupled reduced model on a limit as $h\downarrow0$ 
we need to adjust $\delta=-1$. This condition is due to the linear theory of plates 
(cf.~\cite[Section 1.10]{Cia97}). Namely, the fluid 
pressure which is here $O(1)$ acts as a normal force on the structure and therefore 
has to balance the structure stress terms in the right way.
Hence, we find the choice of the right time scale to be $\T = h^\tau$ with  % $\delta = -1$, which implies
\begin{equation}\label{eq:rel_tau}
\tau = \kappa - 3\gamma - 3\,.
\end{equation} 
The above weak formulation then becomes
\begin{align}
\nonumber
%+ \vro_f \int_{\Omega_\eps}\bv^\eps(T)\cdot\bs\phi(T)
-\vro_f h^{6\gamma - \kappa + 3} \int_0^T\!\!\int_{\Omega_-}\bv(\eps)\cdot\pa_t\bs\phi\,\dd \by \dd t
+ 2\eta\eps^3\int_0^T\!\!\int_{\Omega_-}\sym\nabla_\eps\bv(\eps):\sym\nabla_\eps\bs\phi\,\dd \by\dd t \\
- \eps\int_0^T\!\!\int_{\Omega_-}p(\eps)\diver_\eps\bs\phi\,\dd \by\dd t  +
\vro_sh^{6\gamma - 2\kappa + 5}\int_0^T\!\! \int_{\Omega_+}\bu(h)\cdot\pa_{tt}\bs\psi \,\dd \bz\dd t \label{eq:weak_rescaled} \\  +
h^{-1}\int_0^T\!\!\int_{\Omega_+}(2\mu\sym\nabla_h\bu(h):\sym\nabla_h\bs\psi
+ \lambda\diver_h\bu(h)\diver_h\bs\psi)\,\dd \bz\dd t \nonumber\\
 = \eps\int_0^T\!\!\int_{\Omega_-}\bs f(\eps)\cdot\bs\phi \,\dd \by\dd t\,.\nonumber 
\end{align}

Expanding \ref{eq:weak_rescaled} with test functions of the form $\bs\phi = (\phi_1,\phi_2,0)$ and
$\bs \psi = (\psi_1,\psi_2,0)$, multiplying the equation with $h^2$ and employing the weak*-convergence 
results for the structure (\ref{eq:lsd1})--(\ref{eq:lss}), we find 
\begin{align*}
\mu\int_0^T\!\!\int_{\Omega_+}(\pa_3\bar u_1\pa_3\psi_1 + \pa_3\bar u_2\pa_3\psi_2)\,\dd \bz\dd t = 0\,.
\end{align*}
Taking sequences $(\psi_{1,n})$ and $(\psi_{2,n})$ which approximate $\bar u_1$ and $\bar u_2$, 
respectively, in the sense of $L^2$-convergence, we conclude $\pa_3\bar u_1 = \pa_3\bar u_2 = 0$.
Similarly, taking test functions $\bs\phi = (0, 0, \phi_3)$ and
$\bs \psi = (0, 0, \psi_3)$, we obtain
\begin{align*}
\mu\int_0^T\!\!\int_{\Omega_+}\big((2\mu + \lambda)\pa_3\bar u_3 + \lambda(\pa_1 w_1 + \pa_2w_2 
- (z_3 - \frac12)\Delta'w_3) \big)\pa_3\psi_3 \,\dd \bz\dd t = 0\,,
\end{align*}
from which we conclude
\begin{align*}
\pa_3\bar u_3 = -\frac{\lambda}{2\mu + \lambda}\big(\pa_1 w_1 + \pa_2w_2 
- (z_3 - \frac12)\Delta' w_3\big)\,.
\end{align*}
Previous calculations are motivated with those from the proof of Theorem 1.4-1 from \cite{Cia97}. 
Now we have  complete information on the limit of the scaled strain (\ref{eq:lss}) 
given in terms of the limit displacements $(w_1,w_2,w_3)$.

Next, we will take test functions to imitate the shape of the limit of scaled displacements 
(\ref{eq:lsd1})--(\ref{eq:lsd3}), i.e.~we take $\bs\psi = (h\psi_1,h\psi_2,\psi_3)$ satisfying 
$\pa_1\psi_3 + \pa_3\psi_1 = \pa_2\psi_3 + \pa_3\psi_2 = \pa_3\psi_3 = 0$,
while for the fluid part we accordingly take (in order to satisfy the interface conditions) 
$\bs \phi = (h\phi_1,h\phi_2,\phi_3)$.
With this choice of test functions, under assumption $\tau \leq -1$ (i.e.~$\kappa\leq 3\gamma + 2$), 
the weak limit form of (\ref{eq:weak_rescaled}) 
(on a subsequence as $h\downarrow0$) reads
\begin{align}
-\int_0^T\!\!\int_{\Omega_-}p\pa_3\phi_3\,\dd \by\dd t + \nonumber
\chi_\tau\vro_s\int_0^T\!\! \int_{\omega}w_3\pa_{tt}\psi_3 \,\dd z'\dd t \\ \label{eq:limit_fsi}
+ \int_0^T\!\!\int_{\Omega_+}\Big(2\mu\big(\sym\nabla' (w_1,w_2) - (z_3-\frac12)\nabla'^2w_3\big):\sym\nabla'(\psi_1,\psi_2)\\
+ \frac{2\mu\lambda}{2\mu + \lambda}\diver\big((w_1,w_2) - (z_3-\frac12)\nabla' w_3 \big)\diver(\psi_1,\psi_2)\Big)\,\dd \bz\dd t & = 0\,,\nonumber
\end{align}
where $\chi_\tau = 1$ for $\tau = -1$ and $\chi_\tau = 0$ for $\tau < -1$. Notice that for
$\tau = -1$, i.e.~$\kappa = 3\gamma + 2$, we have $\delta - 2\tau = 1$, and the inertial term 
of the vertical displacement of the structure survives in the limit. 

The obtained limit model (\ref{eq:limit_fsi}) is a linear plate model (cf.~\cite{Cia97}) coupled with the limit 
pressure from the fluid part, which acts as a normal force on the interface $\omega$
of the structure (cf.~equation (\ref{eq:limit_fsi2}) below).
Let us consider the pressure term more in detail. Taking test function 
$\bs\phi\in C_c^1([0,T);C_c^\infty(\Omega_-;\R^3))$, i.e.~smooth and with compact support in space, and 
$\bs\psi = 0$ in (\ref{eq:weak_rescaled}) we find
\begin{align*}
\int_0^T\!\!\int_{\Omega_-}p\pa_3\phi_3\,\dd \by\dd t = 0\,,
\end{align*}
which implies $\pa_3 p=0$ in the sense of distributions. As a consequence of this we have that 
$p$ is independent of the vertical variable $z_3$, and therefore $p$ (although $L^2$-function)
has the trace on $\omega$. Since $\phi_3 = \psi_3$ on $\omega\times(0,T)$, after integrating by parts 
in the pressure term, the limit form (\ref{eq:limit_fsi}) then becomes
\begin{align}
-\int_0^T\!\!\int_{\omega}p\psi_3\,\dd z'\dd t + \nonumber
\chi_\tau\vro_s\int_0^T\!\! \int_{\omega}w_3\pa_{tt}\psi_3 \,\dd z'\dd t \\ \label{eq:limit_fsi2} 
+ \int_0^T\!\!\int_{\Omega_+}\Big(2\mu\big(\sym\nabla' (w_1,w_2) - (z_3-\frac12)\nabla'^2w_3\big):\sym\nabla'(\psi_1,\psi_2)\\
+ \frac{2\mu\lambda}{2\mu + \lambda}\diver\big((w_1,w_2) - (z_3-\frac12)\nabla' w_3 \big)\diver(\psi_1,\psi_2)\Big)\,\dd \bz\dd t & = 0\,.\nonumber
\end{align}

Recall that structure test functions in (\ref{eq:limit_fsi2}) satisfy 
$\pa_1\psi_3 + \pa_3\psi_1 = \pa_2\psi_3 + \pa_3\psi_2 = \pa_3\psi_3 = 0$. 
According to \cite[Theorem 1.4-1 (c)]{Cia97}, these conditions are equivalent with 
the following representation of test functions:
\begin{align*}
\psi_\alpha = \zeta_\alpha - (z_3 - \frac12)\pa_\alpha\zeta_3\,,\quad\text{and}\quad \psi_3 = \zeta_3\,,
\end{align*}
for some $\zeta_\alpha\in C_c^1([0,T);\Hper^1(\omega))$, $\alpha=1,2$, 
and $\zeta_3\in C_c^1([0,T);\Hper^2(\omega))$.
Next, we resolve (\ref{eq:limit_fsi2}) into equivalent formulation, which decouples horizontal and 
vertical displacements. 

First, choosing the test function  
$\bs \psi = (-(z_3-\frac12)\pa_1\zeta_3, -(z_3-\frac12)\pa_2\zeta_3, \zeta_3)$, for arbitrary
$\zeta_3\in C_c^2([0,T);\Hper^2(\omega))$,
after explicit calculations of integrals we find
\begin{align}\label{eq:w3_1}
-\int_0^T\!\!\int_{\omega}p\zeta_3\,\dd z'\dd t +
\chi_\tau\vro_s\int_0^T\!\! \int_{\omega}w_3\pa_{tt}\zeta_3 \,\dd z'\dd t &~\\ +
\int_0^T\!\!\int_{\omega}\left(\frac{4\mu}{3}\nabla'^2 w_3:\nabla'^2\zeta_3
+ \frac{4\mu\lambda}{3(2\mu + \lambda)}\Delta' w_3\Delta' \zeta_3\right) \,\dd z'\dd t \nonumber
&= 0\,.
\end{align}
Secondly, taking the test function $\bs\psi = (\zeta_1,\zeta_2,0)$, for arbitrary
$\zeta_\alpha\in C_c^1([0,T);\Hper^1(\omega))$,
we obtain the variational equation for horizontal displacements only,
\begin{align}\label{eq:w12_1}
\int_0^T\!\!\int_{\omega}\left(4\mu\sym\nabla' (w_1,w_2) :\nabla'(\zeta_1,\zeta_2)
+ \frac{4\mu\lambda}{2\mu + \lambda}\diver'\left(w_1,w_2\right)\diver'(\zeta_1,\zeta_2)\right)\dd z'\dd t
= 0\,.
\end{align}
Equation (\ref{eq:w12_1}) implies that horizontal displacements $(w_1,w_2)$ are spatially constant
functions, and as such they will not affect the reduced model. 
Moreover, they are dominated by potentially large horizontal translations, hence we omit them
in further analysis. Thus, the limit system
(\ref{eq:limit_fsi2}) is now essen\-ti\-a\-lly described with (\ref{eq:w3_1}), which relates the limit 
fluid pressure $p$ with the limit vertical displacement of the structure $w_3$. 

In order to close 
the limit model, we need to further explore on the fluid part. 
First, we analyze the divergence free
condition on the reference domain. Multiplying $\diver_\eps\bv(\eps) = 0$ by a test function
$\varphi\in C_c^1([0,T);\Hper^1(\omega))$, integrating over space and time, integrating by parts and
employing the rescaled kinematic condition 
$\eps^2\bv(\eps) = h^{(3\gamma-1+\tau + \kappa)/2 - \tau}\pa_t\bu(h)$, which with relation 
(\ref{eq:rel_tau}) and (S1) becomes $\eps^{-1}\bv(\eps) = h\pa_t\bu(h)$ a.e.~on $\omega\times(0,T)$,
% $v_3(\eps)/\eps = h\pa_t u_3(h)$ 
we have
\begin{align}\label{eq:divh}
-\int_0^T\!\!\int_{\Omega_-}\left(v_1(\eps)\pa_1\varphi + v_2(\eps)\pa_2\varphi \right)\dd \by\dd t - 
\int_0^T\!\!\int_{\omega}hu_3(h)\pa_t\varphi\,\dd y'\dd t = 0\,.
\end{align}
Utilizing convergence results (\ref{eq:vel_conv}) and (\ref{eq:lsd3}) in (\ref{eq:divh}),
we find (on a subsequence as $\eps\downarrow0$)
\begin{align}\label{eq:div_limit}
-\int_0^T\!\!\int_{\Omega_-}\left(v_1\pa_1\varphi + v_2\pa_2\varphi \right)\dd \by\dd t - 
\int_0^T\!\!\int_{\omega}w_3\pa_t\varphi\,\dd y'\dd t = 0\,,
\end{align}
which relates the limit vertical displacement of the structure with limit horizontal fluid velocities.

The relation between horizontal fluid velocities $(v_1,v_2)$ and pressure $p$ is obtained from
(\ref{eq:weak_rescaled}) as follows. Take test functions $\bs\phi = (\phi_1/\eps,\phi_2/\eps,0)$ with 
$\phi_\alpha\in C_c^1([0,T);C_c^\infty(\Omega_-))$ and
$\bs\psi = 0$, then convergence results (\ref{eq:vel_conv}) and (\ref{eq:pressure_conv}) yield
\begin{align}\label{eq:pv}
\eta\int_0^T\!\!\int_{\Omega_-}\left(\pa_3v_1\pa_3\phi_1 + \pa_3v_2\pa_3\phi_2\right)\dd\by\dd t
- \int_0^T\!\!\int_{\Omega_-}& \left(p\pa_1\phi_1 + p\pa_2\phi_2 \right)\dd\by\dd t \\
 &= \int_0^T\!\!\int_{\Omega_-}\left(f_1\phi_1 + f_2\phi_2\right)\dd\by\dd t\,, \nonumber
\end{align}
and the reduced model composed of (\ref{eq:w3_1}), (\ref{eq:div_limit}) and (\ref{eq:pv}) 
is now closed.

Before exploring the limit model more in detail, let us conclude that $v_3 = 0$. Namely,
for an arbitrary $\varphi\in C_c^1([0,T);H^1(\Omega_-))$, integrating
 the divergence-free condition we calculate 
\begin{align*}
\lim_{\eps\downarrow0}\int_0^T\!\!\int_{\Omega_-}\pa_3v_3(\eps)\varphi\,\dd\by\dd t
= \lim_{\eps\downarrow0}\left(\eps\int_0^T\!\!\int_{\Omega_-}\left(v_1(\eps)\pa_1\varphi 
+ v_2(\eps)\pa_2\varphi\right)\dd\by\dd t \right) = 0\,,
\end{align*}
which implies $\pa_3v_3=0$, and therefore $v_3=0$, due to the no-slip boundary condition.

\subsection{A single equation}\label{sec:thinfilmeq}
Since $p$ is independent of the vertical variable $y_3$, equation (\ref{eq:pv})
can be solved for $v_\alpha$ explicitly in terms of $y_3$ and $p$. 
Let us first resolve the boundary conditions for $v_\alpha$ in the vertical direction.
The bottom condition is inherited from the original no-slip condition, 
i.e.~$v_\alpha(\cdot,-1,\cdot) = 0$,
while for the interface condition we derive $v_\alpha(\cdot,0,\cdot) = \pa_ta_\alpha$, $\alpha = 1,2$,
where $\pa_ta_\alpha$ are translational limit velocities of the structure defined by 
(\ref{ineq:transl_derivative}). 
Recall the rescaled
kinematic condition $v_\alpha(\eps) = \eps h\pa_tu_\alpha(h)$ on $\omega\times(0,T)$. Multiplying this
with a test function $\varphi\in C_c^1([0,T); H^1(\omega))$ and using convergence results
(\ref{eq:lsd1}) and (\ref{TranslatioLimitestimates}) we have
\begin{equation*}
\lim_{\eps\downarrow0}\int_0^T\!\!\int_{\omega}v_\alpha(\eps) \varphi\,\dd y'\dd t =
- \lim_{h\downarrow0}h^{\gamma+1}\int_0^T\!\!\int_{\omega}u_\alpha(h)\,\pa_t\varphi \,\dd z'\dd t
= {\int_0^T\!\!\int_\omega\partial_t a_{\alpha}\,\varphi}\,\dd z'\dd t \,.
\end{equation*}
Since (\ref{eq:vel_conv}) implies $v_\alpha(\eps)\rightharpoonup v_\alpha$ weakly
in $L^2(0,T;L^2(\omega))$, we conclude that $v_\alpha = {\partial_t a_{\alpha}}$
a.e.~on $\omega\times(0,T)$.
Explicit solution of $v_\alpha$ ($\alpha=1,2$) from (\ref{eq:pv}) is then given by
\begin{equation}\label{eq:v_alpha}
v_\alpha(\by,t) = \frac{1}{2\eta}y_3(y_3+1)\pa_\alpha p(y',t) + F_\alpha(\by,t) + (1+y_3)\pa_ta_\alpha\,,
\quad (\by,t)\in\Omega_-\times(0,T)\,,%\ \alpha=1,2\,,
\end{equation} 
where $\displaystyle F_\alpha(\cdot,y_3,\cdot) 
= \frac{y_3+1}{\eta}\int_{-1}^{0} \zeta_3 f_\alpha(\cdot,\zeta_3,\cdot)\,\dd \zeta_3 + 
\frac{1}{\eta}\int_{-1}^{y_3}(y_3-\zeta_3) f_\alpha(\cdot,\zeta_3,\cdot)\,\dd \zeta_3$. 
From equation (\ref{eq:div_limit}) we have %in the sense of distributions,
\begin{equation}\label{eq:reynolds1} 
\int_0^T\!\!\int_{\omega}\left(\pa_1\int_{-1}^0 v_1\,\dd y_3 + \pa_2\int_{-1}^0 v_2\,\dd y_3
+ \pa_t w_3\right)\varphi\,\dd y'\dd t = 0\,.
\end{equation} 
Replacing $v_\alpha$ with (\ref{eq:v_alpha}) it follows a Reynolds type equation
\begin{equation}\label{eq:reynolds2}
\int_0^T\!\!\int_{\omega}\left(-\frac{1}{12\eta}\Delta'p - F
+ \pa_t w_3\right)\varphi\,\dd y'\dd t = 0\,,
\end{equation}
where $\displaystyle F(y',t) = -\int_{-1}^0 \left( \pa_1F_1 + \pa_2 F_2\right)\dd y_3$.
Considering equation (\ref{eq:w3_1}) in the sense of distributions, i.e.
\begin{align}\label{eq:pw3}
p &= \chi_\tau\vro_s\pa_{tt}w_3 + 
\frac{8\mu(\mu + \lambda)}{3(2\mu + \lambda)}(\Delta')^2w_3\,,
 %\quad\text{on }\omega\times (0,T)\,,\label{eq:pw3}
\end{align}
where $(\Delta')^2$ denotes the bi-Laplacian in horizontal variables, we finally
obtain the reduced model in terms of the vertical displacement only
\begin{equation}\label{eq:w3_evol}
\pa_t w_3 - \chi_\tau\frac{\vro_s}{12\eta}\Delta'\pa_{tt}w_3 
- \frac{2\mu(\mu + \lambda)}{9\eta(2\mu + \lambda)}(\Delta')^3w_3 = F \,. 
\end{equation}
This is an evolution equation for $w_3$ of order six in spatial derivatives. 
The term with mixed space and time 
derivatives is present only for $\tau=-1$, and in the context of beam models it is 
called a rotational
inertia (cf.~\cite[Section 1.14]{Cia97}). Equation (\ref{eq:w3_evol}) is 
accompanied by trivial initial data $w_3(0) = 0$ and periodic boundary conditions.
Knowing $w_3$, the pressure and horizontal velocities of the fluid are then calculated according to
(\ref{eq:pw3}) and (\ref{eq:v_alpha}), respectively. This finishes the proof of Theorem \ref{tm:main}.

\section{Error estimates --- proof of Theorem \ref{tm:EE}}\label{sec:EE}  
This section is devoted to the proof of our second main result --- Theorem \ref{tm:EE}, 
which provides the error estimates for approximation of solutions to the original FSI problem 
(\ref{1.eq:stokes})--(\ref{def:InitialCond}) by
approximate solutions constructed from the reduced model (\ref{eq:w3_evol}). 
In the subsequent analysis we will assume additional regularity of 
solutions $(\bv^\eps, p^\eps, \bu^h)$
 to the original problem, together with
 sufficient regularity of solutions $w_3$ to the reduced problem (\ref{eq:w3_evol}), as well as 
 regularity of external forces.  
In the sequel we work on the original thin domain $\Omega_{\eps,h}$, but in the rescaled time variable
with the scaling parameter $\tau < -1$.

\subsection{Construction of approximate solutions and error equation}

Recall the limit model (\ref{eq:w3_evol}) in terms of the scaled vertical displacement $w_3$ (for $\tau < -1$):
\begin{align}\label{eq:limit_model}  
\pa_t w_3 %- \chi_\tau\vro_s\Delta'\pa_{tt}w_3 
- \frac{2\mu(\mu + \lambda)}{9\eta(2\mu + \lambda)}(\Delta')^3w_3 &= F \,
\quad \text{in }\ \omega\times(0,T)\,, \\ \nonumber
w_3(0) &= 0\,.
\end{align}
This is a linear parabolic partial differential equation with periodic boundary conditions.
The classical theory of linear partial differential equations provides the well-posedness and smoothness 
of the solution (see e.g. \cite[Chapters 3 and 4]{LionsMagenes}).
Based on (\ref{eq:limit_model}) we reconstruct the limit fluid pressure and horizontal velocities
according to:
\begin{align}\label{eq:limit_p}
p &= %\chi_\tau\vro_s\pa_{tt}w_3 + 
\frac{8\mu(\mu + \lambda)}{3(2\mu + \lambda)}(\Delta')^2w_3\,,\\
v_\alpha &= \frac{1}{2\eta}y_3(y_3 + 1)\pa_\alpha p + F_\alpha  + (y_3+1)\pa_ta_\alpha
\,,\quad \alpha=1,2\,,
\label{eq:limit_v}
\end{align} 
where $F_\alpha$ is defined like in (\ref{eq:v_alpha}). Limit structure velocities $\pa_t a_\alpha$
will be specified by an 
additional interface condition $\displaystyle \int_\omega \pa_3 v_\alpha \, \dd z' = 0$ for a.e.~$t\in(0,T)$, 
which can be formally seen as a weakened limit stress balance condition and it will be justified by the
convergence result of Theorem \ref{tm:EE}. Using the periodic boundary conditions of the pressure, the
interface condition implies
\begin{equation}\label{def:pata}
\pa_ta_\alpha (t) = -\int_\omega \pa_3F_\alpha(y',0,t)\dd y'\,,\quad \alpha = 1,2\,.
\end{equation}
Let us first define the approximate pressure by
\begin{align*}
\paa^\eps(\bx,t) = p(x',t)\,,\quad (\bx,t)\in\Omega_\eps\times(0,T)\,,
\end{align*}
where $p$ is given by (\ref{eq:limit_p}).
An approximate fluid velocity is constructed as (cf.~\cite{DuMP00})
\begin{align}\label{eq:Approx}
{\bs\va}^\eps(\bx,t)=\eps^2\Big(v_1(x',\frac{x_3}{\eps},t),v_2(x',\frac{x_3}{\eps},t),\va_3^\eps(\bx,t)\Big)\,,
\quad (\bx,t)\in\Omega_\eps\times(0,T)\,,
\end{align}
where $v_1$, $v_2$ are given by (\ref{eq:limit_v}) and 
\begin{equation*}
\va_3^\eps(\bx,t)=-\eps^3\int_{-1}^{x_3/\eps}(\pa_1v_1
+ \pa_2v_2)(x',\xi,t)\,\dd \xi\,.
\end{equation*}
Notice that $\diver{\bs\va}^\eps = 0$ and therefore 
${\bs\va}^\eps \in L^2(0,T;V_F(\Omega_\eps))$. Furthermore, $\paa^\eps$ and ${\bs\va}^\eps$
solve the modified Stokes system
\begin{equation}\label{eq:modifStokes}
\varrho_f\T^{-1}\partial_t \bs\va^\eps - \diver\sigma_f(\bs\va^\eps,\paa^\eps) 
= \bs f^\eps - f_3^\eps\bs e_3 + \bs \res^\eps_f\,,
\end{equation}
where the residual term $\bs \res^\eps_f$ is given by
\begin{equation}\label{eq:res_fluid}
\bs \res^\eps_f = \varrho_f\T^{-1}\partial_t \bs\va^\eps %- \eta\Delta'\va_3^\eps\,\bs e_3 - 
- \eta\Delta'{\bs\va}^\eps
-\eta\partial_{33} \va_3^\eps\,\bs e_3\,.
\end{equation}
From the definition of the fluid residual $\bs\res_f^\eps$ we immediately have 
\begin{align}\label{ineq:Residual}
\|\bs \res_f^\eps\|_{L^2(0,T;L^2(\Omega_{\eps}))}\leq C\eps^{3/2}\,.
\end{align}

Multiplying equation (\ref{eq:modifStokes}) by a test function $\bs\phi\in C^1_c([0,T);V_F(\Omega_\eps))$,
 and then integrating over $\Omega_\eps\times(0,T)$, we find
\begin{align}\label{eq:modifStokes_weak}
-\vro_f\int_0^T\!\! \int_{\Omega_\eps}\bs\va^\eps\cdot\pa_t\bs\phi\,\dd \bx\dd t
+& 2\eta\T\int_0^T\!\!\int_{\Omega_\eps}\sym\nabla\bs\va^\eps:\sym\nabla\bs\phi\,\dd \bx\dd t \\
-\T\int_0^T\!\!\int_{\omega}\sigma_f(\bs\va^\eps,\paa^\eps)\bs\phi\cdot \bs e_3\,\dd x'\dd t 
&= \T\int_0^T\!\!\int_{\Omega_\eps}(f_1^\eps\phi_1 + f_2^\eps\phi_2) \,\dd \bx\dd t 
+ \T\int_0^T\!\!\int_{\Omega_\eps}\bs\res_f^\eps\cdot\bs\phi \,\dd \bx\dd t \,.\nonumber
\end{align}
Expanding the boundary term we get
\begin{align*}
\T\int_0^T\!\!\int_{\omega}\sigma_f & (\bs\va^\eps,\paa^\eps) \bs\phi\cdot \bs e_3\,\dd x'\dd t \\
% &= \int_0^T\!\!\int_{\omega}\left(\eta(\pa_1\va_3^\eps + \pa_3\va_1^\eps)\phi_1 + 
% \eta(\pa_2\va_3^\eps + \pa_3\va_2^\eps)\phi_2 + (2\eta\pa_3\va_3^\eps - \paa^\eps)\phi_3 \right)\dd \bx\dd t \\
&= \T\int_0^T\!\!\int_{\omega}\Big(-\eps^3\eta\int_{-1}^0(\pa_{11}v_1 + \pa_{12}v_2)\phi_1
-\eps^3\eta\int_{-1}^0(\pa_{21}v_1 + \pa_{22}v_2)\phi_2 \\
&\qquad\qquad  + \eps\eta\pa_3v_1\phi_1 + \eps\eta\pa_3v_2\phi_2 
- 2\eps^2\eta(\pa_1v_1 + \pa_2v_2)\phi_3 - p\phi_3 \Big)\,\dd x'\dd t \\
&= \T\int_0^T\!\!\int_{\omega}\bs\res_b^\eps\cdot\bs\phi \,\dd x'\dd t 
- \T\int_0^T\!\!\int_{\omega}p\phi_3 \,\dd x'\dd t\,, 
\end{align*}
where we defined $\bs\res_b^\eps$ as the boundary residual term given by
\begin{align*}
\bs\res_b^\eps = \eta\left(\eps\pa_3v_1 -
\eps^3\int_{-1}^0(\pa_{11}v_1 + \pa_{12}v_2), \eps\pa_3v_2 -
\eps^3\int_{-1}^0(\pa_{21}v_1 + \pa_{22}v_2), 
- 2\eps^2(\pa_1v_1 + \pa_2v_2)\right).   
\end{align*}

%Here we used the boundary conditions $\pa_3v_\alpha = 0$ on $\omega\times(0,T)$.
%(recall $\chi_\tau = 0$, since $\tau < -1$)
% \begin{align*}
% &-\int_0^T\!\!\int_{\omega}p\phi_3\,\dd z'\dd t = \nonumber\\
% %\chi_\tau\vro_s\int_0^T\!\! \int_{\omega}w_3\pa_{tt}\psi_3 \,\dd z'\dd t \\ 
% &+ \int_0^T\!\!\int_{\Omega_+}\Big(2\mu(z_3-\frac12)\nabla'^2w_3  :\sym\nabla'(\psi_1,\psi_2)
% + \frac{2\mu\lambda}{2\mu + \lambda}\diver(z_3-\frac12)\nabla' w_3 \diver(\psi_1,\psi_2)\Big)\,\dd \bz\dd t  \,,
% \end{align*}
% where the test function $\bs\psi$  
% satisfies $\pa_1\psi_3 + \pa_3\psi_1 = \pa_2\psi_3 + \pa_3\psi_2 = \pa_3\psi_3 = 0$
% and $\psi_3 = \phi_3$ on $\omega$.

%Neglecting the artificial translations of in-plane displacements (cf.~Remark \ref{rem:art_vel}), 
Next, we define the approximate displacement by  
\begin{equation}\label{def:ua_h}
\bs\ua^h(\bx,t) = h^{\kappa-3}\left(h^{-\gamma}a_1 - \Big(x_3 
- \frac{h}{2}\Big)\pa_1w_3(x',t), h^{-\gamma}a_2 - \Big(x_3 - \frac{h}{2}\Big)\pa_2w_3(x',t), 
w_3(x',t) \right)\,,   
\end{equation}
for all $(\bx,t)\in\Omega_h\times(0,T)$, 
where $w_3$ is the solution of (\ref{eq:limit_model}), and $a_\alpha$ are horizontal time-dependent 
translations calculated by $\displaystyle a_\alpha(t) = \int_0^t\pa_ta_\alpha \dd s$, $\alpha = 1,2$,
with $\pa_ta_\alpha$ given by (\ref{def:pata}).
According to the limit form (\ref{eq:limit_fsi2}), the pressure $p$ and the approximate displacement 
$\bs\ua^h$ are related through
\begin{align*}
&-\int_0^T\!\!\int_{\omega}p\phi_3\,\dd x'\dd t = \nonumber\\
%\chi_\tau\vro_s^hh^3\int_0^T\!\! \int_{\omega}\ua_3^h\pa_{tt}\psi_3 \,\dd x'\dd t \\ 
&- \int_0^T\!\!\int_{\Omega_h}\Big(2\mu^h\sym\nabla\bs\ua^h :\imath(\sym\nabla'(h\psi_1,h\psi_2))
+ \frac{2\mu^h\lambda^h}{2\mu^h + \lambda^h}
\diver\bs\ua^h \diver(h\psi_1,h\psi_2)\Big)\,\dd \bx\dd t  \,
\end{align*}
for all test functions $\bs\psi$ 
satisfying $\pa_1\psi_3 + \pa_3\psi_1 = \pa_2\psi_3 + \pa_3\psi_2 = \pa_3\psi_3 = 0$
and $\psi_3 = \phi_3$ on $\omega$.
Furthermore, since $\sym\nabla\bs\ua^h$ has only $2\times2$ nontrivial submatrix, the latter
identity can be written as
\begin{align*}
\int_0^T\!\!\int_{\omega}p\phi_3\,\dd x'\dd t = & %\nonumber\\
%-\chi_\tau\vro_s^hh^3\int_0^T\!\! \int_{\omega}\ua_3^h\pa_{tt}\psi_3 \,\dd x'\dd t \\ 
  \int_0^T\!\!\int_{\Omega_h}\left(2\mu^h\sym\nabla\bs\ua^h :\sym\nabla \bs\psi %(h\psi_1,h\psi_2,\psi_3)
+ \lambda^h\diver\bs\ua^h \diver \bs\psi %(h\psi_1,h\psi_2,\psi_3)
\right)\,\dd \bx\dd t  \\
& - \int_0^T\!\!\int_{\Omega_h}\Big(\frac{(\lambda^h)^2}{2\mu^h + \lambda^h}\diver\bs\ua^h 
\diver(h\psi_1,h\psi_2) + \lambda^h\diver\bs\ua^h \pa_3\psi_3\Big)\,\dd \bx\dd t\,,
\end{align*}
with a test function $\bs\psi = (h\psi_1,h\psi_2,\psi_3)$ which now satisfies only $\psi_3 = \phi_3$ on $\omega$.
Going back to (\ref{eq:modifStokes_weak}) and taking $\bs\psi = (\psi_1,\psi_3,\psi_3)$ in 
further calculations, 
we find the weak form of approximate solutions to be of the same type
as the original weak formulation (\ref{eq:weak_fsi}) with additional residual terms:
\begin{align}
-\vro_f\int_0^T\!\! \int_{\Omega_\eps}\bs\va^\eps\cdot\pa_t\bs\phi\,\dd \bx\dd t \nonumber
+ 2\eta\T\int_0^T\!\!\int_{\Omega_\eps}\sym\nabla\bs\va^\eps:\sym\nabla\bs\phi\,\dd \bx\dd t\\
-\vro_s^h\T^{-1}\int_0^T\!\!\int_{\Omega_h}\pa_t\bs\ua^h\cdot\pa_t\bs\psi\,\dd\bx\dd t 
+ \T\int_0^T\!\!\int_{\Omega_h}\left(2\mu^h\sym\nabla\bs\ua^h :\sym\nabla\bs\psi
+ \lambda^h\diver\bs\ua^h \diver\bs\psi\right)\,\dd \bx\dd t \label{eq:weak_fsi_app}  \\
= \T\int_0^T\!\!\int_{\Omega_\eps}(f_1^\eps\phi_1 + f_2^\eps\phi_2) \,\dd \bx\dd t 
+ \T\int_0^T\!\!\int_{\Omega_\eps}\bs\res_f^\eps\cdot\bs\phi \,\dd \bx\dd t \nonumber
+ \T\int_0^T\!\!\int_{\omega}\bs\res_b^\eps\cdot\bs\phi \,\dd x'\dd t 
+ \langle \bs\res_s^h,\bs\psi\rangle\,,
\end{align}
where $\langle \bs\res_s^h,\bs\psi\rangle$ denotes the structure residual term $\bs\res_s^h$ acting
on a test function $\bs\psi$ as
\begin{align*}
\langle \bs\res_s^h,\bs\psi\rangle &= 
-\vro_s^h\T^{-1}\int_0^T\!\!\int_{\Omega_h}\pa_t\bs\ua^h\cdot\pa_t\bs\psi\,\dd\bx\dd t\\
%+ \chi_\tau\vro_s^hh^3\T\int_0^T\!\! \int_{\omega}\ua_3^h\pa_{tt}\psi_3 \,\dd x'\dd t\\
&\quad + \T\int_0^T\!\!\int_{\Omega_h}\Big(\frac{(\lambda^h)^2}{2\mu^h + \lambda^h}\diver\bs\ua^h 
\diver(\psi_1,\psi_2) + \lambda^h\diver\bs\ua^h \pa_3\psi_3\Big)\,\dd \bx\dd t\,.
\end{align*}

Let us define the fluid error $\bs e_f^\eps:= \bv^\eps - \bs\va^\eps$ 
and the structure error $\bs e_s^h:= \bu^h - \bs\ua^h$.
Subtracting (\ref{eq:weak_fsi_app}) from the original problem (\ref{eq:weak_fsi}), 
in the rescaled time, we find the variational equation for the errors:
\begin{align}
-\vro_f\int_0^T\!\! \int_{\Omega_\eps}\bs e^\eps_f\cdot\pa_t\bs\phi\,\dd \bx\dd t \nonumber
+ 2\eta\T\int_0^T\!\!\int_{\Omega_\eps}\sym\nabla\bs e_f^\eps:\sym\nabla\bs\phi\,\dd \bx\dd t\\
-\vro_s^h\T^{-1}\int_0^T\!\!\int_{\Omega_h}\pa_t\bs e_s^h\cdot\pa_t\bs\psi\,\dd\bx\dd t 
+ \T\int_0^T\!\!\int_{\Omega_h}\left(2\mu^h\sym\nabla\bs e_s^h :\sym\nabla\bs\psi
+ \lambda^h\diver\bs e_s^h \diver\bs\psi\right)\,\dd \bx\dd t \label{eq:weak_fsi_error}   \\
= \T\int_0^T\!\!\int_{\Omega_\eps}f_3^\eps\phi_3 \,\dd \bx\dd t 
- \T\int_0^T\!\!\int_{\Omega_\eps}\bs\res_f^\eps\cdot\bs\phi \,\dd \bx\dd t \nonumber
- \T\int_0^T\!\!\int_{\omega}\bs\res_b^\eps\cdot\bs\phi \,\dd x'\dd t 
- \langle \bs\res_s^h,\bs\psi\rangle\,.
\end{align}
for all test functions $(\bs\phi,\bs\psi)\in \W(0,T;\Omega_{\eps,h})$.  

\subsection{Basic error estimate}
Let us first introduce some notation. For an $L^2$-function $\psi\in L^2(0,h)$ we introduce orthogonal 
decomposition (w.r.t.~$L^2(0,h)$-inner product) denoted by
$\psi = \psi^\even + \psi^\odd$, where $\psi^\even$ and $\psi^\odd$ 
denote the even and the odd part of $\psi$, respectively. Furthermore, functions
$\psi\in L^2(\Omega_h)$ will be considered as $\psi\in L^2(\omega;L^2(0,h))$ and the orthogonal
decomposition $\psi = \psi^\even + \psi^\odd$ will be performed in a.e.~point of $\omega$. 

Our key result for proving Theorem \ref{tm:EE} is an energy type estimate for errors, which
we derive from equation (\ref{eq:weak_fsi_error}) based on a careful selection of test functions.
\begin{proposition}\label{prop:EE}
Let us assume that the fluid volume force verifies assumption (A2) 
%$\|f_3^\eps\|_{L^2(0,T;L^2(\Omega_\eps))} \leq C\eps^{3/2}$, 
%$\|\pa_t\bs f^\eps\|_{L^2(0,T;L^2(\Omega_\eps;\R^3))}\leq C\sqrt{\eps}$, 
then for a.e.~$t\in(0,T)$ we have
\begin{align}
\frac{\vro_f}{4}\int_{\Omega_\eps}|\bs e_f^\eps(t)|^2\dd\bx + \nonumber
\frac{\eta\T}{2}\int_0^t\!\!\int_{\Omega_\eps}|\nabla\bs e_f^\eps|^2\,\dd \bx\dd s
+\frac{\vro_s^h\T^{-2}}{4}\int_{\Omega_h}\left((\pa_t e_{s,\alpha}^\even(t))^2 
+ (\pa_t e_{s,3}^\odd(t))^2\right)\dd\bx \\ 
+ \int_{\Omega_h}\left(\mu^h\left|\sym\nabla ( e_{s,1}^\even,  e_{s,2}^\even, e_{s,3}^\odd)(t)\right|^2
+ \frac{\lambda^h}{2}\left|\diver( e_{s,1}^\even,  e_{s,2}^\even, e_{s,3}^\odd)(t)\right|^2 
\right)\,\dd \bx \label{ineq:error_est2}\\
 \leq C\T\eps^3(h^\gamma + h^{4\gamma - 2\kappa + 4})\,.\nonumber
\end{align}
\end{proposition}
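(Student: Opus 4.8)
The plan is to derive the error estimate (\ref{ineq:error_est2}) directly from the error equation (\ref{eq:weak_fsi_error}) by testing with the errors themselves, but paying careful attention to the parity structure: because the approximate displacement $\bs\ua^h$ is affine in $x_3$ (linear in $x_3$ in the horizontal components, constant in the vertical one), the structure error $\bs e_s^h = \bu^h - \bs\ua^h$ does not vanish at the interface and its even/odd decomposition matters. So I would take the test functions $\bs\phi = \bs e_f^\eps$ for the fluid and, for the structure, only the ``matching parity'' part $\bs\psi = (\pa_t e_{s,1}^\even, \pa_t e_{s,2}^\even, \pa_t e_{s,3}^\odd)$ — these are exactly the components that couple to the fluid trace through the kinematic condition, since $\bv^\eps|_\omega = \pa_t\bu^h|_\omega$ and the approximate pair satisfies the same kinematic relation by construction, so $\bs e_f^\eps = \pa_t\bs e_s^h$ on $\omega$ and only the surviving parity components contribute to the boundary coupling term. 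Integrating in time from $0$ to $t$ and using the trivial initial data, the left-hand side of (\ref{ineq:error_est2}) emerges (with the factor $\tfrac14$ rather than $\tfrac12$ because some terms will have to be absorbed via Young's inequality).

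Next I would estimate the four residual terms on the right-hand side of (\ref{eq:weak_fsi_error}) one by one. The fluid volume-force term $\T\int_0^t\!\int_{\Omega_\eps} f_3^\eps\phi_3\,\dd\bx\dd s$ and the fluid residual term $\T\int_0^t\!\int_{\Omega_\eps}\bs\res_f^\eps\cdot\bs\phi$ are handled using (A1), the bound (\ref{ineq:Residual}), the Poincar\'e and Korn inequalities from Proposition \ref{Poincare}, together with Young's inequality so that the resulting $\|\nabla\bs e_f^\eps\|_{L^2}^2$ contributions get absorbed into the dissipation term $\tfrac{\eta\T}{2}\int_0^t\!\int_{\Omega_\eps}|\nabla\bs e_f^\eps|^2$. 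The boundary residual $\T\int_0^t\!\int_\omega \bs\res_b^\eps\cdot\bs\phi\,\dd x'\dd s$ needs the trace inequality (\ref{2.ineq:eps_T}): since $\bs\res_b^\eps$ is $O(\eps)$ in its first two components and $O(\eps^2)$ in the third, and $\bs\phi|_\omega$ is controlled by $\sqrt\eps\|\pa_3\bs e_f^\eps\|_{L^2(\Omega_\eps)}$, this again produces a harmless term plus an absorbable gradient term. The structure residual $\langle\bs\res_s^h,\bs\psi\rangle$ has an inertial part, controlled using $\vro_s^h = \hat\vro_s h^{-\kappa}$ together with regularity of $w_3$ and the smallness factor coming from the $\T^{-1}$ (recall $\T = h^\tau$ with $\tau<-1$), and a Lam\'e part proportional to $\lambda^h\diver\bs\ua^h$ tested against $h\psi_\alpha$ and $\pa_3\psi_3$, where the extra power of $h$ (from the $h\psi_\alpha$ scaling) and the structure estimate (\ref{ineq:symsg}) deliver the right decay; Young's inequality lets the $\mu^h,\lambda^h$-weighted quadratic parts of $\bs\psi$ be absorbed into the structure elastic energy on the left. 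Keeping track of all the powers of $\eps = h^\gamma$, $h$, $\T = h^\tau$ with $\tau = \kappa - 3\gamma - 3$, and $\kappa$, the dominant contributions assemble into $C\T\eps^3(h^\gamma + h^{4\gamma - 2\kappa + 4})$.

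I would also need a Gr\"onwall-type step to close the estimate: the term $-\vro_f\int_0^t\!\int_{\Omega_\eps}\bs e_f^\eps\cdot\pa_t\bs\phi$ and its structure analogue produce, after the integration by parts in time that turns the test function into the error, not only the clean $\tfrac{\vro_f}{2}\int_{\Omega_\eps}|\bs e_f^\eps(t)|^2$ boundary term but possibly lower-order cross terms involving the non-matching parity parts of $\bs e_s^h$; these latter are controlled by observing that $e_{s,\alpha}^\odd$ and $e_{s,3}^\even$ solve auxiliary relations coming from the remaining components of the error equation and are of strictly higher order in $h$, or alternatively by a direct Gr\"onwall argument on the energy functional. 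Since the error equation is linear and the right-hand side is already time-integrated against an $L^2$-bounded quantity, Gr\"onwall contributes only a harmless multiplicative constant $e^{CT}$, absorbed into $C$.

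The main obstacle I anticipate is the bookkeeping around the parity decomposition and the interface coupling: one must verify that the test function $\bs\psi$ built only from the even/odd surviving components is still an admissible element of $\W(0,T;\Omega_{\eps,h})$ — in particular that the kinematic constraint $\bs\phi(t) = \bs\psi(t)$ on $\omega$ is met, which hinges on the fact that at $z_3 = 0$ (equivalently $x_3 = 0$) the odd part of $e_{s,\alpha}$ and the even part of $e_{s,3}$ have the ``right'' trace behaviour, and that $\pa_t\bs e_s^h|_\omega = \bs e_f^\eps|_\omega$ only sees the matching parities. Closely related is ensuring that the Lam\'e term $\lambda^h\diver\bs e_s^h\,\diver\bs\psi$ with this truncated $\bs\psi$ still yields a nonnegative (or controllably-signed) quadratic form in $(e_{s,1}^\even, e_{s,2}^\even, e_{s,3}^\odd)$ after discarding the cross terms — this is where the explicit affine structure of $\bs\ua^h$ and a short computation with $\diver$ of even/odd fields is essential. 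Once that algebraic point is settled, the rest is the routine (if lengthy) exponent-tracking described above.
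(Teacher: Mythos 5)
Your choice of the structure test function $\bs\psi$ built from the parity-matching components $(\pa_t e_{s,1}^\even, \pa_t e_{s,2}^\even, \pa_t e_{s,3}^\odd)$ is exactly the key idea used in the paper (modulo a harmless $\T^{-1}$ normalization). However, there is a genuine gap in the fluid side of the test pair: you take $\bs\phi = \bs e_f^\eps$ and claim that $(\bs\phi,\bs\psi)$ is admissible, i.e.\ that $\bs e_f^\eps = \bs\psi$ on $\omega$. This fails for two separate reasons. First, $\bs e_f^\eps \neq \pa_t\bs e_s^h$ on $\omega$: the approximate solutions $\bs\va^\eps$ and $\bs\ua^h$ do \emph{not} satisfy the exact kinematic interface condition (indeed the resulting mismatch is precisely the source of the boundary layer behind the $O(\sqrt\eps)$ rate). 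Second, even granting equality of traces, the parity decomposition is taken with respect to $(x_3 - h/2)$, so neither $e_{s,\alpha}^\odd$ nor $e_{s,3}^\even$ vanishes at the bottom face $x_3 = 0$; the truncated $\bs\psi|_\omega$ therefore does not coincide with $\pa_t\bs e_s^h|_\omega$. You flag this admissibility issue in your last paragraph as something ``to verify'', but it is not a detail that works out in your favor — it genuinely fails.

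The paper's proof resolves this by introducing a divergence-free corrector $\bs\varphi$ to the fluid test function, $\bs\phi = \bs e_f^\eps + \bs\varphi$, with $\bs\varphi$ designed precisely to carry the trace mismatch ($\varphi_\alpha|_\omega = -\T^{-1}\pa_t u_\alpha^\odd|_\omega$, $\phi_3|_\omega = -\T^{-1}\pa_t e_{s,3}^\even|_\omega$, $\bs\varphi = 0$ at the channel bottom). Constructing and estimating this corrector is a substantive step (Lemma~\ref{lemma:corr_phi} proves $\|\nabla\bs\varphi\|_{L^\infty(0,T;L^2(\Omega_\eps))} \leq C\eps^{5/2}$, using the Griso decomposition, a trace inequality on the thin structure domain, and the higher-order energy estimate~(\ref{ineq:energEHH}), plus an auxiliary bound on the mean of $\pa_t u_3^h$). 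The resulting contributions of $\bs\varphi$ to both the quadratic terms and the residual pairings are of exactly the same order as the target bound, so the corrector cannot be dispensed with or absorbed by a Gr\"onwall argument. Without it, the boundary pairing with $\bs\res_b^\eps$ and the time-boundary term $\vro_f\int\bs e_f^\eps(t)\cdot\bs\varphi$ are simply absent from your bookkeeping, and the estimate does not close.
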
 
\begin{proof}[Proof of Proposition \ref{prop:EE}] 
Since the elasticity equations appear to be more delicate for the analysis, we first choose
\begin{equation}\label{def:test_psi}
\bs\psi = \T^{-1}(\pa_t e_{s,1}^\even, \pa_t e_{s,2}^\even,\pa_t e_{s,3}^\odd)\,,
\end{equation}   
where superscripts denote even and odd components of the orthogonal decomposition
with respect to the variable $(x_3 - h/2)$.  Observe from (\ref{def:ua_h}) 
that, up to spatially constant translations, components of the approximate displacement $\bs\ua^h$ 
are respectively odd, odd and even with respect to $(x_3-h/2)$.
The idea of using this particular test function comes from the fact that such $\bs\psi$
annihilates a large part of the structure residual term $\bs\res_s^h$ on the right hand side in (\ref{eq:weak_fsi_error})
and the rest can be controlled (cf.~estimate (\ref{ineq:res_struct}) below).  

% In construction of the test function for the fluid part, first observe that approximate solutions
% do not satisfy the kinematic interface condition in the horizontal components, 
% i.e.~$\va^\eps_\alpha \neq \pa_t\ua^h_\alpha$ on $\omega\times(0,T)$ and therefore
% $(\bs\va^\eps,\bs\ua^h)$ does not belong to the space $\V(0,T;\Omega_{\eps,h})$. 
% For the third component however,
% the interface condition is satisfied. Namely, using (\ref{eq:reynolds1}) and the definition of $\va_3^\eps$
% we find: 
% \begin{equation*}
% \T^{-1}\pa_t\ua_3^h = h^{-\tau + \kappa - 3}\pa_t w_3 
% = -h^{3\gamma}\int_{-1}^0 (\pa_1 v_1 + \pa_2 v_2)\,\dd y_3 = \va_3^\eps\quad\text{a.e.~on }\omega\times(0,T)\,.
% \end{equation*}
In order to match interface values of $\bs\psi$, the fluid test function $\bs\phi$ 
will be accordingly corrected fluid error, i.e.~we take
\begin{equation}   
\bs\phi = \bs e_f^\eps + \bs\varphi\,,
\end{equation}
where the correction $\bs\varphi$ satisfies
\begin{align}
\diver\bs\varphi &= 0\quad\text{on }\ \Omega_\eps\times(0,T)\,,\label{eq:divcorr1}\\ 
\left.\varphi_\alpha\right|_{\omega\times(0,T)} &= -\T^{-1}\left.\pa_tu_\alpha^\odd\right|_{\omega\times(0,T)},\\
%- \eps^2\pa_ta_\alpha + \left.\va_\alpha^\eps\right|_{\omega\times(0,T)}\,,\quad \alpha = 1,2\,
\left.\phi_3\right|_{\omega\times(0,T)} &= -\T^{-1}\left.\pa_te_{s,3}^\even\right|_{\omega\times(0,T)}\,,\\
\left.\bs\varphi\right|_{\{x_3=-\eps\}\times(0,T)} &= 0\,, \label{eq:divcorr3}
\end{align}
and $\bs\varphi(\cdot,t)$ is $\omega$-periodic for every $t\in(0,T)$. 
This choice of $\bs\varphi$ ensures
the kinematic boundary condition $\bs\phi = \bs\psi$ a.e.~on $\omega\times(0,T)$. 
Moreover, the corrector $\bs\varphi$ satisfies the uniform bound
\begin{lemma}\label{lemma:corr_phi}
\begin{equation}\label{ineq:corr_phi}
\|\nabla\bs\varphi\|_{L^\infty(0,T;L^2(\Omega_\eps))} \leq C\eps^{5/2}\,,
\end{equation}
where $C>0$ is independent of $\bs\varphi$ and $\eps$.
\end{lemma}
\begin{proof}
Following \cite[Lemma 9]{MaPa01}, solution $\bs\varphi$ of the problem (\ref{eq:divcorr1})--(\ref{eq:divcorr3}) 
can be estimated as
\begin{align}\label{ineq:test_corr_bound}
\|\nabla\bs\varphi\|_{L^\infty(0,T;L^2(\Omega_\eps))} 
& \leq C\T^{-1}\left(\sum_{\alpha=1}^2\frac{1}{\sqrt{\eps}}\|\pa_tu^\odd_\alpha\|_{L^\infty(0,T;L^2(\omega))} 
%+ \|\va_\alpha^\eps - \eps^2\pa_ta_\alpha\|_{L^2(\omega\times(0,T))}\right) \\ 
+ \|\pa_te_{s,3}^\even\|_{L^\infty(0,T;L^2(\omega))}\right) \,, 
\end{align}
where $C>0$ is independent of $\eps$ and $t$.
Let us now estimate the right hand side of (\ref{ineq:test_corr_bound}). 
First, employing inequalities on thin domains: the trace inequality from \cite{LeMu11},
the Poincar\'e and the Korn inequality from Proposition \ref{Poincare}, respectively,   
we find
\begin{align}\label{ineq:corr_phi1}
\sum_{\alpha=1}^2\|\pa_tu^\odd_\alpha\|^2_{L^\infty(0,T;L^2(\omega))}
& \leq C\left(\frac{1}{h}\|\pa_t\bu^\odd\|_{L^\infty(0,T;L^2(\Omega_h))}^2 + 
h\|\nabla\pa_t\bu^\odd\|^2_{L^\infty(0,T;L^2(\Omega_h))} \right) \\ \nonumber
&\leq C\left(\frac{1}{h} \|\nabla\pa_t\bu^\odd\|_{L^\infty(0,T;L^2(\Omega_h))}^2 + 
\frac{1}{h} \|\sym\nabla\pa_t\bu^\odd\|_{L^\infty(0,T;L^2(\Omega_h))} \right)\\ \nonumber
&\leq \frac{C}{h^3}\|\sym\nabla\pa_t\bu^\odd\|_{L^\infty(0,T;L^2(\Omega_h))}^2
\leq \frac{C}{h^3}\|\sym\nabla\pa_t\bu\|_{L^\infty(0,T;L^2(\Omega_h))}^2 \\ \nonumber 
&\leq C\T\eps^3h^{\kappa-3}\,.
\end{align}
In the latter inequality we used the time rescaled higher-order energy estimate 
(\ref{ineq:energEHH}).

Utilizing the Griso decomposition for the third component
\begin{align*}
\pa_te_{s,3}^\even = \pa_t(w_3^h+\tilde u_3^\even) - h^{\kappa-3}\pa_tw_3 
= \pa_t u_3^h - \pa_t\tilde u_3^\odd - h^{\kappa-3}\pa_tw_3
\end{align*}
and estimating the second term by using the trace inequality \cite{LeMu11}, we have 
\begin{align*}
\|\pa_te_{s,3}^\even\|_{L^\infty(0,T;L^2(\omega))} ^2
% & = \|\pa_t(w_3^h+\tilde u_3^\even) - h^{\kappa-3}\pa_tw_3\|^2_{L^2(\omega)} \\
% &\leq \|\pa_t(w_3^h+\tilde u_3^\even)\|^2_{L^2(\omega)} + h^{2\kappa-6}\|\pa_tw_3\|^2_{L^2(\omega)} \\
&\leq \frac{C}{h}\|\pa_t(u_3^h - \tilde u_3^\odd)\|^2_{L^\infty(0,T;L^2(\Omega_h))}  
+ Ch\|\nabla\pa_t(u_3^h - \tilde u_3^\odd)\|^2_{L^\infty(0,T;L^2(\Omega_h))}  \\
&\quad + h^{2\kappa-6}\|\pa_tw_3\|_{L^\infty(0,T;L^2(\omega))} \\
&\leq \frac{C}{h}\|\pa_tu_3^h\|^2_{L^\infty(0,T;L^2(\Omega_h))} 
+ Ch\|\nabla\pa_tu_3^h\|^2_{L^\infty(0,T;L^2(\Omega_h))} 
 + \frac{C}{h}\|\pa_t\tilde u_3^\odd\|^2_{L^\infty(0,T;L^2(\Omega_h))} \\
&\quad + Ch\|\nabla\pa_t\tilde u_3^\odd\|^2_{L^\infty(0,T;L^2(\Omega_h))} 
+ h^{2\kappa-6}\|\pa_tw_3\|_{L^\infty(0,T;L^2(\omega))}\,.
\end{align*}

Performing the Griso decomposition of the structure velocity $\pa_t\bu^h$ and 
employing the Griso estimates (cf.~\ref{app:Griso_estimate}), the time rescaled 
higher-order energy inequality 
(\ref{ineq:energEHH}) implies
\begin{align*}
\frac{1}{h^2}\|\pa_t\tilde \bu^h\|^2_{L^\infty(0,T;L^2(\Omega_h))} + 
\|\nabla\pa_t\tilde \bu^h\|^2_{L^\infty(0,T;L^2(\Omega_h))} \leq C\T \eps^3 h^\kappa
\end{align*}
and $\|\pa_tw_3\|_{L^\infty(0,T;L^2(\omega))}\leq C$.
Using the latter together with the Poincar\'e inequality we further estimate
\begin{align}\label{ineq:pate3}
\|\pa_te_{s,3}^\even\|_{L^\infty(0,T;L^2(\omega))}^2
& \leq \frac{C}{h}\|\nabla\pa_tu_3^h\|^2_{L^\infty(0,T;L^2(\Omega_h))} 
+ \frac{C}{h^2}\esssup_{t\in(0,T)}\left|\int_{\Omega_h}\pa_t u_3^h\dd \bx \right|^2\\
&\qquad + C\T\eps^3h^{\kappa+1} + Ch^{2\kappa-6}\,. \nonumber
\end{align}
In order to conclude the estimate (\ref{ineq:corr_phi}) we need one more result.

\begin{lemma} The mean values of the vertical structure displacement and velocity satisfy
\begin{equation*}
\esssup_{t\in(0,T)}\left|\int_{\Omega_h}u_3^h\dd \bx\right| + 
\esssup_{t\in(0,T)}\left|\int_{\Omega_h}\pa_tu_3^h\dd \bx\right| \leq Ch^\kappa\,.
\end{equation*}
\end{lemma}
\begin{proof}
%\subsection{Estimation of the mean value}
Let us define 
\begin{equation*}
U_3^h(x_3,t):=\int_\omega u_3^h(x',x_3,t)\dd x'\,,\quad x_3\in(0,h)\,,\ t\in(0,T)\,.
\end{equation*}
Then for a.e.~$t \in(0,T)$, using the Cauchy-Schwarz inequality we have     
\begin{align*}
\left|\int_{\Omega_h}u_3^h(t)\dd \bx\right| = \int_0^h U_3^h(x_3,t)\dd x_3 \leq \sqrt h\|U_3^h(t)\|_{L^2(0,h)}\,.
\end{align*}
Since $U_3^h(0,t) = \int_\omega u_3^h(x',0,t)\dd x' = 0$, the Poincare inequality on $(0,h)$ gives
\begin{equation*}
\left|\int_{\Omega_h}u_3^h(t)\dd \bx\right| \leq Ch^{3/2}\|\pa_3 U_3^h(t)\|_{L^2(0,h)}\,.
\end{equation*}
Next, using the Jensen's inequality and the time rescaled energy estimate (\ref{ineq:energEH}) we find
\begin{align*}
\|\pa_3U_3^h(t)\|_{L^2(0,h)}^2 = \int_0^h\left( \pa_3U_3^h(t)\right)^2\dd x_3 = 
\int_0^h\left( \int_\omega \pa_3 u_3^h(x',x_3,t)\dd x'\right)^2\dd x_3\\
\leq \int_{\Omega_h}\left(\pa_3 u_3^h(x',x_3,t) \right)^2\dd \bx \leq C\T\eps^3h^\kappa\,.
\end{align*}
Therefore, employing the latter inequality together with relation (\ref{eq:rel_tau}), we conclude 
\begin{equation*}
\esssup_{t\in(0,T)}\left|\int_{\Omega_h}u_3^h\dd \bx\right| 
\leq Ch^{3/2}\|\pa_3 U_3^h\|_{L^\infty(0,T;L^2(0,h))}\leq Ch^\kappa\,.
\end{equation*}
Due to the time rescaled higher-order energy estimate (\ref{ineq:energEHH}), 
which is of the same type as (\ref{ineq:energEH}), the analogous conclusion
can be performed also for $\pa_t u_3^h$.
\end{proof}
Going back to (\ref{ineq:pate3}) and applying the previous lemma with the Korn inequality and
the time rescaled energy estimate (\ref{ineq:energEHH}), we obtain
\begin{align}\label{ineq:corr_phi2}
\|\pa_te_{s,3}^\even\|_{L^\infty(0,T;L^2(\omega))}^2
& \leq \frac{C}{h^3}\|\sym\nabla\pa_tu_3^h\|^2_{L^\infty(0,T;L^2(\Omega_h))} + Ch^{2\kappa - 2} + Ch^{2\kappa-6} \\
&\leq  C\T\eps^3h^{\kappa-3} + Ch^{2\kappa-6}\,.\nonumber
% = C(h^{5\gamma} + 2h^{6\gamma}) \leq C\eps^5\,.
\end{align}
Combining (\ref{ineq:corr_phi1}) and (\ref{ineq:corr_phi2}), from estimate (\ref{ineq:test_corr_bound}) and relation
(\ref{eq:rel_tau}) we conclude 
\begin{equation}\label{ineq:test_corr_bound2}
\|\nabla\bs\varphi\|_{L^\infty(0,T;L^2(\Omega_h))}^2 
\leq C\left(\T^{-1}\eps^2h^{\kappa-3} + \T^{-1}\eps^3h^{\kappa+1} + \T^{-2}h^{2\kappa - 6} \right) 
\leq C\eps^5\,,
\end{equation}
which finishes the proof of Lemma \ref{lemma:corr_phi}. 
\end{proof}

Now we continue with the proof of Proposition \ref{prop:EE}.
Utilizing the above constructed test functions $(\bs\phi,\bs\psi)$ in the variational equation
(\ref{eq:weak_fsi_error}) and using the orthogonality property 
of the decomposition to even and odd functions with respect to the variable $(x_3 - h/2)$, then 
for a.e.~$t\in(0,T)$ we have
\begin{align*}
\frac{\vro_f}{2}\int_{\Omega_\eps}|\bs e_f^\eps(t)|^2\dd\bx +
2\eta\T\int_0^t\!\!\int_{\Omega_\eps}|\sym\nabla\bs e_f^\eps|^2\,\dd \bx\dd s
+\frac{\vro_s^h\T^{-2}}{2}\int_{\Omega_h}\left((\pa_t e_{s,\alpha}^\even)^2 
+ (\pa_t e_{s,3}^\odd)^2\right)\dd\bx \\ 
+ \int_{\Omega_h}\left(\mu^h\left|\sym\nabla ( e_{s,1}^\even,  e_{s,2}^\even, e_{s,3}^\odd)\right|^2
+ \frac{\lambda^h}{2}\left|\diver( e_{s,1}^\even,  e_{s,2}^\even, e_{s,3}^\odd)\right|^2
\right)(t)\,\dd \bx
= -\vro_f\int_{\Omega_\eps}\bs e^\eps_f(t)\cdot\bs\varphi\,\dd \bx \\
- 2\eta\T\int_0^t\!\!\int_{\Omega_\eps}\sym\nabla\bs e_f^\eps:\sym\nabla\bs\varphi\,\dd \bx\dd s
+\T\int_0^t\!\!\int_{\Omega_\eps}f_3^\eps (e_{f,3}^\eps + \varphi_3) \,\dd \bx\dd s \\
- \T\int_0^t\!\!\int_{\Omega_\eps}\bs\res_f^\eps\cdot(\bs e_f^\eps + \bs\varphi) \,\dd \bx\dd s 
- \T\int_0^t\!\!\int_{\omega}\bs\res_b^\eps\cdot(\bs e_f^\eps + \bs\varphi) \,\dd x'\dd s
- \langle \bs\res_s^h,\bs\psi\rangle\,.
\end{align*}
Using the Cauchy-Schwarz and the Young inequality together with inequalities 
from Proposition \ref{Poincare}
we estimate the right hand side of the latter equation as follows:
\begin{align}
\frac{\vro_f}{4}\int_{\Omega_\eps}|\bs e_f^\eps(t)|^2\dd\bx + \nonumber
\eta\T\int_0^t\!\!\int_{\Omega_\eps}|\sym\nabla\bs e_f^\eps|^2\,\dd \bx\dd s
+\frac{\vro_s^h\T^{-2}}{2}\int_{\Omega_h}\left((\pa_t e_{s,\alpha}^\even)^2
+ (\pa_t e_{s,3}^\odd)^2\right)(t)\dd\bx \\ 
+ \int_{\Omega_h}\left(\mu^h\left|\sym\nabla ( e_{s,1}^\even,  e_{s,2}^\even, e_{s,3}^\odd)\right|^2
+ \frac{\lambda^h}{2}\left|\diver( e_{s,1}^\even,  e_{s,2}^\even, e_{s,3}^\odd)\right|^2 \nonumber
\right)(t)\,\dd \bx 
\leq \frac{\vro_f}{4}\int_{\Omega_\eps}|\bs \varphi(t)|^2\dd\bx \\ \label{ineq:error_est1}
 + \eta\T\int_0^t\!\!\int_{\Omega_\eps}|\sym\nabla\bs \varphi|^2\,\dd \bx\dd s
+ \T\int_0^t\|f_3^\eps\|_{L^2(\Omega_\eps)}\left(\|e_{f,3}^\eps\|_{L^2(\Omega_\eps)} 
+ \eps\|\nabla\varphi_3\|_{L^2(\Omega_\eps)}\right)\dd s \\
+ C\T\int_0^t\eps\|\bs\res_f^\eps\|_{L^2(\Omega_\eps)}\left(\|\nabla\bs e_f^\eps\|_{L^2(\Omega_\eps)}
+ \|\nabla\bs\varphi\|_{L^2(\Omega_\eps)}\right)\dd s \nonumber
+ \left|\T\int_0^t\!\!\int_{\omega}\bs\res_b^\eps\cdot \bs e_f^\eps\,\dd s\right|\\
+ \T\int_0^t\eps^{1/2}\|\bs\res_b^\eps\|_{L^2(\omega)}
\|\nabla\bs\varphi\|_{L^2(\Omega_\eps)} \dd s     
+ |\langle \bs\res_s^h,\bs\psi\rangle| \,.\nonumber
\end{align}

The right hand side in (\ref{ineq:error_est1}) is further estimated term by term as follows.
The first two terms are bounded by 
\begin{align}
\frac{\vro_f}{4}\int_{\Omega_\eps}|\bs \varphi(t)|^2\dd\bx 
 + \eta\T\int_0^t\!\!\int_{\Omega_\eps}|\sym\nabla\bs \varphi|^2\,\dd \bx\dd s 
 \leq C\eps^7 + C\T\eps^5 \leq C\T\eps^5\,.
\end{align}
Higher-order energy estimate (\ref{ineq:energEHHS}) directly provides
\begin{align}\label{ineq:HigherOrder}
%\T^{-2}\|\pa_t\bv^{\eps}\|^2_{L^\infty(0,T;L^2(\Omega_{\eps}))}
\|\nabla\pa_{\alpha}\bv^{\eps}\|^2_{L^2(0,T;L^2(\Omega_{\eps}))}\leq C\eps^3\,.
\end{align}
Then the Poincar\'e inequality on thin domains implies
\begin{align*}
\|\pa_{\alpha}\bv^{\eps}\|_{L^2(0,T;L^2(\Omega_{\eps}))}\leq C\eps^{5/2}\,.
\end{align*}
Another application of the Poincar\'e inequality combined with divergence free 
condition yields
\begin{align}\label{ineq:VertEstimate}
\|v^{\eps}_3\|_{L^2(0,T;L^2(\Omega_{\eps}))}\leq C\eps^{7/2}\,.
\end{align}
 %from \eqref{ineq:HigherOrder} we have \todo{(discuss more)}
The latter trivially implies
\begin{align*}
\|e_{f,3}^{\eps}\|_{L^2(0,T;L^2(\Omega_{\eps}))} = %\|v^\eps_3\|_{L^2(0,T;L^2(\Omega_{\eps}))} + 
\|v^\eps_3 - \va_3^\eps\|_{L^2(0,T;L^2(\Omega_{\eps}))} \leq C\eps^{7/2}\,.
\end{align*}
Therefore, the force term can be bounded as
\begin{align}
 \T\int_0^t\|f_3^\eps\|_{L^2(\Omega_\eps)}\left(\|e_{f,3}^\eps\|_{L^2(\Omega_\eps)} 
 + \eps\|\nabla\varphi_3\|_{L^2(\Omega_\eps)}\right)\dd s \leq C\T\eps^4\,.
\end{align}
For the fluid residual term we employ the apriori estimates to conclude
\begin{align}
C\T\int_0^t\eps\|\bs\res_f^\eps\|_{L^2(\Omega_\eps)}\left(\|\nabla\bs e_f^\eps\|_{L^2(\Omega_\eps)}
+ \|\nabla\bs\varphi\|_{L^2(\Omega_\eps)}\right)\dd s \leq C\T\eps^4\,.
\end{align}
For the boundary residual term, which is only $O(\eps)$ in the leading order, we invoke the Griso 
decomposition to conclude that 
\begin{equation*}
\left.\bs e_f^\eps\right|_\omega = \left.\T^{-1}\pa_t\bs u^h\right|_\omega - \eps^2\pa_ta_\alpha
\end{equation*}
is dominantly constant on $\omega$. Due to the interface condition 
$\displaystyle \int_\omega \pa_3 v_\alpha \, \dd z' = 0$, the leading order term vanishes and the rest
can be controlled as
\begin{align}
\left|\T\int_0^t\!\!\int_{\omega}\bs\res_b^\eps\cdot \bs e_f^\eps\,\dd s\right|
 + \T\int_0^t\eps^{1/2}\|\bs\res_b^\eps\|_{L^2(\omega)}
\|\nabla\bs\varphi\|_{L^2(\Omega_\eps)} \dd s \leq C\T\eps^4\,.
\end{align}
Finally, due to the orthogonality properties and integrating by parts in time,   
for the structure residual term we have
\begin{align*}
\langle \bs\res_s^h,\bs\psi\rangle &= 
\vro_s^h\T^{-2}\int_0^t\!\!\int_{\Omega_h}h^{\kappa-3-\gamma}(\pa_ta_1\pa_{tt} e_{s,1}^\even 
+ \pa_ta_2\pa_{tt} e_{s,2}^\even)\dd\bx\dd s \\
&= \vro_s^h\T^{-2}\int_{\Omega_h}h^{\kappa-3-\gamma}(\pa_ta_1\pa_{t} e_{s,1}^\even 
+ \pa_ta_2\pa_{t} e_{s,2}^\even)(t)\dd\bx \\
&\qquad - \vro_s^h\T^{-2}\int_0^t\!\!\int_{\Omega_h}h^{\kappa-3-\gamma}(\pa_{tt}a_1\pa_{t} e_{s,1}^\even 
+ \pa_{tt}a_2\pa_{t} e_{s,2}^\even)\dd\bx\dd s\,.
\end{align*}
The latter can be estimated as
\begin{align}\label{ineq:res_struct}
\left|\langle \bs\res_s^h,\bs\psi\rangle\right| \leq C\vro_s^h\T^{-2}h^{2\kappa - 6 - 2\gamma + 1} 
+ \frac{\vro_s^h\T^{-2}}{4}\int_{\Omega_h}\left((\pa_{t} e_{s,1}^\even)^2 + (\pa_{t} e_{s,2}^\even)^2 \right)(t)\dd\bx\\
+ \vro_s^h\T^{-2}\int_0^t\!\!\int_{\Omega_h}\left((\pa_{t} e_{s,1}^\even)^2 
+ (\pa_{t} e_{s,2}^\even)^2 \right)\dd\bx\dd s\,. \nonumber
\end{align} 
Going back to (\ref{ineq:error_est1}) and employing previously established bounds together with the
Gr\"onwall inequality, we find
\begin{align}
\frac{\vro_f}{4}\int_{\Omega_\eps}|\bs e_f^\eps(t)|^2\dd\bx + \nonumber
\eta\T\int_0^t\!\!\int_{\Omega_\eps}|\sym\nabla\bs e_f^\eps|^2\,\dd \bx\dd s
+\frac{\vro_s^h\T^{-2}}{4}\int_{\Omega_h}\left((\pa_t e_{s,\alpha}^\even)^2 
+ (\pa_t e_{s,3}^\odd)^2\right)\dd\bx \\ 
+ \int_{\Omega_h}\left(\mu^h\left|\sym\nabla ( e_{s,1}^\even,  e_{s,2}^\even, e_{s,3}^\odd)\right|^2
+ \frac{\lambda^h}{2}\left|\diver( e_{s,1}^\even,  e_{s,2}^\even, e_{s,3}^\odd)\right|^2 
\right)\,\dd \bx \label{ineq:error_est2}\\
\leq C\T(\eps^4 + h^{7\gamma - 2\kappa + 4 }) = C\T\eps^3(h^\gamma + h^{4\gamma - 2\kappa + 4})\,.\nonumber
 %C\T\eps^4(1+h^{\gamma + 2-\kappa})\,,
\end{align}

In order to finish with the proof of Proposition \ref{prop:EE}, we still need to estimate tangential
fluid errors on the interface. Namely,
\begin{align*}
\int_0^t\!\!\int_{\Omega_\eps}|\sym\nabla\bs e_f^\eps|^2\,\dd \bx\dd s = 
\frac12\int_0^t\!\!\int_{\Omega_\eps}|\nabla\bs e_f^\eps|^2\,\dd \bx\dd s 
+ \int_0^t\!\!\int_{\omega}\left(e_{f,1}^\eps\pa_1e_{f,3}^\eps + e_{f,2}^\eps\pa_2e_{f,3}^\eps\right)\dd x'\dd s\,.
\end{align*}
The interface terms are then estimated as
\begin{align*}
\left| \int_0^t\!\!\int_{\omega}e_{f,\alpha}^\eps\pa_\alpha e_{f,3}^\eps\dd x'\dd s \right|
%&\leq \|e_{f,\alpha}^\eps\|_{L^2(0,t;L^2(\omega))}\|\pa_\alpha e_{f,3}^\eps\|_{L^2(0,t;L^2(\omega))}\\
&\leq \eps\|\pa_3e_{f,\alpha}^\eps\|_{L^2(0,t;L^2(\Omega_\eps))}
\|\pa_3\pa_\alpha e_{f,3}^\eps\|_{L^2(0,t;L^2(\Omega_\eps))}\\
& \leq \frac{\eta}{4}\|\pa_3e_{f,\alpha}^\eps\|_{L^2(0,t;L^2(\Omega_\eps))}^2 
+ C\eps^2\|\pa_3\pa_\alpha e_{f,3}^\eps\|_{L^2(0,t;L^2(\Omega_\eps))}^2\\
&\leq \frac{\eta}{4}\|\pa_3e_{f,\alpha}^\eps\|_{L^2(0,t;L^2(\Omega_\eps))}^2 
+ C\eps^5\,,
\end{align*}
where the last bound follows from (\ref{ineq:HigherOrder}). Employing this estimate into (\ref{ineq:error_est2})
we arrive to
\begin{align*}
\frac{\vro_f}{4}\int_{\Omega_\eps}|\bs e_f^\eps(t)|^2\dd\bx + \nonumber
\frac{\eta\T}{4}\int_0^t\!\!\int_{\Omega_\eps}|\nabla\bs e_f^\eps|^2\,\dd \bx\dd s
+\frac{\vro_s^h\T^{-2}}{4}\int_{\Omega_h}\left((\pa_t e_{s,\alpha}^\even)^2 
+ (\pa_t e_{s,3}^\odd)^2\right)\dd\bx \\ 
+ \int_{\Omega_h}\left(\mu^h\left|\sym\nabla ( e_{s,1}^\even,  e_{s,2}^\even, e_{s,3}^\odd)\right|^2
+ \frac{\lambda^h}{2}\left|\diver( e_{s,1}^\even,  e_{s,2}^\even, e_{s,3}^\odd)\right|^2 
\right)\,\dd \bx \leq  C\T\eps^3(h^\gamma + h^{4\gamma - 2\kappa + 4})\,,
 %C\T\eps^4(1+h^{\gamma + 2-\kappa})\,,
\end{align*}
which finishes the proof of Proposition \ref{prop:EE}.
\end{proof}

\subsection{Error estimates for fluid velocities}

Let us now start with the proof of Theorem \ref{tm:EE}.
Proposition \ref{prop:EE} directly implies
\begin{align*}
%\|\nabla e_{f,\alpha}^\eps\|_{L^2(0,T;L^2(\Omega_\eps))}^2 
\|\nabla \bs e_{f}^\eps\|_{L^2(0,T;L^2(\Omega_\eps))}^2 \leq C\eps^3(h^\gamma + h^{4\gamma - 2\kappa + 4})\,,
\end{align*}
while the Poincar\'e inequality then gives
\begin{equation*}
\|e_{f,\alpha}^\eps\|_{L^2(0,T;L^2(\Omega_\eps))}^2 \leq C\eps^5(h^\gamma + h^{4\gamma - 2\kappa + 4})\,.
\end{equation*}
The latter implies the desired error estimate
\begin{equation}
\|e_{f,\alpha}^\eps\|_{L^2(0,T;L^2(\Omega_\eps))} 
\leq C\eps^{5/2}h^{\min\{\gamma/2,\,(2\gamma-\kappa+2)_+\}}\,,\quad \alpha=1,2\,,
\end{equation} 
where $(2\gamma-\kappa+2)_+ = \max\{2\gamma-\kappa+2,\,0\}$.

\subsection{Error estimates for structure displacements}
For the structure displacement error, Proposition \ref{prop:EE} provides only
\begin{equation*}
\|\sym\nabla ( e_{s,1}^\even,  e_{s,2}^\even, e_{s,3}^\odd)\|_{L^\infty(0,T;L^2(\Omega_h))}^2
\leq C\left(h^{2\kappa + \gamma - 3} + h^{4\gamma+1}\right)\,.
\end{equation*}
According to the Griso decomposition for $u_\alpha^h$, we have
\begin{equation*}
e_{s,\alpha}^\even = w_\alpha^h + \tilde u_\alpha^\even - h^{\kappa-3-\gamma}a_\alpha\,,
\quad \alpha=1,2\,.
\end{equation*}
where $\tilde u_\alpha^\even$ denotes the even part of the warping $\tilde{u}_\alpha$.
Employing now the Griso decomposition for the error $e_{s,\alpha}^\even$ together with the
Griso estimate for the corresponding elementary plate displacement, we have
\begin{align*}
\|\sym\nabla'(w_1^h + \frac1h\int_0^h\tilde u_1^\even \dd x_3,w_2^h+ \frac1h\int_0^h\tilde u_2^\even \dd x_3)\|_{L^\infty(0,T;L^2(\Omega_h))}^2 
&\leq \|\sym\nabla'(e_{s,1}^\even,e_{s,2}^\even)\|_{L^\infty(0,T;L^2(\Omega_h))}^2\\
&\leq C\left(h^{2\kappa + \gamma - 3} + h^{4\gamma+1}\right)\,,
\end{align*} 
i.e.
\begin{equation*}
\|\sym\nabla'(w_1^h + \frac1h\int_0^h\tilde u_1^\even \dd x_3,w_2^h+ \frac1h\int_0^h\tilde u_2^\even \dd x_3)\|_{L^\infty(0,T;L^2(\omega))}^2 
\leq C\left(h^{2\kappa + \gamma - 4} + h^{4\gamma}\right)\,.
\end{equation*}
According to the Korn inequality and the Griso estimate for the warping terms $\tilde u_\alpha^\even$, 
for the sequence of spatially constant functions 
$(a_\alpha^h)\subset L^\infty(0,T)$ we have
\begin{equation}\label{ineq:w1w2}   
\|(w_1^h-a_1^h,w_2^h-a_2^h)\|_{L^\infty(0,T;L^2(\omega))}^2 
\leq C\left(h^{2\kappa + \gamma - 4} + h^{4\gamma} + h^{2\kappa-2}\right)\,.  
\end{equation}

Again the Griso decomposition and a priori estimate of the warping terms $\tilde{u}_\alpha^h$ imply
\begin{align*}
\|e_{s,1}^h - w_1^h + h^{\kappa-3-\gamma}a_1 - (x_3 - \frac{h}{2})(h^{\kappa-3}\pa_1w_3 + r_2^h)\|_{L^\infty(0,T;L^2(\Omega_h))}^2 &= 
\|\tilde{u}_1^h\|_{L^\infty(0,T;L^2(\Omega_h))}^2 \\ & \leq Ch^{2\kappa-1}\,,\\
\|e_{s,2}^h - w_2^h +  h^{\kappa-3-\gamma}a_2 - (x_3 - \frac{h}{2})(h^{\kappa-3}\pa_2w_3 - r_1^h)\|_{L^\infty(0,T;L^2(\Omega_h))}^2 &= 
\|\tilde{u}_2^h\|_{L^\infty(0,T;L^2(\Omega_h))}^2\\ &\leq Ch^{2\kappa-1}\,.
\end{align*}
Using the triangle inequality and estimate (\ref{ineq:w1w2}) we have
\begin{equation}\label{ineq:disperror_alpha}
\|e_{\alpha,s}^h\|_{L^\infty(0,T;L^2(\Omega_h))}^2 
\leq Ch^{2\kappa-3}\left(h^{2} + h^{\gamma} + h^{4\gamma + 4 - 2\kappa} \right)
+ h\|a_\alpha^h - h^{\kappa-3-\gamma}a_\alpha\|_{L^\infty(0,T)}^2\,,
%Ch^{\kappa-1/2}(1 + h^{\gamma-1})\,,\quad\alpha=1,2\,.
\end{equation}
provided the following lemma holds true.
\begin{lemma}
\begin{align}\label{ineq:daw3ra}
\|h^{\kappa-3}\pa_1w_3 + r_2^h\|_{L^\infty(0,T;L^2(\Omega_h))}^2 &+ 
\|h^{\kappa-3}\pa_2w_3 - r_1^h\|_{L^\infty(0,T;L^2(\Omega_h))}^2 \\
&\qquad \leq Ch^{2\kappa - 5}\left(h^{2} + h^{\gamma} + h^{4\gamma + 4 - 2\kappa} \right)\,.
\nonumber
\end{align}
\end{lemma}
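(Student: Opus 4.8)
Write $\rho_1 := r_2^h + h^{\kappa-3}\pa_1 w_3$ and $\rho_2 := h^{\kappa-3}\pa_2 w_3 - r_1^h$; the first observation is that these are exactly the components of the \emph{bending mismatch} of the structure error, since the identities defining $\bs\ua^h$ in (\ref{def:ua_h}) together with the Griso decomposition of $\bu^h$ give, for $\alpha,\beta\in\{1,2\}$,
\begin{equation*}
\big(\sym\nabla\bs e_s^h\big)_{\alpha\beta}^{\odd} \;=\; \Big(x_3-\tfrac h2\Big)\,\big[\sym\nabla'(\rho_1,\rho_2)\big]_{\alpha\beta}\;+\;\big(\sym\nabla\tilde{\bu}^h\big)_{\alpha\beta}^{\odd}\,,
\end{equation*}
the superscript $\odd$ denoting the odd part with respect to $x_3-\tfrac h2$. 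These curvature components sit in the block of $\sym\nabla\bs e_s^h$ \emph{complementary} to the one estimated in Proposition \ref{prop:EE}, so the plan is to run a second, complementary energy estimate.

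Concretely, I would test the error equation (\ref{eq:weak_fsi_error}) with the bending structure test function $\bs\psi = \T^{-1}\big(\pa_t e_{s,1}^{\odd},\pa_t e_{s,2}^{\odd},\pa_t e_{s,3}^{\even}\big)$ --- the parity pattern complementary to the one used in Proposition \ref{prop:EE}, so that the even/odd orthogonality again turns the left-hand side into the time derivative of the energy of the block $(e_{s,1}^{\odd},e_{s,2}^{\odd},e_{s,3}^{\even})$ --- paired with $\bs\phi = \bs e_f^\eps + \bs\varphi$, where the divergence-free corrector $\bs\varphi\in L^\infty(0,T;V_F(\Omega_\eps))$ matches $\bs\psi$ on $\omega$, vanishes on $\{x_3=-\eps\}$, and (exactly as in Lemma \ref{lemma:corr_phi}, via \cite[Lemma 9]{MaPa01}, Proposition \ref{Poincare}, the Griso estimate (\ref{app:Griso_estimate}) and Corollary \ref{TimeRefEst}) satisfies $\|\nabla\bs\varphi\|_{L^\infty(0,T;L^2(\Omega_\eps))}\le C\eps^{5/2}$. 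With this choice the force term and the fluid/boundary residuals $\bs\res_f^\eps,\bs\res_b^\eps$ are handled verbatim as in the proof of Proposition \ref{prop:EE} (using (\ref{ineq:Residual}), the a priori bounds and the Griso decomposition of $\left.\bs e_f^\eps\right|_\omega$), producing a $\T\eps^3h^\gamma$ contribution. The genuinely new ingredient is the structure residual $\langle\bs\res_s^h,\bs\psi\rangle$: contrary to Proposition \ref{prop:EE}, where it was almost completely annihilated, with the bending test function it is driven by the odd part $-h^{\kappa-3}(x_3-\tfrac h2)\pa_t\nabla'w_3$ of $\pa_t\bs\ua^h$ and by the $x_3$-averaged term $h^{\kappa-3}\pa_tw_3$; I would split $\pa_t\bs\ua^h$ and $\diver\bs\ua^h$ into even and odd parts, integrate by parts in time to replace $\pa_{tt}$ by $\pa_t$, absorb the resulting $\pa_t$-terms into the left-hand side, and use the smoothness of $w_3$, arriving after a Gr\"onwall argument at
\begin{equation*}
\Big\|\Big(x_3-\tfrac h2\Big)\sym\nabla'(\rho_1,\rho_2)\Big\|_{L^\infty(0,T;L^2(\Omega_h))}^2 \;\le\; C\T\eps^3\big(h^\gamma + h^{4\gamma+4-2\kappa}\big) \;+\; Ch^{2\kappa-3}\,,
\end{equation*}
the last term stemming from the warping, bounded by (\ref{app:Griso_estimate}) and (\ref{ineq:energEH}).

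From here, dividing out the weight $\int_0^h(x_3-\tfrac h2)^2\,\dd x_3 = h^3/12$ turns this into a bound for $\|\sym\nabla'(\rho_1,\rho_2)\|_{L^\infty(0,T;L^2(\omega))}^2$ with an extra factor $h^{-3}$; together with $1/\mu^h\sim h^{\kappa}$ and $\T\eps^3=h^{\kappa-3}$ this accounts for the prefactor $h^{2\kappa-5}$ once one returns (cost $h$) to the $L^2(\Omega_h)$-norm. Korn's inequality on the periodic cell $\omega$ --- on which the only rigid motions are constants --- upgrades the gradient bound to one for $\|(\rho_1,\rho_2)-\overline{(\rho_1,\rho_2)}\|_{H^1(\omega)}^2$ with the same right-hand side. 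It then remains to bound the means; since $w_3$ is periodic, $\overline{(\rho_1,\rho_2)}=(\overline{r_2^h},-\overline{r_1^h})$, and I would control these mean rotations by averaging the shear strain $[\sym\nabla\bu^h]_{\alpha3}$ over $\Omega_h$: periodicity kills $\int_{\Omega_h}\pa_\alpha u_3^h$, while the relative displacement of the top and bottom faces of $\Omega_h$ is $O(h^{\kappa-1})$ because $\big|\int_{\Omega_h}[\sym\nabla\bu^h]_{\alpha3}\,\dd\bx\big|\le Ch^{1/2}\|\sym\nabla\bu^h\|_{L^2(\Omega_h)}\le Ch^{\kappa-1}$, whence the Griso decomposition gives $|\overline{r_\alpha^h}|\le Ch^{\kappa-2}$, a contribution absorbed in the $h^2$ term. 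Collecting the powers of $h$ (with $\T=h^\tau$, $\eps=h^\gamma$, $\tau=\kappa-3\gamma-3$) then yields exactly $Ch^{2\kappa-5}(h^2+h^\gamma+h^{4\gamma+4-2\kappa})$.

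I expect the main obstacle to be the structure residual in the complementary energy estimate: unlike the even test function of Proposition \ref{prop:EE}, the bending test function does not kill $\langle\bs\res_s^h,\bs\psi\rangle$, so one must carefully use the even/odd orthogonality, integrate by parts in time and absorb the $\pa_t$-terms, all while keeping track of the $h^{-3}$ factor lost when descending from the $(x_3-\tfrac h2)$-weighted norm on $\Omega_h$ to the plain norm on $\omega$ --- it is this interplay that fixes the sharp power $h^{2\kappa-5}$. A secondary point is the control of the mean rotations $\overline{r_\alpha^h}$, which does not follow from the elastic energy alone and must be obtained from the rigid-displacement (shear-average) argument.
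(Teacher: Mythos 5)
Your plan extracts $\rho_\alpha=r_2^h+h^{\kappa-3}\pa_1w_3$ (and the analogue for $\alpha=2$) from the \emph{in-plane} strain of the bending block, i.e.\ from $(\sym\nabla\bs e_s)_{\alpha\beta}^{\odd}=(x_3-\tfrac h2)[\sym\nabla'(\rho_1,\rho_2)]_{\alpha\beta}+(\sym\nabla'\tilde\bu^h)_{\alpha\beta}^{\odd}$. The paper instead reads $\rho_\alpha$ off the \emph{shear} component $(\sym\nabla\bs e_s^{\odd})_{\alpha3}=\tfrac12(\rho_\alpha+\pa_\alpha\tilde u_3^\odd+\pa_3\tilde u_\alpha^\odd)$, where it appears \emph{without} the $(x_3-\tfrac h2)$ weight, and then simply subtracts the warping (bounded through (\ref{app:Griso_estimate}) and (\ref{ineq:energEH})) by the triangle inequality. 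This difference is not cosmetic: it is where your route breaks.

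The problem is the factor you call ``an extra factor $h^{-3}$''. Passing from $\|(x_3-\tfrac h2)\sym\nabla'(\rho_1,\rho_2)\|_{L^2(\Omega_h)}^2$ to $\|\sym\nabla'(\rho_1,\rho_2)\|_{L^2(\omega)}^2$ (and back to $L^2(\Omega_h)$ at cost $h$) multiplies the energy bound by $12/h^2$. To still land on $Ch^{2\kappa-5}(h^2+h^\gamma+h^{4\gamma+4-2\kappa})$ you would therefore need the complementary energy estimate to be a factor $h^2$ \emph{better} than the one actually available. But it is not: precisely because the structure residual $\langle\bs\res_s^h,\bs\psi\rangle$ is \emph{not} annihilated by the bending test function (you correctly flag this as the new ingredient), the complementary estimate in the paper only yields $\mu^h\|\sym\nabla\bs e_s^{\odd}\|_{L^\infty L^2(\Omega_h)}^2\leq C\T(\eps^3+h^{2\kappa-4})$, i.e.\ $\|\sym\nabla\bs e_s^{\odd}\|^2\leq Ch^{2\kappa-3}+Ch^{4\kappa-3\gamma-7}$ — with the bare $\T\eps^3$ and no extra $h^\gamma$ gain. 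Your intermediate display claims $C\T\eps^3(h^\gamma+h^{4\gamma+4-2\kappa})$, i.e.\ the same $h^\gamma$-improved rate as in Proposition~\ref{prop:EE}; that is exactly what cannot be proved once the $(o,o,e)$ residual contributes. Plugging the true bound into your pipeline gives $\|\rho_\alpha\|_{L^2(\Omega_h)}^2\lesssim h^{-2}\big(h^{2\kappa-3}+h^{4\kappa-3\gamma-7}\big)+\text{(mean rotations)}=Ch^{2\kappa-5}+\cdots$, which is $h^{-2}$ worse than the statement. So the gap is structural, not a matter of bookkeeping.

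Two secondary remarks. First, the warping term $Ch^{2\kappa-3}$ in your intermediate bound should not be there at all: by the very definition of the Griso decomposition, $\int_0^h(x_3-\tfrac h2)\tilde u_\alpha^h\,\dd x_3=0$, so projecting onto $(x_3-\tfrac h2)$ annihilates $\sym\nabla'\tilde\bu^\odd$ exactly; keeping it and then dividing by $h^3/12$ produces a spurious $h^{2\kappa-5}$ term on top of everything else. Second, your control of the mean rotations $\overline{r_\alpha^h}$ by averaging the shear strain over $\Omega_h$ is a sound idea and gives $|\overline{r_\alpha^h}|\leq Ch^{\kappa-2}$, which lands cleanly in the $h^2$ slot; that part of the argument is fine, but it does not repair the loss above. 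The moral is that $\rho_\alpha$ lives at order one in the shear component and at order $h$ in the bending moment; to reach the sharp rate one must read it from the former, as the paper does.
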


\begin{proof}
In order to prove (\ref{ineq:daw3ra}) we first employ the test function on the structure part
$\bs\psi = \T^{-1}\pa_t\bs e_s^{\odd}$ in (\ref{eq:weak_fsi_error}). For the fluid part 
we take the test function
\begin{equation*}
\bs\phi = \bs e_f^\eps + \bs\varphi\,,
\end{equation*}
where the correction $\bs\varphi$ satisfies
\begin{align*}
\diver\bs\varphi &= 0\quad\text{on }\ \Omega_\eps\times(0,T)\,,\\ 
\varphi_\alpha|_\omega &= \T^{-1}\left.\left(\frac{h^{\kappa-2}}{2}\pa_\alpha w_3 - \pa_te_{s,\alpha}^\even\right)\right|_\omega\,,\ 
\left.\phi_3\right|_\omega = -\left.\T^{-1}\pa_te_{s,3}^\even\right|_\omega\,,
%\bs\varphi|_{\{x_3=-\eps\}} &= 0\,,
\end{align*}
$\left.\bs\varphi\right|_{\{x_3=-\eps\}} = 0$ and $\bs\varphi$ is periodic on the lateral boundaries. 
The estimate on $\bs\varphi$ now reads
\begin{equation*}%\label{ineq:test_corr_bound2}
\|\nabla\bs\varphi\|_{L^\infty(0,T;L^2(\Omega_\eps))}^2 
\leq C\T^{-2}\left(\frac{1}{\eps}\left\|\frac{h^{\kappa-2}}{2}\pa_\alpha w_3 
- \pa_te_{s,\alpha}^\even\right\|_{L^\infty(0,T;L^2(\omega))}^2 + 
\|\pa_te_{s,3}^\even\|_{L^\infty(0,T;L^2(\omega))}^2\right)\,.
%\quad\text{\textcolor{blue}{for all } }t\in(0,T)\,,
%\left(\|\pa_tu^\odd_1\|_{L^2(\omega)} + \|\pa_tu^\odd_2\|_{L^2(\omega)}\right)\,,
\end{equation*}
The last term is already estimated above with $O(\eps^6)$. Therefore, 
using the trace and Korn inequalities on thin domains together with estimates
(\ref{ineq:error_est2}) and (\ref{ineq:energEHH}) we have
\begin{align*}
\|\nabla\bs\varphi\|_{L^\infty(0,T;L^2(\Omega_\eps))}^2 
% &\leq C\T^{-2}\left(h^{2\kappa-4-\gamma} + 
% \frac{1}{\eps h}\|\pa_te_{s,\alpha}^\even\|_{L^\infty(0,T;L^2(\Omega_h))}^2\right. \\
% &\qquad + \left. \frac{h}{\eps}\|\nabla\pa_te_{s,\alpha}^\even\|_{L^\infty(0,T;L^2(\Omega_h))}^2 \right) + C\eps^6\\
&\leq C\left(h^{5\gamma+2} + \frac{\T \eps^4}{\eps h}h^\kappa 
+ \frac{\T^{-2}}{\eps h}\|\sym\nabla\pa_t\bu^h\|_{L^\infty(0,T;L^2(\Omega_h))}^2 + h^{6\gamma} \right)\\
&\leq C\left(h^{5\gamma+2} + h^{2\kappa-4} + h^{6\gamma} \right)\leq C(\eps^5 + h^{2\kappa-4})\,. 
\end{align*}
Using the above test functions in the weak form for the errors (\ref{eq:weak_fsi_error}) and estimating
like in (\ref{ineq:error_est1}) we find
\begin{align}
\frac{\vro_f}{4}\int_{\Omega_\eps}|\bs e_f^\eps(t)|^2\dd\bx &+ \nonumber
\frac{\eta\T}{2}\int_0^t\!\!\int_{\Omega_\eps}|\sym\nabla\bs e_f^\eps|^2\,\dd \bx\dd s
+ \frac{\vro_s^h\T^{-2}}{4}\int_{\Omega_h}\left|\pa_t \bs e_{s}^\odd(t)\right|^2\dd\bx \\ 
&\quad + \int_{\Omega_h}\left(\mu^h\left|\sym\nabla \bs e_{s}^\odd(t)\right|^2
+ \frac{\lambda^h}{2}\left|\diver\bs e_{s}^\odd(t)\right|^2 
\right)\,\dd \bx \nonumber\\
&\quad \leq C\T\left(\eps^4 + h^{2\kappa-4} + \eps^{3/2}(\eps^5 + h^{2\kappa-4})^{1/2}\right)\label{ineq:error_est3} \\ \nonumber
&\qquad + C\T\eps^3 + \frac{\lambda^h}{4}\int_{\Omega_h}\left|\diver\bs e_{s}^\odd(t)\right|^2\dd\bx
 + \frac{\lambda^h}{2}\int_0^t\!\!\int_{\Omega_h}\left|\diver\bs e_{s}^\odd\right|^2\dd\bx\dd s\,.\nonumber
\end{align}
The last line of the above inequality arises from estimating the structure residual term
$\T^{-1}\langle\bs\res_s^h,\pa_t\bs e_s^{\odd}\rangle$. 
Therefore, employing the Gr\"onwall lemma we close the estimate (\ref{ineq:error_est3})
with
\begin{align}
\frac{\vro_f}{4}\int_{\Omega_\eps}|\bs e_f^\eps(t)|^2\dd\bx &+ \nonumber
\frac{\eta\T}{2}\int_0^t\!\!\int_{\Omega_\eps}|\sym\nabla\bs e_f^\eps|^2\,\dd \bx\dd s
+ \frac{\vro_s^h\T^{-2}}{4}\int_{\Omega_h}\left|\pa_t \bs e_{s}^\odd(t)\right|^2\dd\bx \\ 
&\quad + \int_{\Omega_h}\left(\mu^h\left|\sym\nabla \bs e_{s}^\odd(t)\right|^2
+ \frac{\lambda^h}{4}\left|\diver\bs e_{s}^\odd(t)\right|^2 
\right)\,\dd \bx \leq C\T\left(\eps^3 + h^{2\kappa-4}\right)\,.
\end{align}
Having this at hand, we conclude
\begin{equation*}
\int_{\Omega_h}(\pa_\alpha e_{s,3}^\odd + \pa_3e_{s,\alpha}^\odd)^2\dd \bx 
\leq Ch^{2\kappa-5}\left(h^2 + h^{2\kappa - 3\gamma - 2}\right)\,,
\quad\alpha = 1,2\,,
\end{equation*}
which is equivalent to
\begin{align*}
\int_{\Omega_h}\left(\left(r_2^h + h^{\kappa-3}\pa_1w_3 + \pa_3\tilde u_1^\odd + \pa_1\tilde u_3^\odd \right)^2
+ \left(-r_1^h + h^{\kappa-3}\pa_2w_3 + \pa_3\tilde u_2^\odd + \pa_2\tilde u_3^\odd \right)^2\right)\dd\bx
\\
\leq Ch^{2\kappa-5}\left(h^2 + h^{2\kappa - 3\gamma - 2}\right)\,.
\end{align*}
From the basic Griso inequality and a priori estimate we have
\begin{align*}
\int_{\Omega_h}\left(\left(\pa_3\tilde u_1^\odd + \pa_1\tilde u_3^\odd \right)^2
+ \left(\pa_3\tilde u_2^\odd + \pa_2\tilde u_3^\odd \right)^2\right)\dd\bx
&\leq \|\nabla\tilde{\bu}^h\|_{L^\infty(0,T;L^2(\Omega_h))}^2\\
&\leq \|\sym\nabla{\bu}^h\|_{L^\infty(0,T;L^2(\Omega_h))}^2
\leq Ch^{2\kappa-3}\,.
\end{align*}
One immediately sees that for $\kappa\geq \max\{2\gamma+1, \frac74\gamma+\frac32\}$ it holds 
$h^{2\kappa-3\gamma-2} \leq h^\gamma + h^{4\gamma+4-2\kappa}$.
Thus, using the triangle inequality we conclude the desired estimate (\ref{ineq:daw3ra}).
% \begin{equation*}
% \|h^{\kappa-3}\pa_1w_3 + r_2^h\|_{L^\infty(0,T;L^2(\Omega_h))}^2 + 
% \|h^{\kappa-3}\pa_2w_3 - r_1^h\|_{L^\infty(0,T;L^2(\Omega_h))}^2 \leq 
% Ch^{2\kappa - 5}\left(h^{2} + h^{\gamma} + h^{4\gamma + 4 - 2\kappa} \right)\,.
% \end{equation*}
\end{proof}
Recall again the Griso estimate for the elementary plate displacement, we have
\begin{equation*}
\int_{\Omega_h}\left((\pa_1w^h_3 + r_2^h)^2 + (\pa_2w^h_3 - r_1^h)^2 \right)\dd\bx \leq Ch^{2\kappa-3}\,,
\end{equation*}
which combined with (\ref{ineq:daw3ra}) implies
\begin{align*}
\|\pa_1w_3^h - h^{\kappa-3}\pa_1w_3\|_{L^\infty(0,T;L^2(\Omega_h))}^2 + 
\|\pa_2w_3^h - h^{\kappa-3}\pa_2w_3\|_{L^\infty(0,T;L^2(\Omega_h))}^2 \\
\leq 
Ch^{2\kappa - 5}\left(h^{2} + h^{\gamma} + h^{4\gamma + 4 - 2\kappa} \right)\,.
\end{align*}
The Poincar\'e inequality gives
\begin{equation*}
\|w_3^h - h^{\kappa-3}w_3\|_{L^\infty(0,T;L^2(\Omega_h))}^2 \leq Ch^{2\kappa-4} + 
Ch^{2\kappa - 5}\left(h^{2} + h^{\gamma} + h^{4\gamma + 4 - 2\kappa} \right)\,,
\end{equation*}
which eventually provides %the error estimate for the vertical displacement.
\begin{equation}
\|e_{s,3}^h\|_{L^\infty(0,T;L^2(\Omega_h))}^2 \leq 
Ch^{2\kappa - 5}\left(h^1 + h^{\gamma} + h^{4\gamma + 4 - 2\kappa} \right)\,.
\end{equation}
%{\color{red}

\subsection{Error estimate for the pressure} 
Finally we prove the error estimate for the pressure. We define $e_p^{\eps}=p^{\eps}-\paa^{\eps}$. 
Similarly as in the a priori pressure estimate, the error estimate will be performed in two steps. 
In the first step we estimate zero mean value part of the error $e_p^{\eps}$. 
This is classical and follows directly from the error estimates for the fluid velocity. 

The second step is specific for our problem and is related to the fact that the pressure is unique. 
%due to the fact that elastic wall can be deformed by the pressure force. 
Let us denote by $\pi_e^{\eps}(t)=\int_{\Omega_{\eps}}e_p^{\eps}(t,.)$ the mean value of the pressure error. 
The test function $\bs\phi\in \mathcal{V}_F(0,T;\Omega_{\eps})$ is constructed such 
that $\diver\bs\phi(t,.)=e_p^{\eps}(t,.)-\pi_e^{\eps}(t)$ and $\bs\phi$ vanishes on the interface.
This can be done in a standard way by using the Bogovskij construction, 
see e.g.~\cite[Section 3.3]{galdi2011introduction}. 
Moreover, the following estimates hold (see e.g.~\cite[Lemma 9]{MaPa01}):
\begin{align*}
\|\bs\phi\|_{L^2(0,T;H^1(\Omega_{\eps})} &\leq \frac{C}{\eps}\|e_p^{\eps}-\pi_e^{\eps}\|_{L^2(0,T;L^2(\Omega_{\eps})}\,,\\
\|\phi_3\|_{L^2(0,T;L^2(\Omega_{\eps})} &\leq C\|e_p^{\eps}-\pi_e^{\eps}\|_{L^2(0,T;L^2(\Omega_{\eps})}\,.
\end{align*}
By construction $(\bs\phi,0)$ is and admissible test function for error formulation and therefore 
we get the following estimate:
\begin{align*}
\int_0^T\!\!\int_{\Omega_{\eps}}(e_p^{\eps}-\pi_e^{\eps})^2\dd \bx\dd s
&=\left |\int_0^T\!\!\int_{\Omega_{\eps}}e_p^{\eps}\diver\bs\phi\, \dd \bx\dd s\right|\\
&=\left |\int_0^T\!\!\int_{\Omega_{\eps}}\Big (\T^{-1}\vro_f\partial_t\bs e_f^\eps - 
\bs r_f^{\eps}+f_3^{\eps}\bs e_3\Big )\cdot\bs\phi\, \dd \bx\dd s \right .
\\
&\qquad
\left .
-2\eta \int_0^T\!\!\int_{\Omega_{\eps}}\sym\nabla\bs e_f^\eps:\sym\nabla\bs\phi\,\dd \bx\dd s
\right |\\
&\leq C\eps^{1/2}\left(h^{3\gamma/2 + 3/2-\kappa/2} + \eps + h^{\min\{\gamma/2,(2\gamma-\kappa+2)_+\}}\right)\|e_p^{\eps}-\pi_e^{\eps}\|_{L^2(0,T;L^2(\Omega_{\eps})}.
\end{align*}
Here we used the higher-order energy estimate of Corollary \ref{TimeRefEst} 
to control the time derivatives, definition of the fluid residual term $\res_f^\eps$ and the estimate
of Proposition \ref{prop:EE}. 

To estimate the mean value term 
$\pi_e^{\eps}$ we follow the same steps as in the proof of estimate \eqref{ineq:pressure} 
with $\zeta=\pi_e^{\eps}$.
%We can obtain this improvement by using the regularity result of Corollary \ref{TimeRefEst}. 
However, we do not gain anything in comparison to the a priori estimates because we have not 
derived higher-order estimates for $\partial_{tt}^2\bs e_s^h$. 
Therefore, for $\kappa\geq2\gamma+1$ we proved the following 
error estimates for the pressure:
\begin{equation}\label{error:pressure}
\|p^{\eps}-\paa^{\eps}\|_{L^2(0,T;L^2(\Omega_{\eps})} \leq 
C\eps^{1/2}h^{\min\{\gamma/2,2\gamma-\kappa+2\}} + C\eps h^{-1-\tau}
\leq C\eps^{1/2}h^{\min\{\gamma/2,(2\gamma-\kappa+2)_+\}}\,.
\end{equation}
This finishes the proof of Theorem \ref{tm:EE}.

\appendix

\section{Griso decomposition and Korn inequality on thin domain}\label{app:A}
The following result is directly from \cite[Theorem 2.3]{Gri05}, tailored to the specific 
boundary conditions and geometry considered in this paper.
\begin{theorem}
Let $h>0$, then every $\bu^h\in V_S(\Omega_h)$ can be decomposed as
\begin{equation}\label{app:Griso_dec}
\bu^h(x) = \bs w^h(x') + (x_3 - h/2)\bs e_3\times\bs r^h(x') + \tilde\bu^h(x)\,,\quad (x',x_3)\in\Omega_h\,,
\end{equation}
or written componentwise
\begin{align*}
u_1^h(x) &= w_1^h(x') + (x_3-h/2)r_2^h(x') + \tilde u_1^h(x)\,,\\
u_2^h(x) &= w_2^h(x') - (x_3-h/2)r_1^h(x') + \tilde u_2^h(x)\,,\\
u_3^h(x) &= w_3^h(x') + \tilde u_3^h(x)\,,
\end{align*}
where 
\begin{align*}
\bs w^h(x') = \frac{1}{h}\int_0^h \bu^h(x',x_3)\dd x_3\,,\quad
 \bs r^h(x') = \frac{3}{h^3}\int_0^h (x_3 - h/2)\bs e_3\times\bu^h(x',x_3)\dd x_3\,,
\end{align*}
and $\tilde\bu^h\in V_S(\Omega_h)$ is so called \emph{warping} or residual term.
The main part of the decomposition, 
denoted by $\bu_E^h = \bs w^h(x') + (x_3 - h/2)\bs e_3\times\bs r^h(x')$,
is called the \emph{elementary plate displacement}. 
Moreover, the following estimate holds
\begin{equation}\label{app:Griso_estimate}
\|\sym\nabla \bu_E^h\|_{L^2(\Omega_h)}^2
 + \|\nabla\tilde{\bu}^h\|_{L^2(\Omega_h)}^2
 + \frac{1}{h^2}\|\tilde{\bu}^h\|_{L^2(\Omega_h)}^2
 \leq C\|\sym\nabla{\bu}^h\|_{L^2(\Omega_h)}^2\,,
\end{equation}
where $C>0$ is independent of $\bu^h$ and $h$.
\end{theorem}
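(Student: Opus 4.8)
The decomposition identity is a definitional matter, and the plan is to arrange it exactly so. First I would \emph{define} $\bs w^h$ and $\bs r^h$ by the averaging formulas in the statement and set $\tilde\bu^h := \bu^h - \bu_E^h$, where $\bu_E^h$ is the elementary plate displacement written componentwise as in the statement; then (\ref{app:Griso_dec}) holds tautologically. Because $\bu^h\in H^1(\Omega_h)$, its transverse mean and first moment in $x_3$ lie in $H^1(\omega)$, so $\bs w^h,\bs r^h\in H^1(\omega)$ and $\bu_E^h\in H^1(\Omega_h)$; both operations also preserve $\omega$-periodicity, hence $\tilde\bu^h\in V_S(\Omega_h)$. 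Finally, since $\bs w^h$ is the transverse average of $\bu^h$ and the term proportional to $(x_3-h/2)$ has zero transverse average, I would record the identity
\begin{equation*}
\int_0^h \tilde\bu^h(x',x_3)\,\dd x_3 = 0 \qquad\text{for a.e.\ } x'\in\omega\,,
\end{equation*}
which is the only orthogonality property I will need below.

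For estimate (\ref{app:Griso_estimate}) the plan is to follow Griso. Since no boundary condition enters it, the estimate is \cite[Theorem 2.3]{Gri05} applied on the $\omega$-periodic subspace, with the same constant; let me outline the route. The whole bound reduces to the single inequality $\|\nabla\tilde\bu^h\|_{L^2(\Omega_h)} \le C\|\sym\nabla\bu^h\|_{L^2(\Omega_h)}$: granting it, the last term of (\ref{app:Griso_estimate}) follows because the zero-transverse-mean property of $\tilde\bu^h$ gives, via a one-dimensional Poincar\'e--Wirtinger inequality applied for a.e.\ $x'$ and integrated over $\omega$,
\begin{equation*}
\|\tilde\bu^h\|_{L^2(\Omega_h)} \le C\,h\,\|\pa_3\tilde\bu^h\|_{L^2(\Omega_h)} \le C\,h\,\|\nabla\tilde\bu^h\|_{L^2(\Omega_h)}\,,
\end{equation*}
and the first term follows from $\sym\nabla\bu_E^h = \sym\nabla\bu^h - \sym\nabla\tilde\bu^h$ and the triangle inequality. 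To prove the key inequality I would rescale $x_3 = h z_3$ to the fixed slab $\omega\times(0,1)$ and exploit that $\bu_E^h$ is built precisely to absorb the part of $\bu^h$ that is affine in $x_3$: differentiating the definitions of $\bs w^h,\bs r^h$ under the integral sign expresses the symmetric combinations of $\nabla'\bs w^h$ and $\nabla'\bs r^h$ in terms of transverse averages of the in-plane components of $\sym\nabla\bu^h$ (with explicit powers of $h$), a two-dimensional Korn inequality on the torus $\omega$ (available with a constant independent of $h$, periodicity being harmless since only the zero modes of $\sym\nabla'$ must be removed) upgrades these to full-gradient bounds on $\bs w^h,\bs r^h$, and a Korn/Poincar\'e estimate for the warping in the transverse variable then closes the loop.

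The hard part is making this work with all constants independent of $h$: the plain second Korn inequality on $\Omega_h$ has a constant that degenerates as $h\downarrow0$, and the only reason the estimate survives is that subtracting $\bu_E^h$ removes not merely the infinitesimal rigid displacements but the entire membrane-plus-bending (affine-in-$x_3$) subspace. The genuinely delicate point --- which is the content of Griso's proof and which I would reproduce essentially verbatim --- is the bookkeeping of the $h$-powers carried by $\bs w^h$, $\bs r^h$ and $\tilde\bu^h$ through the rescaling and through the chain of Korn and Poincar\'e inequalities, so that the final constant depends on $\omega$ alone. Once this accounting is in place, (\ref{app:Griso_estimate}) and hence the theorem follow.
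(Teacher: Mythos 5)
Your proposal is correct and takes essentially the same route as the paper: the paper gives no proof of this theorem, stating only that it is ``directly from \cite[Theorem 2.3]{Gri05},'' and you likewise defer the substantive estimate to Griso while usefully observing that the decomposition itself is definitional and that the whole bound reduces to the single inequality $\|\nabla\tilde\bu^h\|_{L^2(\Omega_h)}\le C\|\sym\nabla\bu^h\|_{L^2(\Omega_h)}$ via the zero-transverse-mean property of $\tilde\bu^h$, the one-dimensional Poincar\'e--Wirtinger inequality, and the triangle inequality. Both accounts ultimately rest on Griso's original argument for the $h$-uniformity of the constant.
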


\begin{theorem}[Korn inequality on thin domains]\label{app:thinKorn}
Let $\omega\subset\R^2$ be Lipschitz domain and $\gamma\subset\pa\omega$ part of its boundary
of positive measure, then there exists a constant $C_K>0$ and $h_0>0$ such that for every $0<h<h_0$
\begin{equation*}
\|(\psi_1,\psi_2,h\psi_3)\|_{H^1(\Omega;\R^3)}^2 \leq C_K\Big (\|(\psi_1,\psi_2,h\psi_3)\|_{L^2(\Omega;\R^3)}^2+\|\sym\nabla_h\bs\psi\|_{L^2(\Omega;\R^9)}^2\Big )\,,
\quad\forall\bs\psi\in H^1(\Omega;\R^3)\,,
\end{equation*}
where $\Omega = \omega\times (0,1)$. The Korn constant
$C_K$ depends only on $\omega$ and $\gamma$.
\end{theorem}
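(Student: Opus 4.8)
The plan is to deduce this from the classical (second) Korn inequality on the \emph{fixed} Lipschitz domain $\Omega=\omega\times(0,1)$, exploiting the fact that the anisotropic combination $(\psi_1,\psi_2,h\psi_3)$ together with the scaled gradient $\nabla_h$ is arranged precisely so that no $h$-dependent deterioration of the constant occurs. First I would introduce $\bs\phi:=(\psi_1,\psi_2,h\psi_3)$ and $R_h:=\diag(1,1,h)$, and check by a direct componentwise computation that
\begin{equation*}
\nabla(\psi_1,\psi_2,h\psi_3)=R_h\,(\nabla_h\bs\psi)\,R_h\,,\qquad\text{hence}\qquad \sym\nabla(\psi_1,\psi_2,h\psi_3)=R_h\,(\sym\nabla_h\bs\psi)\,R_h
\end{equation*}
since $R_h$ is symmetric. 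In plain terms: the in-plane $2\times2$ block of $\sym\nabla\bs\phi$ coincides with that of $\sym\nabla_h\bs\psi$, the mixed entries of $\sym\nabla\bs\phi$ equal $h$ times those of $\sym\nabla_h\bs\psi$, and the $(3,3)$ entry equals $h^{2}$ times that of $\sym\nabla_h\bs\psi$.

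Next I would fix $h_0=1$ and record the elementary pointwise consequence: for $0<h\le1$ one has $\|R_h\|_{\mathrm{op}}=1$, so $|\sym\nabla\bs\phi|\le|\sym\nabla_h\bs\psi|$ pointwise in the Frobenius norm, and therefore
\begin{equation*}
\big\|\sym\nabla(\psi_1,\psi_2,h\psi_3)\big\|_{L^2(\Omega;\R^9)}\le\big\|\sym\nabla_h\bs\psi\big\|_{L^2(\Omega;\R^9)}\,.
\end{equation*}
Then I would apply the standard second Korn inequality on the bounded Lipschitz domain $\Omega\subset\R^3$ (legitimate because $\omega$, hence $\Omega=\omega\times(0,1)$, is Lipschitz), whose constant $C_\Omega$ depends only on $\Omega$, i.e.\ only on $\omega$: for every $\bs\phi\in H^1(\Omega;\R^3)$,
\begin{equation*}
\|\bs\phi\|_{H^1(\Omega;\R^3)}^2\le C_\Omega\big(\|\bs\phi\|_{L^2(\Omega;\R^3)}^2+\|\sym\nabla\bs\phi\|_{L^2(\Omega;\R^9)}^2\big)\,.
\end{equation*}
Applying this to $\bs\phi=(\psi_1,\psi_2,h\psi_3)$ and inserting the previous bound on $\|\sym\nabla\bs\phi\|_{L^2}$ gives the claim with $C_K=C_\Omega$ for all $0<h<h_0=1$. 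The $\gamma$-dependence stated in the theorem is inessential for this form of the estimate; it would enter only if one additionally imposed a homogeneous Dirichlet condition on $\gamma\times(0,1)$, in which case the $L^2$-term on the right is absorbed via the Poincar\'e inequality (this is exactly how Theorem~\ref{app:thinKorn} is used in the proof of \eqref{2.ineq:eps_K}, combined there with the condition $\bv^\eps=0$ on $\{x_3=-\eps\}$).

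There is no genuine obstacle here; the single point worth isolating is the observation in the first step. One must \emph{not} attempt to bound the scaled gradient $\nabla_h\bs\psi$ itself, whose off-diagonal part $\tfrac1h\partial_3\psi_\alpha$ is truly \emph{not} controlled by $\sym\nabla_h\bs\psi$ uniformly in $h$ (only the sum $\partial_\alpha\psi_3+\tfrac1h\partial_3\psi_\alpha$ is). Instead one bounds the ordinary gradient of the rescaled field $(\psi_1,\psi_2,h\psi_3)$ — which is precisely the quantity the statement asks for, and which conjugates $\sym\nabla_h$ back to the $h$-independent operator $\sym\nabla$ on the reference domain $\Omega$.
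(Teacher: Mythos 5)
Your proof is correct, and it takes a genuinely different route from the paper's. The paper's one-line proof invokes the Griso decomposition (\ref{app:Griso_dec}) of $\bs\psi$ into an elementary plate displacement plus a warping term and then applies a two-dimensional Korn inequality on $\omega$ to the elementary-plate part, whereas your argument bypasses the decomposition entirely. The conjugation identity $\sym\nabla(\psi_1,\psi_2,h\psi_3)=R_h(\sym\nabla_h\bs\psi)R_h$ with $R_h=\diag(1,1,h)$, together with $\|R_h\|_{\mathrm{op}}\le 1$ for $0<h\le 1$, gives the pointwise Frobenius bound $|\sym\nabla(\psi_1,\psi_2,h\psi_3)|\le|\sym\nabla_h\bs\psi|$ and reduces everything to a single application of the classical second Korn inequality on the fixed three-dimensional Lipschitz cylinder $\Omega=\omega\times(0,1)$, with the resulting constant manifestly $h$-independent. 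This is more elementary and self-contained, and it makes transparent, as you note, that for this version of the statement the constant does not actually depend on $\gamma$: the set $\gamma$ matters only when a homogeneous Dirichlet condition is imposed there and a Poincar\'e-type argument is used to absorb the $L^2$-term, which is exactly how the theorem is applied in deriving (\ref{2.ineq:eps_K}). What the Griso-based route buys, by contrast, is structural information about $\bs\psi$ --- the elementary-plate/warping split with its $h$-uniform estimate (\ref{app:Griso_estimate}) --- which the paper needs in any case for the limit identification in Section~\ref{sec:reduced}; for the bare inequality of Theorem~\ref{app:thinKorn}, however, your shorter reduction is sufficient and arguably cleaner.
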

\begin{proof}
The proof follows by the Griso's decomposition of $\bs\psi\in H_\gamma^1(\Omega;\R^3)$ 
(see \cite{Gri05}) and application of the Korn inequality for functions defined on
$\omega$.
\end{proof}

\end{document}